\documentclass[a4paper,12pt]{amsart}

\usepackage{amssymb}
\usepackage{latexsym}
\usepackage{amsmath}
\usepackage{amscd}
\usepackage{graphicx}
\usepackage{palatino}
\usepackage{xcolor}
\usepackage{amsrefs}

\title[Real analytic direct summand]
{
Real analytic lift of\\
 foliations of Thurston and Tsuboi
}

\author{Teruaki Kitano}
\address{Department of information Systems Science, 
Faculty of Science and Engineering,
Soka University, 
Tangi-machi 1-236, 
Hachioji, Tokyo, 192-8577, Japan}
\email{kitano@soka.ac.jp}

\author{Yoshihiko Mitsumatsu}
\address{Department of Mathematics, Chuo University, 
1-13-27 Kasuga Bunkyo-ku, 
Tokyo, 112-8551, Japan}
\email{yoshi@math.chuo-u.ac.jp}

\author{Shigeyuki Morita}
\address{Graduate School of Mathematical Sciences, 
The University of Tokyo, 
3-8-1 Komaba, 
Meguro-ku, Tokyo, 153-8914, Japan}
\email{shmori314@gmail.com}

\subjclass[2020]{Primary ~55R40, 57R20, 57R32, 57R50, 58D05
}
\keywords{foliation, 
Haefliger's classifying space, real analytic $\Gamma_1$-structure,
Godbillon-Vey class
}

\newtheorem{thm}{Theorem}[section]
\newtheorem{prop}[thm]{Proposition}
\newtheorem{lem}[thm]{Lemma}
\newtheorem{cor}[thm]{Corollary}
\theoremstyle{definition}
\newtheorem{definition}[thm]{Definition}

\newtheorem{remark}[thm]{Remark}
\newtheorem{problem}[thm]{Problem}
\newtheorem{question}[thm]{Question}

\begin{document}

\newcommand{\Mg}{\mathcal{M}_g}
\newcommand{\Mgp}{\mathcal{M}_{g,\ast}}
\newcommand{\Mgb}{\mathcal{M}_{g,1}}

\newcommand{\hg}{\mathfrak{h}_{g,1}}
\newcommand{\ag}{\mathfrak{a}_g}
\newcommand{\Ln}{\mathcal{L}_n}

\newcommand{\Sg}{\Sigma_g}
\newcommand{\Sgb}{\Sigma_{g,1}}
\newcommand{\la}{\lambda}

\newcommand{\Symp}[1]{Sp(2g,\mathbb{#1})}
\newcommand{\symp}[1]{\mathfrak{sp}(2g,\mathbb{#1})}
\newcommand{\gl}[1]{\mathfrak{gl}(n,\mathbb{#1})}

\newcommand{\At}[1]{\mathcal{A}_{#1}^t (H)}
\newcommand{\Hq}{H_{\mathbb{Q}}}

\newcommand{\Ker}{\mathop{\mathrm{Ker}}\nolimits}
\newcommand{\Hom}{\mathop{\mathrm{Hom}}\nolimits}
\renewcommand{\Im}{\mathop{\mathrm{Im}}\nolimits}

\newcommand{\Der}{\mathop{\mathrm{Der}}\nolimits}
\newcommand{\Out}{\mathop{\mathrm{Out}}\nolimits}
\newcommand{\Aut}{\mathop{\mathrm{Aut}}\nolimits}
\newcommand{\Q}{\mathbb{Q}}
\newcommand{\Z}{\mathbb{Z}}
\newcommand{\R}{\mathbb{R}}
\newcommand{\C}{\mathbb{C}}

\begin{abstract}
Thurston constructed codimension one foliations on $S^3$ 
thereby proved that the homomorphism
$gv: \pi_3(B\overline{\Gamma}^\infty_1)\rightarrow \R$ induced by
the Godbillon-Vey invariant is surjective. 
By another real analytic construction,
he proved that the homomorphism
$gv: H_3(B\overline{\Gamma}^\omega_1)\rightarrow \R$ is also surjective
where
$B\overline{\Gamma}^\omega_1$ 
is a $K(\pi,1)$ space
by Haefliger.

Tsuboi
proved that the former surjection splits so that
$\pi_3(B\overline{\Gamma}^\infty_1)= \R\oplus \mathrm{Ker}\,gv$.
He further showed that the subgroup of 
$H_3(B\overline{\Gamma}^\infty_1;\Z)$ generated by
all the Thurston's constructions 
coincides with his direct summand $\R$.
In this paper, we prove that Thurston's second surjection splits and
also that the subgroup of 
$H_3(B\overline{\Gamma}^\omega_1;\Z)$ generated by
all the Thurston's cycles is equal to our direct summand
$\R$ which 
is a lift of Tsuboi's one.
To show this, we modify
the arguments of 
Thurston and Tsuboi 
by replacing Reeb components with a real analytic construction.
We prove certain {\it uniqueness} of them by showing acyclicity
of the affine group in the Haefliger group $\pi_1(B\overline{\Gamma}^\omega_1)$.
We also prove the existence of a new kind of characteristic class of foliations in
$H^4(B\overline{\Gamma}^\omega_1;\Z)$.
\end{abstract}

\renewcommand\baselinestretch{1.1}
\setlength{\baselineskip}{16pt}

\newcounter{fig}
\setcounter{fig}{0}

\maketitle


\section{Introduction}
More than $50$ years ago, in a celebrated paper \cite{thurston} Thurston constructed 
plenty of codimension one foliations on $S^3$
by making use of classical hyperbolic geometry governed by the group $\mathit{\mathit{PSL}}(2,\R)$
as well as the Reeb foliation on the solid torus. 
By this he showed that the Godbillon-Vey class is a continuously varying
characteristic class and proved a ground breaking result that the homomorphism
\begin{equation}
gv: \pi_3(B\overline{\Gamma}^\infty_1)\rightarrow \R
\label{eq:GVs}
\end{equation}
induced by the Godbillon-Vey class is surjective.
Here $B\overline{\Gamma}^\infty_1$ denotes Haefliger's classifying space for his $\overline{\Gamma}_1^\infty$ structures.

In the real analytic category, Thurston also proved that the homomorphism
\begin{equation}
gv: H_3(B\overline{\Gamma}^\omega_1;\Z)\rightarrow \R
\label{eq:GV}
\end{equation}
is surjective. The method for this was to glue his first examples as above with similar ones, associated 
to finite covering groups $\mathit{\mathit{PSL}}(2,\R)^{(n)}\ (n=2,3,\ldots)$ of $\mathit{\mathit{PSL}}(2,\R)$,
along the common boundaries which are various linear foliations on the torus, thus avoiding
the use of the Reeb component which is not real analytic. In his paper \cite{thurston}, Thurston
did not describe details of his proof. However, fortunately Brooks wrote a proof in \cite{bott}.

In \cite{tsuboi84}, Tsuboi generalized Thurston's former construction as follows.
He identified building blocks of Thurston's construction as elements of the second homology
group of $\mathit{\mathit{PSL}}(2,\R)$, considered as a discrete group, {\it relative to} its rotation 
subgroup $\mathit{PSO}(2)$. By a very nice argument, he proved that 
\begin{equation}
H_2(\mathit{\mathit{PSL}}(2,\R),\mathit{PSO}(2);\Z)\cong \R.
\label{eq:rel}
\end{equation}
By utilizing this he was able to prove that the above surjective homomorphism splits
so that
$\pi_3(B\overline{\Gamma}^\infty_1)\cong \R\oplus \mathrm{Ker}\,gv$.
Tsuboi further constructed many more $3$-cycles of $B\overline{\Gamma}^\infty_1$
by considering all the finite covering groups $\mathit{\mathit{PSL}}(2,\R)^{(n)}$ of $\mathit{\mathit{PSL}}(2,\R)$ and proving
similar isomorphisms as in \eqref{eq:rel} for all $n$.
He then proved the remarkable
result that 
the subspace of $H_3(B\overline{\Gamma}^\infty_1;\Z)$ generated by all of his cycles
obtained in this way remains the same as his
direct summand so that the Godbillon-Vey invariant is the complete invariant
for such foliations.

In this paper, we enhance the above constructions of Thurston and Tsuboi to show that
Thurston's surjective homomorphism \eqref{eq:GV} also splits.
In our construction, instead of Reeb component
to bound linear foliations on the torus, we slightly enlarge the group of parallel translations of the real line
$\R$, which is canonically isomorphic to $\R$ itself, by adding just two homothety mappings of it to obtain a group isomorphic to
$\R\rtimes \Z^2$ as a subgroup of $\pi_1(B\overline{\Gamma}^\omega_1)$.
Then we show that any linear foliation on the torus, which can be considered
as an element of $H_2(\R;\Z)$, is the boundary of a $3$-chain of the
above group $\R\rtimes \Z^2$ which is uniquely defined up to homology. 
Our construction of these $3$-chains is an
explicit and systematic one and the Godbillon-Vey class vanishes identically
on them at the chain level. By combining these properties, we 
obtain the desired direct summand $\R\subset H_3(B\overline{\Gamma}^\omega_1;\Z)$.
Furthermore we prove that all of Thurston's analytic cycles of $H_3(B\overline{\Gamma}^\omega_1;\Z)$
mentioned above are contained in this direct summand.



\vspace{3mm}
Convention: In this paper, for any group $G$, including $\mathit{\mathit{PSL}}(2,\R)$ and $\R$, $H_*(G;\Z)$ will denote the integral homology group of $G$
considered as a {\it discrete} group.

\section{Statement of the main results}

\begin{thm}
Thurston's surjective homomorphism
$
gv: H_3(B\overline{\Gamma}_1^\omega;\Z)\rightarrow \R
$
splits. 
The associated direct summand $\R\subset H_3(B\overline{\Gamma}_1^\omega;\Z)$
maps isomorphically onto Tsuboi's direct summand $\R\subset H_3(B\overline{\Gamma}_1^\infty;\Z)$
under the natural projection.
\label{th:main}
\end{thm}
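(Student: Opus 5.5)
We follow Tsuboi's scheme with the Reeb component replaced by a real analytic filling coming from the affine group. Write $G=\mathit{PSL}(2,\R)$ and $K=\mathit{PSO}(2)$. Both $G$ and the affine group $\R\rtimes\Z^2$ (translations together with two homotheties $x\mapsto \lambda x,\ x\mapsto\mu x$ with $\log\lambda,\log\mu$ rationally independent) act real analytically on suitable one-dimensional models, so they map into the Haefliger group $\pi_1(B\overline{\Gamma}^\omega_1)$. Since $B\overline{\Gamma}^\omega_1$ is a $K(\pi,1)$, the homology of any such subgroup maps into $H_*(B\overline{\Gamma}^\omega_1;\Z)$.

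\textbf{Step 1: a $3$-chain for each relative $2$-cycle.} Take a class $z\in H_2(G,K;\Z)\cong\R$ from \eqref{eq:rel}, represented by a chain $c$ with $\partial c$ a $1$-cycle in $K$. Following Thurston and Tsuboi, we build the associated $3$-chain of $B\overline{\Gamma}^\omega_1$ whose boundary is a $2$-cycle $t(z)$ supported on the translation subgroup $\R$. Concretely, the rotation part near the boundary circle is conjugated, as a germ at infinity of the Thurston construction, to translations, so the boundary is a linear foliation of the torus, i.e.\ a class in $H_2(\R;\Z)=\Lambda^2_\Z\R$.

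\textbf{Step 2: cap off without Reeb components.} For each class $\alpha\in H_2(\R;\Z)$ we write down an explicit $3$-chain $F(\alpha)$ in $\R\rtimes\Z^2$ with $\partial F(\alpha)=\alpha$. The homotheties act on $\Lambda^2\R$ by multiplication by $\lambda^2$, so $\alpha=\lambda^{-2}\cdot\lambda\alpha$-type coboundary formulas $\alpha-h\cdot\alpha=\partial(\ldots)$ let us solve $\alpha=\partial F$. We also check that the Godbillon--Vey cocycle vanishes on $F(\alpha)$ at the chain level; this holds because affine maps have constant $\log$-derivative along translations, so $d\log f'\wedge\ldots$ terms vanish. \textbf{Uniqueness} of the filling up to homology follows from $H_3(\R\rtimes\Z^2;\Z)\to H_3(B\overline\Gamma^\omega_1)$ being trivial on the relevant part. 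We prove this by showing acyclicity of the affine group: a Hochschild--Serre spectral sequence argument in which $\Z^2$ acts on $\Lambda^q_\Z\R$ via the multipliers $\lambda^q\mu^{q'}\ne1$ kills the $E^2$ terms with $q\ge1$. This is the step we expect to be the main obstacle, because one must control $H_*(\Z^2;\Lambda^q\R)$ integrally, including torsion. We would first try the center-kills argument: an element acting as a nontrivial scalar $s$ with $s-1$ invertible on $\Lambda^q\R\otimes\Q$ forces $H_p=0$. We then handle the remaining low-degree contributions, coming from $H_*(\Z^2)$ itself, by checking that they are killed in $\pi_1(B\overline\Gamma^\omega_1)$ or do not affect the well-definedness modulo boundaries.

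\textbf{Step 3: splitting and comparison.} Define $s:\R\cong H_2(G,K)\to H_3(B\overline\Gamma^\omega_1;\Z)$ by $s(z)=[\text{Step 1 chain}]-F(t(z))$. Step 2 makes $s$ well defined and additive, since fillings of sums differ by boundaries in the acyclic group. We have $gv\circ s=gv$ of Thurston's chain, which equals a nonzero multiple of the identity by Tsuboi's computation, because $gv(F)=0$. Hence $gv$ splits. For the second assertion, under $B\overline\Gamma^\omega_1\to B\overline\Gamma^\infty_1$ the filling $F(\alpha)$ and the Reeb-type filling used by Tsuboi bound the same $\alpha$. Their difference is therefore a closed $3$-class supported on data with vanishing $gv$. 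Tsuboi's uniqueness result, that the summand is detected by $gv$ among such fillings, identifies the image of $s$ with his summand, and injectivity follows because $gv$ is preserved.
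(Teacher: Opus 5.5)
Your Steps 1--2 follow the paper's plan. You lift to $\widetilde{\mathit{PSL}}(2,\R)$ so that the boundary becomes $1\wedge s\in H_2(\R;\Z)$, then cap off inside an affine group $\R\rtimes\Z^2$ in which $H_2(\R)$ and $H_3(\R)$ die and $H_3$ vanishes. Your $s$ is the paper's $\Phi$. However, Step 3 has a genuine gap.

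Write $\pi: H_3(B\overline{\Gamma}^\omega_1;\Z)\to H_3(B\overline{\Gamma}^\infty_1;\Z)$ for the projection, $t_1$ for Tsuboi's map, and $R(\alpha)$ for the Reeb cap of $\alpha\in H_2(\R;\Z)$. Then $\pi\circ s$ differs from $t_1$ by $D\circ\partial$, where $D(\alpha)=[F(\alpha)-R(\alpha)]$ is a closed class in $H_3(B\overline{\Gamma}^\infty_1;\Z)$. Granting $gv(D(\alpha))=0$, this only puts $D$ in $\Ker gv$. So $\pi(\Im s)$ is the graph of a possibly nonzero homomorphism $\R\to\Ker gv$ over Tsuboi's summand: another complement of $\Ker gv$, not necessarily his.

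Tsuboi's theorem that the image of $H_2(G_T,\mathit{SO}(2);\Z)$ is his summand concerns cycles capped by Reeb components only. It says nothing about the mixed class $D(\alpha)$. Proving $D=0$ directly means showing the analytic cap and the Reeb cap are homologous as $\overline{\Gamma}^\infty_1$-structures, which the paper explicitly sidesteps as delicate.

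Your evaluation of $gv\circ s$ has the same weakness. The value of $gv$ on a chain with boundary is not a homological invariant. ``Tsuboi's value on the $\mathit{PSL}$ part plus $gv(F)=0$'' needs a single cocycle that vanishes on affine chains and also splits Tsuboi's Reeb-capped computation consistently, and you do not supply one.

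The missing idea is to compare the two constructions through Thurston's analytic cycle. Take the homotheties to be $S_2,S_3$. This also repairs Step 2: multiplication by $\lambda$ acts on $\Lambda^2_\Z\R$ as the scalar $\lambda^2$ only for rational $\lambda$. For algebraically independent $\lambda,\mu$, which satisfy your hypothesis, the class $1\wedge\lambda$ survives in $H_2(\R\rtimes\Z^2;\Z)$. It is detected by the invariant alternating form $\ell(f\bar g-\bar f g)$ on $\Q(\lambda,\mu)$, where $\bar\lambda=\lambda^{-1}$ and $\bar\mu=\mu^{-1}$, extended to $\R$ by a $\Q(\lambda,\mu)$-linear projection.

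The comparison then runs as follows.
\begin{itemize}
\item $\varphi_2$, which is conjugation by $S_2^{-1}$, carries $\widetilde{\mathit{PSL}}(2,\R)$ onto $\widetilde{\mathit{SL}}(2,\R)$. Hence $\widetilde{\mathit{SL}}(2,\R)$ lies in $\widetilde{\mathit{PSL}}(2,\R)*_\R(\R\rtimes Z_{(2,3)})$, and one defines $\Phi_2$ for $\mathit{SL}(2,\R)$ using the same caps.
\item $\varphi_2$ is inner in that group and sends $1\wedge s$ to $\frac14(1\wedge s)$, so $\Phi_2(s)=4\Phi(s)$. Tsuboi proved $t_2(s)=4t_1(s)$.
\item The caps cancel in $\Phi_2(s)-\Phi(s)$ and in $t_2(s)-t_1(s)$. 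Both reduce to Thurston's cycle for $\mathit{PSL}(2,\R)*_{\mathit{SO}(2)}\mathit{SL}(2,\R)$, so $3\pi\Phi(s)=3t_1(s)$ for all $s$.
\item Therefore $\pi\Phi(s)=3\pi\Phi(s/3)=3t_1(s/3)=t_1(s)$.
\end{itemize}
The splitting and $gv\circ\Phi=gv\circ t_1$ then follow from Tsuboi's theorem, with no chain-level evaluation of $gv$ at all.
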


We give two proofs of this theorem. One proof described in $\S 5$ is given by constructing
a homomorphism
$$
H_2(\mathit{\mathit{PSL}}(2,\R),\mathit{PSO}(2);\Z)\overset{\mathit{Tsuboi}}{\cong}\R \rightarrow H_3(B\overline{\Gamma}_1^\omega;\Z)
$$
such that its composition with the homomorphism $gv$ above is an isomorphism.
The other proof described in $\S 6$ is closer to Thurston's original analytic construction.
More precisely, we show that the image of the natural homomorphism
$$
H_2(\mathit{\mathit{PSL}}(2,\R)*_{\mathit{SO}(2)} \mathit{SL}(2,\R);\Z) \rightarrow H_3(B\overline{\Gamma}_1^\omega;\Z)
$$
considered by him
is already isomorphic to $\R$ such that the restriction of $gv$ on it gives an isomorphism.

Now in order to treat Thurston's construction systematically, 
Tsuboi defined in \cite{tsuboi84} a group $G_T$ to be the free product of all the $n$-fold covering
groups $\mathit{\mathit{PSL}}(2,\R)^{(n)}\ (n=1,2,\cdots)$ of $\mathit{\mathit{PSL}}(2,\R)$ amalgamated along their maximal compact subgroups
all of which are canonically identified with the rotation group $\mathit{SO}(2)$. There is a natural homomorphism
$$
\rho_T: G_T\rightarrow \mathrm{Diff}_+^\infty S^1.
$$
By using this, he constructed a homomorphism 
\begin{equation}
H_2(G_T,\mathit{SO}(2);\Z)\rightarrow H_3(B\overline{\Gamma}_1^\infty;\Z)
\label{eq:th}
\end{equation} 
and proved that its image is equal to his direct summand (Theorem 1.2 of the above cited paper).
The following is a real analytic enhancement of this result.

\begin{thm}
Tsuboi's homomorphism can be lifted to a homomorphism 
$$
H_2(G_T,\mathit{SO}(2);\Z)\rightarrow H_3(B\overline{\Gamma}_1^\omega;\Z)
$$
and its image coincides with the direct summand $\R$ given in Theorem \ref{th:main}.
\label{th:mainb}
\end{thm}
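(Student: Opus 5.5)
The plan follows Tsuboi's smooth construction, with each Reeb component replaced by the affine chains of the group $A=\R\rtimes\Z^2$ described in the introduction. Start from a relative 2-cycle $z$ of $(G_T,\mathit{SO}(2))$. Its boundary $\partial z$ is a 1-cycle of $\mathit{SO}(2)$, regarded as a discrete group. Acting on $S^1=\R/\Z$, the rotation group is the quotient of the translation group $\R$ of the universal cover. So, via the suspension/Mather--Thurston type passage $H_k(\mathrm{Diff}^\omega_+ S^1)\to H_{k+1}(B\overline{\Gamma}^\omega_1)$, the chain $z$ yields a 3-chain of $B\overline{\Gamma}^\omega_1$. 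The boundary of this 3-chain is a 2-cycle carried by the translation group $\R\subset\pi_1(B\overline{\Gamma}^\omega_1)$, that is, an element of $H_2(\R;\Z)=\Lambda^2_\Z\R$: a sum of linear foliations on tori.

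Step one is to cap this boundary off. We fill it with the explicit 3-chain $c(\partial z)$ in $A=\R\rtimes\Z^2\subset\pi_1(B\overline{\Gamma}^\omega_1)$ constructed earlier in the paper. The Godbillon--Vey cocycle vanishes on $c(\partial z)$ at the chain level. The key input is the uniqueness statement: the filling is well defined up to homology. This should follow from the acyclicity result for the affine group in the Haefliger group, which says that $H_3$ of the relevant affine subgroup maps trivially to $H_3(B\overline{\Gamma}^\omega_1)$. Two fillings then differ by a cycle of that group and so by zero. We therefore define
\[
\Phi(z)=[\,\hat z - c(\partial z)\,]\in H_3(B\overline{\Gamma}^\omega_1;\Z),
\]
where $\hat z$ is the suspended chain. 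Three checks make $\Phi$ a homomorphism on relative homology:
\begin{itemize}
\item $\Phi$ is additive, since $c$ is additive up to homology by uniqueness;
\item $\Phi$ is independent of the choice of $z$ within its relative class, since changing $z$ by a boundary plus a chain in $\mathit{SO}(2)$ changes $\hat z-c(\partial z)$ by a boundary, again by uniqueness;
\item $\Phi$ lifts Tsuboi's homomorphism, since under $B\overline{\Gamma}^\omega_1\to B\overline{\Gamma}^\infty_1$ the affine filling and the Reeb-component filling of the same linear foliations agree in $H_3(B\overline{\Gamma}^\infty_1)$. Tsuboi's own uniqueness of Reeb fillings (and triviality of $gv$ on both) gives this, or one notes that both fillings bound in $B\overline{\Gamma}^\infty_1$ by a smooth acyclicity argument.
\end{itemize}

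Step two is to identify the image. Since $gv$ vanishes on $c(\partial z)$, we get $gv\circ\Phi(z)=gv(\hat z)$, which is Tsuboi's value. By Tsuboi, this composite restricted to $H_2(\mathit{PSL}(2,\R),\mathit{PSO}(2))\cong\R$ is an isomorphism. Its image is the summand $\R$ of Theorem~\ref{th:main}, as constructed in \S5, provided the \S5 summand is defined by this same map on the $n=1$ factor; we take that as given. It remains to show that the image of all of $H_2(G_T,\mathit{SO}(2))$ lies in this summand. Following Tsuboi's proof of his Theorem~1.2, we decompose $H_2(G_T,\mathit{SO}(2))$ via the Mayer--Vietoris sequence of the amalgam into contributions of each $\mathit{PSL}(2,\R)^{(n)}$ relative to $\mathit{SO}(2)$. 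Each relative group $H_2(\mathit{PSL}(2,\R)^{(n)},\mathit{SO}(2))$ is isomorphic to $\R$ through the covering map to $\mathit{PSL}(2,\R)$, together with rescaling on the $\mathit{SO}(2)$ boundary. The $n$-fold cover acts on $S^1$ as the conjugate of the $\mathit{PSL}(2,\R)$ action by the $n$-fold covering of the circle. So a relative cycle for $\mathit{PSL}(2,\R)^{(n)}$ and its image cycle for $\mathit{PSL}(2,\R)$ differ, after suspension, by a cycle supported on the affine and translation parts. Our uniqueness kills exactly such a cycle, so it vanishes in $H_3$. Every image element is therefore a multiple of an element of the $n=1$ summand, and the image equals the summand.

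The main obstacle is this last comparison between $n$-fold covers. The boundary linear foliations change by the rescaling $\R\to\R$, $t\mapsto nt$, on the translation group, and we must show that the discrepancy chain lives in a group, something like the affine group enlarged by that homothety, whose $H_3$ dies in $B\overline{\Gamma}^\omega_1$. I would first try to realize multiplication by $n$ on $\R$ as conjugation by an element of the affine group. Conjugation acts trivially on homology, so the difference would reduce to the acyclicity already established.
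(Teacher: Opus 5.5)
Your architecture is the same as the paper's. You Gysin-lift the relative cycle, cap its boundary $1\wedge s$ with an affine chain, get uniqueness from the triviality of $H_3(\R\rtimes\R^+;\Z)\to H_3(\Gamma_H;\Z)$, and reduce the $n$-fold covers to $n=1$ by a homothety. However, the third check of your first step has a genuine gap.

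You assert that the affine cap and the Reeb cap of the same linear foliation agree after projecting to $H_3(B\overline{\Gamma}^\infty_1;\Z)$. None of your justifications establishes this:
\begin{itemize}
\item any uniqueness of Reeb fillings compares Reeb fillings with one another, not with a chain of $\R\rtimes Z_{(2,3)}$;
\item vanishing of $gv$ on both caps only puts the glued difference cycle in $\mathrm{Ker}\,gv$, which is not known to be zero;
\item acyclicity of the affine group cannot help, because the Reeb cap is not a chain of that group.
\end{itemize}
The paper explicitly calls this direct comparison ``a rather delicate problem'' and avoids it. Instead it proves $\Phi(s)\mapsto t_1(s)$ through Thurston's analytic cycle. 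In $\Phi_2(s)-\Phi(s)$ the caps $C\langle 1,s\rangle$ cancel, leaving Thurston's cycle of $\widetilde{\mathit{PSL}}(2,\R)*_\R\widetilde{\mathit{SL}}(2,\R)$. In $t_2(s)-t_1(s)$ the Reeb parts cancel, leaving the projection of the same cycle. With $\Phi_2=4\Phi$ and $t_2=4t_1$ this gives $3\Phi(s)\mapsto 3t_1(s)$, and unique divisibility finishes the argument. The lift property on the $n$-th summand then follows from $\Phi_n=n^2\Phi$ together with Tsuboi's $t_n=n^2t_1$; you should invoke this instead of a cap-by-cap comparison. Your claim that $gv\circ\Phi(z)=gv(\hat z)$ ``is Tsuboi's value'' has the same problem: it needs a chain-level comparison that the paper deliberately does not carry out, and it is unnecessary once Theorem~\ref{th:main} is available.

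The comparison of $n$-fold covers, which you flag as the main obstacle, is only sketched, and your intermediate description is not accurate. The relative cycle for $\mathit{PSL}(2,\R)^{(n)}$ does not differ from one for $\mathit{PSL}(2,\R)$ by a cycle supported on the affine and translation parts. What actually happens is as follows:
\begin{itemize}
\item Inside $\mathrm{Diff}^\omega_+\R$ one has $\widetilde{\mathit{PSL}}(2,\R)^{(n)}=\varphi_n(\widetilde{\mathit{PSL}}(2,\R))$, where $\varphi_n$ is conjugation by $S_n^{-1}$. So everything lives in $\widetilde{\mathit{PSL}}(2,\R)*_\R(\R\rtimes Z_{(2,3,n)})$, or simply in $\Gamma_H$.
\item Since $\varphi_n(1\wedge s)=\frac1n\wedge\frac sn=\frac{1}{n^2}(1\wedge s)$, one gets $\Phi_n(s)=n^2\varphi_n(\Phi(s))$.
\item Uniqueness of the cap (Corollary~\ref{cor:unique}, or Proposition~\ref{prop:ac} if you work in $\Gamma_H$) is used exactly to see that the cap commutes with $\varphi_n$.
\item Since $\varphi_n$ is inner, this yields $\Phi_n(s)=n^2\Phi(s)$.
\end{itemize}
Your closing idea, realizing multiplication on $\R$ as conjugation by a homothety, is this argument, so this part is incomplete rather than wrong. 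But you need the exact factor $n^2$, not just ``a multiple'', to match $t_n=n^2t_1$ and obtain the lift property for $n\ge 2$.
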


\begin{remark}
There is a sequence of natural homomorphisms
$$
H_2(G_T;\Z) \overset{\rho_*}{\rightarrow} H_2(\mathrm{Diff}_+^\omega S^1;\Z)\rightarrow
H_3(\widetilde{\mathrm{Diff}}_+^\omega S^1;\Z)\rightarrow H_3(B\overline{\Gamma}_1^\omega;\Z).
$$
The above theorem includes the statement that the image of the above homomorphism is 
the direct summand $\R$ given in Theorem \ref{th:main}. 
\end{remark}

\begin{cor}
All of Thurston's real analytic cycles in $H_3(B\overline{\Gamma}_1^\omega;\Z)$ are contained in 
the direct summand $\R$ given in Theorem \ref{th:main}. It follows that the Godbillon-Vey
invariant is the complete invariant for such foliations.
\end{cor}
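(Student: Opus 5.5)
The corollary follows from Theorem \ref{th:mainb} once we place every Thurston real analytic cycle in the image of the lifted homomorphism $H_2(G_T,\mathit{SO}(2);\Z)\rightarrow H_3(B\overline{\Gamma}_1^\omega;\Z)$. The plan has three steps, and only the first requires real work.

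First, we identify the cycles. Thurston's analytic construction, as written out by Brooks, takes finitely many foliated blocks. Each block is the unit tangent bundle of a hyperbolic surface with boundary, or its $n$-fold fiberwise cover. The holonomy of each block comes from a representation of the surface group into $\mathit{PSL}(2,\R)^{(n)}$ whose boundary holonomies are rotations, i.e.\ elements of $\mathit{SO}(2)$. The blocks are glued along boundary tori carrying matching linear foliations.

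Group-theoretically, the gluing says the combined holonomy factors through a free product of copies of $\mathit{PSL}(2,\R)^{(n)}$ amalgamated over $\mathit{SO}(2)$, and this maps to $G_T$. The glued closed $3$-manifold is a circle bundle over a closed surface assembled from the pieces. Its foliation therefore represents the image, under the map $H_2(\mathrm{Diff}_+^\omega S^1)\to H_3(\widetilde{\mathrm{Diff}}^\omega_+S^1)\to H_3(B\overline{\Gamma}^\omega_1)$ from the Remark, of a class in $H_2(G_T;\Z)$ pushed forward by $\rho_*$.

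Second, we pass to the relative group. The class in $H_2(G_T;\Z)$ maps to $H_2(G_T,\mathit{SO}(2);\Z)$. The Remark (contained in Theorem \ref{th:mainb}) says the image of $H_2(G_T;\Z)$ already lies in the direct summand $\R$. This is consistent with the relative construction, since the lift of Tsuboi's homomorphism, restricted to absolute classes, agrees with the composite in the Remark. Hence every Thurston cycle lies in the direct summand $\R$ of Theorem \ref{th:main}.

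Third, we deduce completeness of the Godbillon--Vey invariant. By Theorem \ref{th:main}, $gv$ restricted to the direct summand $\R$ is an isomorphism onto $\R$. So two Thurston cycles with the same Godbillon--Vey number are homologous in $H_3(B\overline{\Gamma}_1^\omega;\Z)$.

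The main obstacle is the first step. We must check carefully that Thurston's glued real analytic foliation, described geometrically, defines exactly the homology class produced by $\rho_*$ followed by the maps in the Remark. This requires care with orientations and with identifying the boundary linear foliations with rotations in the common $\mathit{SO}(2)$. I would handle it by writing the closed surface as a union of the pieces along circles and applying Mayer--Vietoris for the amalgamated product. The fundamental class of the surface then becomes a $2$-cycle of $G_T$, and the foliated circle bundle over the surface is by construction the one classifying this cycle.
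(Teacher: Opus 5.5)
Your proposal is correct and matches how the paper obtains the corollary: it is read off from Theorem~\ref{th:mainb} through the Remark that follows it, which says the image of $H_2(G_T;\Z)\rightarrow H_3(B\overline{\Gamma}_1^\omega;\Z)$ is the summand $\R$ on which $gv$ is an isomorphism (and, for Thurston's original group $\Gamma_T$, also from Theorem~\ref{th:maint}). The paper takes Thurston's analytic cycles to be, by definition, the images of such absolute classes of $G_T$ (or $\Gamma_T$), so it does not argue the geometric identification you single out as the main obstacle either.
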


As a corollary to Theorem \ref{th:main}, we show the existence of a huge amount of
non-trivial cohomology classes in $H^4(B\overline{\Gamma}^\omega_1;\Z)$.
These classes serve as a new kind of characteristic class of foliations in the following sense.
They vanish on any space
with $\overline{\Gamma}^\omega_1$
structure if its third homology group is 
finitely generated. On the other hand, they detect uniquely divisible (i.e. $\Q$ vector space)
subgroup in the third integral homology group of the space
on which the Godbillon-Vey class
evaluates non-trivially.
The same is true for the smooth case as well.

\begin{thm}
Let $gv: B\overline{\Gamma}_1^\omega\rightarrow K(\R,3)$ be the classifying map of the
Godbillon-Vey class. Then the induced homomorphism
$$
gv^*: H^4(K(\R,3);\Z)\cong \mathrm{Ext}(\R,\Z)\cong \prod_{\lambda\in\mathcal{H}} \R_\lambda
\rightarrow H^4(B\overline{\Gamma}_1^\omega;\Z)
$$
is injective. The same is true if we replace $B\overline{\Gamma}_1^\omega$ with $B\overline{\Gamma}_1^\infty$.
Here $\mathcal{H}$ denotes a Hamel basis of $\R$ considered as a $\Q$ vector space and $\R_\lambda$ denote
copies of the additive group $\R$ indexed by $\lambda\in\mathcal{H}$. 

Thus the left hand side is a huge group.
However, for any $\overline{\Gamma}_1$ structure on any space $X$,
the characteristic classes
in $H^4(X;\Z)$ corresponding to this group all vanish if $H_3(X;\Z)$ is 
finitely generated.
These classes detect uniquely divisible (i.e. $\Q$ vector space) subgroup of  
$H_3(X;\Z)$ on which the Godbillon-Vey class takes non-trivial values.
\label{th:curious}
\end{thm}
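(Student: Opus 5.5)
The plan is to combine the universal coefficient theorem with the splitting of Theorem \ref{th:main}. Regard $gv$ as a class in $H^3(B\overline{\Gamma}_1^\omega;\R)$. Its classifying map $gv: B\overline{\Gamma}_1^\omega\rightarrow K(\R,3)$ (with $\R$ discrete) induces on $H_3(\,\cdot\,;\Z)$ the homomorphism $gv: H_3(B\overline{\Gamma}_1^\omega;\Z)\rightarrow H_3(K(\R,3);\Z)\cong\R$ of Theorem \ref{th:main}. First I compute $H^4(K(\R,3);\Z)$. By Hurewicz, $H_3(K(\R,3);\Z)\cong\R$. Since $\R$ is a $\Q$-vector space, rationally $K(\R,3)$ has the homology of the free graded-commutative algebra on $\R$ in degree $3$. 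Moreover $H_*(K(\Q,3);\Z)$ is uniquely divisible in positive degrees, which follows by a Serre spectral sequence argument from $K(\Q,2)$, and it vanishes in degree $4$. Hence $H_4(K(\R,3);\Z)=0$, and the universal coefficient theorem gives
$$
H^4(K(\R,3);\Z)\cong \mathrm{Hom}(H_4,\Z)\oplus\mathrm{Ext}(H_3,\Z)=\mathrm{Ext}(\R,\Z).
$$
Writing $\R\cong\bigoplus_{\mathcal H}\Q$ gives $\mathrm{Ext}(\R,\Z)\cong\prod_{\mathcal H}\mathrm{Ext}(\Q,\Z)$, and $\mathrm{Ext}(\Q,\Z)\cong\widehat{\Z}\otimes\Q/\Q\cong\R$ (as abstract groups, both being $\Q$-vector spaces of continuum dimension). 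This yields the stated product.

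For injectivity, both sides carry universal coefficient sequences, and $gv^*$ is natural with respect to them. On the Ext part, the map is $\mathrm{Ext}(\R,\Z)\rightarrow\mathrm{Ext}(H_3(B\overline{\Gamma}_1^\omega;\Z),\Z)$, induced by $gv_*: H_3\rightarrow\R$. By Theorem \ref{th:main}, $gv_*$ has a section $s:\R\rightarrow H_3$ with $gv_*\circ s=\mathrm{id}$. Since $\mathrm{Ext}(\,\cdot\,,\Z)$ is an additive functor, $s^*\circ gv_*^*=\mathrm{id}$, so $gv_*^*$ is injective, and in fact a split injection. The Ext subgroup injects into $H^4$. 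Because the source $H^4(K(\R,3);\Z)$ consists entirely of its Ext part, $gv^*$ is injective. The $C^\infty$ case is identical, using Tsuboi's splitting; alternatively it follows by composing with the natural map, since Theorem \ref{th:main} states that the summands correspond. The main care point is the computation $H_4(K(\R,3);\Z)=0$ together with the naturality of the UCT splitting. Naturality holds for the Ext subgroup even though the splitting itself is not natural, and that is all the argument needs.

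For the vanishing statement, let $f:X\rightarrow B\overline{\Gamma}_1^\omega$ classify the structure. The pulled-back classes are $(gv\circ f)^*c$, and they lie in the image of $\mathrm{Ext}(\R,\Z)\rightarrow \mathrm{Ext}(H_3(X;\Z),\Z)$ induced by $(gv\circ f)_*:H_3(X;\Z)\rightarrow\R$. If $H_3(X;\Z)$ is finitely generated, its image in $\R$ is a finitely generated subgroup of a torsion-free group, hence free abelian $A\cong\Z^k$. The map factors through $\mathrm{Ext}(A,\Z)=0$, so all classes vanish. Conversely, suppose $D\subset H_3(X;\Z)$ is a uniquely divisible subgroup on which $gv$ is nonzero. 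Then $gv(D)$ is a nonzero $\Q$-subspace $V\subset\R$ and a direct summand of $\R$. Restricting to $D$ gives $\mathrm{Ext}(\R,\Z)\rightarrow\mathrm{Ext}(V,\Z)\rightarrow\mathrm{Ext}(D,\Z)$. The first arrow is surjective. The second is injective, because $D\rightarrow V$ is a surjection of $\Q$-vector spaces and therefore splits. So some class restricts nontrivially, which is the detection claim.
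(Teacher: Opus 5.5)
Your proof is correct and is essentially the paper's: the universal coefficient theorem with $H_4(K(\R,3);\Z)=0$, naturality of the Ext subgroup, and the section of $gv_*$ supplied by Theorem~\ref{th:main}. Your side variations are all sound. These are computing $\mathrm{Ext}(\Q,\Z)\cong(\widehat{\Z}\otimes\Q)/\Q$ instead of citing Wiegold; deducing the $C^\infty$ case formally from $gv^\omega\simeq gv^\infty\circ p$ with $p\colon B\overline{\Gamma}_1^\omega\to B\overline{\Gamma}_1^\infty$; and killing the finitely generated case through the free image $A\cong\Z^k$ rather than via ``divisible maps trivially to finite''. Your detection argument through $\mathrm{Ext}(\R,\Z)\twoheadrightarrow\mathrm{Ext}(V,\Z)\hookrightarrow\mathrm{Ext}(D,\Z)$ is in fact tidier than the paper's, which asserts $H_3(X;\Z)=\mathrm{Ker}\,gv\oplus W$ with $W\subset D$, and that splitting fails whenever $gv(H_3(X;\Z))\neq gv(D)$.
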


\begin{remark}
It is known that there exist open $3$-manifolds which are $K(\Q,1)$ spaces
(cf. Evans-Moser \cite{em} and an article in mathoverflow \cite{mo}). By suitably adapting the construction of one such manifold
to our situation using the results of this paper,
we can construct open $10$-dimensional manifolds $M$ with the following properties for each $M$.

$(i)$\ $\pi_1(M)$ is a countable subgroup of $\mathrm{Diff}_+^\omega \R\subset \pi_1(B\overline{\Gamma}^\omega_1)$
(or more precisely a subgroup of $\widetilde{\mathit{PSL}}(2,\R)_{(2,3)}$ introduced in $\S 5$ below),

$(ii)$\ $H_3(M;\Z)$ contains a subgroup $V$ which is isomorphic to $\Q$, or more generally finite dimensional
$\Q$ vector space, such that the homomorphism $gv: H_3(M;\Z)\rightarrow \R$
induced by the Godbillon-Vey class is injective on $V$. 

\vspace{1mm}
\noindent
By the above Theorem \ref{th:curious}, we see that the homomorphism
$$
H^4(B\overline{\Gamma}^\omega_1;\Z)\rightarrow H^4(M;\Z)
$$
induced by the $\overline{\Gamma}^\omega_1$ structure on $M$ is non-trivial. Thus our new kind of characteristic
class of foliations can be non-trivial on finite dimensional manifolds with $\overline{\Gamma}^\omega_1$ structure
on them. %
Details of our construction of such manifolds will appear in future.
\end{remark}

\begin{remark}
Hurder \cite{hurder} (Corollary 6.6) proved that $H_{2q+1}(B\Gamma^+_q;\Z)$ contains a subgroup 
isomorphic to $\R$ for any $q\geq 1$ (the case $q=1$ is due to Tsuboi \cite{tsuboi84} and the case
$q=2$ is due to Boullay \cite{boullay}, refer to the above cited paper for details). 
This subgroup is detected by a suitable continuously variable characteristic class in 
$H^{2q+1}(B\Gamma^+_q;\R)$ so that it is a direct summand. Then the same argument as in the
proof for Theorem \ref{th:curious} shows that there is an injection
$$
H^{2q+2}(K(\R,2q+1);\Z)\cong\mathrm{Ext}(\R,\Z)\cong \prod_{\lambda\in\mathcal{H}} \R_\lambda
\subset H^{2q+2}(B\Gamma_q^+;\Z).
$$
\end{remark}

\section{Strategy}
Our strategy is simple. 
As already mentioned in $\S 1$, in the construction of Thurston 
he used the Reeb component to 
bound linear foliations on the torus. 
Tsuboi generalized this construction to yield many more examples.
However these constructions are not real analytic.
In order to obtain real analytic cycles, we add just two
homothety mappings of $\R$ to the group of parallel
translations on it and then prove that any linear foliation on the torus bounds
a $3$-chain in this slightly extended group.

To be more precise, first let us fix our notations. Let 
$\mathrm{Diff}_+^\infty S^1$ denote the group of
orientation preserving $C^\infty$ diffeomorphisms of $S^1$ 
and let $\widetilde{\mathrm{Diff}}_+^\infty S^1$ 
be its universal cover. Let $\mathrm{Diff}_+^\omega S^1, \widetilde{\mathrm{Diff}}_+^\omega S^1$
be the subgroups of $\mathrm{Diff}_+^\infty S^1, \widetilde{\mathrm{Diff}}_+^\infty S^1$
consisting of real analytic elements. Now consider the central extension
$$
0\rightarrow \Z\rightarrow \widetilde{\mathrm{Diff}}_+^\omega S^1
\rightarrow \mathrm{Diff}_+^\omega S^1\rightarrow 1.
$$
It has the following sub central extension
$$
0\rightarrow \Z\rightarrow \R
\rightarrow \mathit{SO}(2)\rightarrow 1
$$
where $\mathit{SO}(2)$ is the subgroup of $\mathrm{Diff}_+^\omega S^1$
consisting of rotations. On the other hand, there is a natural identification
$$
\widetilde{\mathrm{Diff}}_+^\omega S^1= \{f\in \mathrm{Diff}_+^\omega \R; Tf=fT\}\quad
\text{where $T(x)=x+1\ (x\in \R)$}
$$ 
and the subgroup $\R\subset \widetilde{\mathrm{Diff}}_+^\omega S^1$ can be written as
$$
\R=\{T_s; s\in \R\}\quad
\text{where $T_s(x)=x+s\ (x\in \R)$}.
$$
Henceforth we identify $\widetilde{\mathrm{Diff}}_+^\omega S^1$ as a subgroup of $\mathrm{Diff}_+^\omega \R$.

Now consider the subgroup
$$
\mathrm{Diff}_+^\omega \R\supset \R^+=\{S_r;r\in \R ^+\}\quad
\text{where $S_r(x)=rx\ (x\in \R)$}.
$$
Thus we have two subgroups
$$
\R=\{T_s; s\in \R\},\R^+=\{S_r; r\in \R^+\}\subset \mathrm{Diff}_+^\omega \R.
$$
\begin{lem}
Elements of the subgroup $\R^+$ act, by conjugations, on the subgroup $\R\subset \mathrm{Diff}_+^\omega \R$
as
$$
S_r T_s S_r^{-1}=T_{rs}.
$$
\label{lem:ST}
\end{lem}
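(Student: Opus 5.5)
This is a direct computation: evaluate both sides of the identity at an arbitrary point $x\in\R$. Since $\mathrm{Diff}_+^\omega \R$ is a group of maps with composition as multiplication, it suffices to show that $S_r T_s S_r^{-1}$ and $T_{rs}$ agree as maps of $\R$.

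First I would record the inverse $S_r^{-1}=S_{r^{-1}}$. This holds because $S_r S_{r^{-1}}(x)=r(r^{-1}x)=x$, and $r^{-1}\in\R^+$ whenever $r\in\R^+$. It also confirms that $\R^+=\{S_r\}$ is a subgroup of $\mathrm{Diff}_+^\omega\R$: each $S_r$ is linear with positive slope, hence a real analytic orientation preserving diffeomorphism, and $S_rS_{r'}=S_{rr'}$.

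Next I would compute the composite, applying $S_r^{-1}$ first, then $T_s$, then $S_r$:
$$
S_r T_s S_r^{-1}(x)=S_r\bigl(T_s(r^{-1}x)\bigr)=S_r(r^{-1}x+s)=r(r^{-1}x+s)=x+rs=T_{rs}(x).
$$
This holds for every $x\in\R$, so $S_rT_sS_r^{-1}=T_{rs}$ in $\mathrm{Diff}_+^\omega\R$. In particular, conjugation by $\R^+$ preserves the translation subgroup $\R=\{T_s\}$ and acts on it by multiplication by $r$ under the identification $T_s\leftrightarrow s$.

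The only thing that needs care is the composition convention: with the other order one would get $T_{r^{-1}s}$ instead. I would therefore state explicitly that products in $\mathrm{Diff}_+^\omega\R$ are compositions applied right to left. Apart from that, there is no real obstacle.
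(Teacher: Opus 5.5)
Your proposal is correct and is essentially the paper's proof: the paper also notes $S_r^{-1}=S_{r^{-1}}$ and then checks pointwise that $S_rT_sS_r^{-1}(x)=r(r^{-1}x+s)=x+rs=T_{rs}(x)$. Your additional remarks on the subgroup property and on the right-to-left composition convention are accurate but not needed.
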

\begin{proof}
Obviously $S_r^{-1}=S_{r^{-1}}$ so that
$$
S_r T_s S_r^{-1}(x)=r(r^{-1}x +s)=x+rs=T_{rs}(x).
$$
\end{proof}
It follows that the semi-direct product $\R\rtimes \R^+$ can be considered as a subgroup of 
$\mathrm{Diff}_+^\omega \R$ (the orientation preserving affine transformation group of $\R$).
In \cite{morita}, the outer endomorphisms of the group $\widetilde{\mathrm{Diff}}_+^\infty S^1$
induced by the conjugation action of homothety transformations $S_{1/n}$ for $n=2,3,\ldots$,
which we denoted by $\varphi_n$,
were introduced. We studied them further in our paper \cite{kmm}
to investigate the non-triviality question of powers of the Euler class.
The present work began when we noticed that these elements may have a role 
here again. 

For later use, we recall the explicit form of $\varphi_n$ 
(as well as its inverse):
\begin{equation}
\varphi_n(f)=S_{\frac{1}{n}}\circ f\circ S_n,\ \varphi^{-1}_n(f)=S_n\circ f\circ S_{\frac{1}{n}}
\quad (f\in \mathrm{Diff}_+^\omega \R).
\label{eq:varphi}
\end{equation}

Let $Z_{(2,3)}\subset\R^+$ be the subgroup generated by two elements $S_2,S_3$. Clearly $Z_{(2,3)}\cong\Z^2$.
We consider the small subgroup
$$
\R\rtimes Z_{(2,3)}\subset \R\rtimes\R^+.
$$

Now recall that Haefliger \cite{haefliger} proved that his classifying space $B\overline{\Gamma}^\omega_1$
for $\overline{\Gamma}^\omega_1$-structures is a $K(\pi,1)$ space. We would like to
call the group $\Gamma_H=\pi_1(B\overline{\Gamma}^\omega_1)$ the {\it Haefliger group}.
$\mathrm{Diff}_+^\omega \R$ is a subgroup of $\Gamma_H$ so that
we can write
$$
\R\rtimes Z_{(2,3)}\subset \R\rtimes\R^+\subset \mathrm{Diff}_+^\omega \R\subset \Gamma_H.
$$

Now we are ready to state our strategy. Any linear foliation on the torus can be described
as a $2$-cycle 
$$
\sigma=1 \wedge s=[(1,s)-(s,1)]\in H_2(\R;\Z)\cong\wedge^2_\Z \R
$$
of the subgroup $\R\subset \R\rtimes\R^+\subset \Gamma_H$.
The crucial point of our work is to show that any such $2$-cycle bounds a $3$-chain in 
the above small subgroup $\R\rtimes Z_{(2,3)}\subset \Gamma_H$, whose relative homology class
in $H_3(\R\rtimes Z_{(2,3)},\R;\Z)$
is uniquely determined, and
furthermore we obtain explicit formula for such $3$-chains {\it systematically}.
This gives rise to the desired direct summand $\R\subset H_3(B\overline{\Gamma}^\omega_1;\Z)=H_3(\Gamma_H;\Z)$.
One more important fact here is that the Godbillon-Vey cocycle given in \cite{mt} vanishes identically
on $\R\rtimes\R^+$. This is because of the form of the above cocycle combined with the obvious 
fact that the second derivative of any affine transformation $ax+b$
vanishes identically.  This implies that the Godbillon-Vey invariant does not change by our closing of the
linear foliations on the torus in a similar way as in the case of Reeb components
which were used in the construction of Thurston and also by Tsuboi.

\begin{remark}
The group $\R\rtimes\R^+$ considered above is nothing other than the group $\mathrm{Aff}_+ (1)$ of
orientation preserving affine transformations of the real line $\R$.
In general, let $\mathrm{Aff}^+ (n)$ denote the group of
orientation preserving affine transformations of $\R^n$. After finishing the main work of
the present paper,
we found an interesting paper \cite{rozhe} of K. Rozhe (C. Roger) in the late $1970$'s
where he proved a general fact that the natural projection $\mathrm{Aff}^+ (n)\rightarrow \mathrm{GL}^+ (n,\R)$
(considered as {\it discrete} groups)
induces an isomorphism
$$
H_*(\mathrm{Aff}^+ (n);\Z)\cong H_*(\mathrm{GL}^+ (n,\R);\Z).
$$
This result enables us to make our argument much clearer than the original.
\label{re:af}
\end{remark}

\section{Preliminary on inner automorphisms of groups}

Let $G$ be a discrete group and let $BG$ be its classifying space. It is a 
$K(G,1)$ space. For any element 
$\gamma\in G$, let
$$
\varphi: G\rightarrow G
$$
be the inner automorphism of $G$ defined by 
$\varphi(\alpha)=\gamma\alpha\gamma^{-1}\, (\alpha\in G)$.
Let $C_*(G)$ be the chain complex of $G$ with integral coefficients
so that $H_*(C_*(G))=H_*(G;\Z)$. Let
$$
B\varphi:BG\rightarrow BG
$$
be the continuous map induced by $\varphi$. As is well known,
this map is homotopic to the identity map. Therefore, the chain map
$$
\varphi_*:C_*(G)\rightarrow C_*(G)
$$
induced by $\varphi$ is chain homotopic to the identity. Namely there exists
a homomorphism
$$
H_\gamma: C_*(G)\rightarrow C_{*+1}(G)
$$
such that
\begin{equation}
\varphi_*-\mathrm{id}=\partial H_\gamma+H_\gamma\partial
\label{eq:H}
\end{equation}
where $\partial$ denotes the boundary map of $C_*(G)$.
In fact, the following formula which {\it realizes}
the homotopy $B\varphi\sim \mathrm{id.}$ simplicially,
gives an explicit expression for $H_\gamma$.

\begin{prop}
For each $n$-simplex $(\alpha_1,\ldots,\alpha_n)\in C_n(G),\  (\alpha_i\in G)$, we define
\begin{align*}
&H_\gamma(\alpha_1,\ldots,\alpha_n)=-(\gamma,\alpha_1,\ldots,\alpha_n)+(\gamma\alpha_1\gamma^{-1},\gamma,\alpha_2,\ldots,\alpha_n)\\
&-(\gamma\alpha_1\gamma^{-1},\gamma\alpha_2\gamma^{-1},\gamma,\alpha_3,\ldots,\alpha_n)+\ldots
+(-1)^n(\gamma\alpha_1\gamma^{-1},\ldots,\gamma\alpha_{n-1}\gamma^{-1},\gamma,\alpha_n)\\
&+(-1)^{n+1}(\gamma\alpha_1\gamma^{-1},\ldots,\gamma\alpha_{n}\gamma^{-1},\gamma)\in C_{n+1}(G).
\end{align*}
Then $H_\gamma$ satisfies the equality \eqref{eq:H}.
\label{prop:H}
\end{prop}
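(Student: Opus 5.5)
We verify \eqref{eq:H} directly on a basis $n$-simplex $\sigma=(\alpha_1,\ldots,\alpha_n)$, using the standard bar boundary
$$
\partial(\beta_1,\ldots,\beta_{m})=(\beta_2,\ldots,\beta_m)+\sum_{i=1}^{m-1}(-1)^i(\beta_1,\ldots,\beta_i\beta_{i+1},\ldots,\beta_m)+(-1)^m(\beta_1,\ldots,\beta_{m-1}).
$$
Set $\alpha_i'=\gamma\alpha_i\gamma^{-1}$. For $k=0,1,\ldots,n$ let
$$
h_k=(\alpha_1',\ldots,\alpha_k',\gamma,\alpha_{k+1},\ldots,\alpha_n),
$$
so that $H_\gamma(\sigma)=\sum_{k=0}^n(-1)^{k+1}h_k$. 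This is the usual prism decomposition: $h_k$ is the $k$-th simplex of the triangulation of $\Delta^n\times[0,1]$ realizing the homotopy given by the path $\gamma$. So we expect the interior faces to cancel in pairs and the two ends to give $\varphi_*\sigma$ and $\sigma$.

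First we compute $\partial h_k$ face by face and sort the faces into four types.

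\begin{itemize}
\item \emph{Faces merging $\gamma$ with a neighbour.} The merge $\alpha_k'\cdot\gamma=\gamma\alpha_k$ gives the same simplex as the merge $\gamma\cdot\alpha_k$ in $h_{k-1}$, namely $(\alpha_1',\ldots,\alpha_{k-1}',\gamma\alpha_k,\alpha_{k+1},\ldots)$. This identity is exactly where the conjugation formula enters. In $h_k$ this face occurs at position $k$ with sign $(-1)^k$; in $h_{k-1}$ it occurs at position $k$ with sign $(-1)^k$. Since $h_k$ and $h_{k-1}$ carry opposite signs $(-1)^{k+1}$ and $(-1)^k$ in $H_\gamma(\sigma)$, these contributions cancel in pairs.
\item \emph{Faces merging two adjacent $\alpha'$'s or two adjacent $\alpha$'s.} Here we use $\alpha_i'\alpha_{i+1}'=(\alpha_i\alpha_{i+1})'$. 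These faces are, up to sign, exactly the simplices $h_j$ built from the corresponding face of $\sigma$. Summed over $k$, they reproduce the interior-face terms of $-H_\gamma(\partial\sigma)$.
\item \emph{The first face (dropping the first entry) and the last face (dropping the last entry).} For $k\ge1$ and $k\le n-1$ respectively, these give the outer-face terms of $-H_\gamma(\partial\sigma)$, namely $H_\gamma$ applied to $(\alpha_2,\ldots,\alpha_n)$ and to $(\alpha_1,\ldots,\alpha_{n-1})$. This uses the fact that conjugation commutes with dropping an entry.
\item \emph{Extreme faces that survive.} The first face of $h_0$ is $\sigma$ itself, with coefficient $-1$. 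The last face of $h_n$ is $(\alpha_1',\ldots,\alpha_n')=\varphi_*\sigma$, with coefficient $(-1)^{n+1}(-1)^{n+1}=+1$.
\end{itemize}

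Collecting the four types gives $\partial H_\gamma(\sigma)=\varphi_*\sigma-\sigma-H_\gamma(\partial\sigma)$, which is \eqref{eq:H}.

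The main obstacle is the sign bookkeeping in the second and third types. One must check that the merge of $\alpha_i,\alpha_{i+1}$ inside $h_k$ sits at position $i$ or $i+1$ according as $i<k$ or $i>k$, and that the resulting sign $(-1)^{k+1}(-1)^{\text{pos}}$ matches the sign with which that simplex appears in $-H_\gamma(\partial\sigma)$, given the shifted index $k$ of the $\gamma$-slot. I would first verify the cases $n=1$ and $n=2$ explicitly to fix conventions, then do the general index shift. Alternatively, one could note that $H_\gamma$ is the image of the standard prism operator under the simplicial map $\Delta^n\times\Delta^1\to BG$, and cite the prism identity.
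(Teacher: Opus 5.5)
Your proposal is correct and follows the same route as the paper, whose proof consists only of the remark that the identity follows by direct computation; your decomposition $H_\gamma(\sigma)=\sum_{k=0}^n(-1)^{k+1}h_k$, with the four-way sorting of faces, is exactly that computation made explicit. The sign checks you deferred do go through: an $\alpha'$-merge at position $i<k$ in $h_k$ has coefficient $(-1)^{k+1+i}$, and an $\alpha$-merge at position $i+1$ with $i>k$ has coefficient $(-1)^{k+i}$, which are precisely the coefficients of the simplices $h_{k-1}(\cdot)$ and $h_k(\cdot)$ of the face $(\alpha_1,\ldots,\alpha_i\alpha_{i+1},\ldots,\alpha_n)$ in $-H_\gamma(\partial\sigma)$.
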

\begin{proof}
Direct computations imply the desired result.
\end{proof}
\begin{remark}
In the next section, we use the case $n=2$:
$$
H_\gamma(\alpha,\beta)=-(\gamma,\alpha,\beta)+(\gamma\alpha\gamma^{-1},\gamma,\beta)-(\gamma\alpha\gamma^{-1},\gamma\beta\gamma^{-1},\gamma)\
(\alpha,\beta\in G).
$$
\end{remark}

\section{Proof of Theorem \ref{th:main}}

As already mentioned in $\S$ 1, Tsuboi \cite{tsuboi84} proved that $H_2(\mathit{\mathit{PSL}}(2,\R),\mathit{PSO}(2);\Z)\cong\R$.
The parameter $s\in\R$ in this isomorphism has the following geometric meaning. 
Any homology class $\sigma$ in this group can be represented as a 
foliated $S^1$-bundle over a compact oriented surface with a boundary 
whose monodromy is contained in $\mathit{\mathit{PSL}}(2,\R)$ and restricts to a rotation on the boundary
so that the restriction of this foliated $S^1$-bundle to the boundary 
is a linear foliation on the torus.
Since the Euler class of this foliated $S^1$-bundle is trivial
(because the base surface has a boundary), the monodromy group
can be lifted to
$\widetilde{\mathit{\mathit{PSL}}}(2,\R)$ whose restriction to the boundary 
is contained in the subgroup $\R\subset \widetilde{\mathit{\mathit{PSL}}}(2,\R)$
consisting of parallel translations of $\R$.
The parameter $s$ of this cycle is the translation length of the boundary linear foliation
as a foliated $S^1$-product over the circle.
In the case when monodromy of the boundary is trivial so that the foliated $S^1$-bundle
extends to the closed surface, the parameter $s$ is the same as the Euler class
evaluated on the fundamental class which is an integer.
More concretely, for a given homology class $\sigma\in H_2(\mathit{PSL}(2,\R),\mathit{PSO}(2);\Z)$,
its parameter $s(\sigma)\in\R$ can be determined as follows. In the setting of group homology,
$\sigma$ is represented by $2k$ elements $\alpha_i,\beta_i\in  \mathit{PSL}(2,\R)\ (i=1,\ldots,k)$ such that
$$
[\alpha_1,\beta_1]\cdots [\alpha_k,\beta_k]=
\partial(\sigma)\in H_1(\mathit{PSO}(2);\Z)=\mathit{PSO}(2)\ (=\R/\Z).
$$
Now choose
lifts $\tilde\alpha_i,\tilde\beta_i\in \widetilde{\mathit{PSL}}(2,\R)$ of the elements
$\alpha_i,\beta_i
$. Then the value
$$
[\tilde\alpha_1,\tilde\beta_1]\cdots [\tilde\alpha_k,\tilde\beta_k]\in \R\subset \widetilde{\mathit{PSL}}(2,\R)
$$
is well defined independent of the choices made and this is the parameter $s(\sigma)$.

To make a real analytic relative $3$-cycle which bounds linear foliations on the torus, we consider
the homology group of the pair $(\R\rtimes Z_{(2,3)},\R)$. Consider the long exact sequence
\begin{equation}
\cdots \rightarrow H_3(\R;\Z)\rightarrow H_3(\R\rtimes Z_{(2,3)};\Z)\rightarrow
H_3(\R\rtimes Z_{(2,3)},\R;\Z)\overset{\partial}{\rightarrow} H_2(\R;\Z)\rightarrow \cdots
\label{eq:les}
\end{equation}

\begin{prop}
The boundary operator $\partial$ gives rise to an isomorphism
$$
\partial: H_3(\R\rtimes Z_{(2,3)},\R;\Z)\overset{\cong}{\rightarrow} H_2(\R;\Z).
$$
\label{prop:b}
\end{prop}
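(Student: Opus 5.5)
By the long exact sequence \eqref{eq:les}, the plan is to show that both neighbouring maps vanish. It suffices that the inclusion-induced map $H_*(\R;\Z)\rightarrow H_*(\R\rtimes Z_{(2,3)};\Z)$ is zero in degrees $2$ and $3$. Then $\partial$ is surjective onto $H_2(\R;\Z)$, because the next map $H_2(\R)\rightarrow H_2(\R\rtimes Z_{(2,3)})$ is zero. It is also injective, because the image of $H_3(\R\rtimes Z_{(2,3)})$ in the relative group is the cokernel-side term. To make the argument clean I will actually aim for a stronger statement: $H_*(\R;\Z)\rightarrow H_*(\R\rtimes Z_{(2,3)};\Z)$ is zero in positive degrees, and $H_3(\R\rtimes Z_{(2,3)};\Z)\rightarrow H_3(\R\rtimes Z_{(2,3)},\R;\Z)$ is zero. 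The second follows from exactness once I know $H_3(\R)\rightarrow H_3(\R\rtimes Z_{(2,3)})$ is surjective, or more simply from computing the whole homology.

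The key computation is the Lyndon--Hochschild--Serre spectral sequence for $1\rightarrow\R\rightarrow \R\rtimes Z_{(2,3)}\rightarrow \Z^2\rightarrow 1$, so
$$E^2_{p,q}=H_p(\Z^2;H_q(\R;\Z)).$$
Here $H_q(\R;\Z)\cong\wedge^q_\Z\R$ for $q\geq1$, since $\R$ is torsion free and a direct limit of free abelian groups. By Lemma \ref{lem:ST}, the generator $S_2$ acts on $\wedge^q\R$ by multiplication by $2^q$, and $S_3$ by $3^q$. Now $\wedge^q\R$ is a $\Q$-vector space for $q\geq1$, and $2^q-1\neq 0$ acts invertibly on it. Hence $H_p(\Z^2;\wedge^q\R)=0$ for all $p$ and all $q\geq1$. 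This follows from the Koszul/Künneth description of $\Z^2$-homology: the homology of $\langle S_2\rangle$ with coefficients in a module on which $S_2-1$ is invertible vanishes, and the Hochschild--Serre sequence for $\Z^2=\Z\times\Z$ finishes the argument.

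Only the row $q=0$ survives, so $H_*(\R\rtimes Z_{(2,3)};\Z)\cong H_*(\Z^2;\Z)$. In particular $H_3(\R\rtimes Z_{(2,3)};\Z)=0$, and the map from $H_2(\R)$ factors through $E^\infty_{0,2}=0$, so it is zero. The exact sequence then gives
$$0=H_3(\R\rtimes Z_{(2,3)})\rightarrow H_3(\R\rtimes Z_{(2,3)},\R)\overset{\partial}\rightarrow H_2(\R)\overset{0}\rightarrow\cdots,$$
and so $\partial$ is an isomorphism. This is consistent with Remark \ref{re:af} (Rozhe).

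The main point requiring care is the invertibility step. It needs the identification $H_q(\R;\Z)\cong\wedge^q\R$ together with the fact that this is uniquely divisible for $q\geq1$. Only then does the action of $S_2$ by $2^q$, rather than the trivial action, kill all the coinvariants and the higher homology.

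Alternatively, one can do this explicitly with Proposition \ref{prop:H}. Take $\gamma=S_2$ and write $H=H_{S_2}$. For a $2$-cycle $c$ of $\R$, the class $[c]$ maps to $4[c]$, since $S_2$ acts by $\wedge^2(2)$. By Proposition \ref{prop:H},
$$4c-c=\partial H(c)+(\text{boundary in }\R).$$
Because $c$ is divisible, say $c=3c'$ in homology, we get that $c'$ bounds $H(c')$ modulo $\R$-chains. This gives the explicit $3$-chains. The second generator $S_3$ is what is needed to show that the bounding relative class is unique, matching the vanishing of $H_3$ above.
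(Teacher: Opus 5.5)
Your proof is correct, but it gets the two key vanishing statements by a different mechanism than the paper.

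\textbf{The paper's route.} It kills $H_q(\R;\Z)\to H_q(\R\rtimes Z_{(2,3)};\Z)$ for $q=2,3$ using the chain homotopy of Proposition \ref{prop:H}. Conjugation by $S_2$ (resp.\ $S_3$) is trivial on homology of the big group but multiplies the classes by $2^q$ (resp.\ $3^q$). So $2^q-1$ and $3^q-1$ annihilate the image, and coprimality of $3,8$ and of $7,26$ finishes. The paper then uses Hochschild--Serre only to get $H_3(\R\rtimes Z_{(2,3)};\Z)=0$, and cites Rozhe for $E^2_{1,2}$ and $E^2_{2,1}$.

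\textbf{Your route.} You show that every row $q\ge 1$ of $E^2_{p,q}=H_p(\Z^2;\wedge^q_\Z\R)$ vanishes, because $S_2-1$ acts invertibly on the $\Q$-vector space $\wedge^q_\Z\R$. This gives in one stroke both $H_3=0$ and the vanishing of the edge maps through $E^2_{0,q}=0$. It also supplies Rozhe's input in this case, and shows that $S_3$ is not needed for the proposition: $\R\rtimes\langle S_2\rangle$ already works.

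\textbf{What the paper's route buys.} It yields the integral chain-level formula of Proposition \ref{prop:psi}. The identity $2\cdot 8-5\cdot 3=1$ produces $C\langle s_1,s_2\rangle$ with $\partial C\langle s_1,s_2\rangle=\langle s_1,s_2\rangle$ exactly, without rescaling the parameters. That, not the isomorphism itself, is the real role of $S_3$.

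\textbf{Three small corrections.}
\begin{enumerate}
\item Your opening claim that vanishing in degrees $2$ and $3$ suffices is not right. Vanishing of $H_3(\R)\to H_3(\R\rtimes Z_{(2,3)})$ only makes $H_3(\R\rtimes Z_{(2,3)})\to H_3(\R\rtimes Z_{(2,3)},\R)$ injective. Injectivity of $\partial$ needs $H_3(\R\rtimes Z_{(2,3)};\Z)=0$. You do go on to prove this, and the paper's proof has the same two-stage structure.
\item In your last paragraph, it is $c=3c'$, not $c'$, that bounds $H_{S_2}(c')$ plus a chain in $\R$.
\item Your closing sentence attributing uniqueness to $S_3$ contradicts your own computation. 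Uniqueness is exactly the vanishing of $H_3(\R\rtimes Z_{(2,3)};\Z)$, which $S_2$ alone already gives.
\end{enumerate}
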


\begin{proof}
We first show that the homomorphism 
$$
\partial: H_3(\R\rtimes Z_{(2,3)},\R;\Z)\rightarrow H_2(\R;\Z)
$$
is surjective. By the exactness of the sequence \eqref{eq:les},
it is enough to show that the homomorphism $H_2(\R;\Z)\rightarrow H_2(\R\rtimes Z_{(2,3)};\Z)$
is trivial. For any $s_1,s_2\in\R$, we denote by $\langle s_1,s_2\rangle$ the
abelian $2$ cycle $(s_1,s_2)-(s_2,s_1)$ of the group $\R$ for simplicity.
Also we use the same letter for
the restriction of $\varphi^{-1}_2$ to the group $\R\rtimes Z_{(2,3)}$
which is nothing other than the conjugation by the element $S_2$.
Then we have
$$
\varphi^{-1}_2\langle s_1,s_2\rangle=\langle 2s_1,2s_2\rangle.
$$
Since $\varphi^{-1}_2$ acts on $\R\rtimes Z_{(2,3)}$ by the conjugate action 
of the element $S_2$, the left hand side, and hence the right hand side as well, is
homologous to $\langle s_1,s_2\rangle$. On the other hand,
it is easy to see that $\langle 2s_1,2s_2\rangle$ is homologous to $2^2\langle s_1,s_2\rangle$.
Hence $2^2-1=3$ times $\langle s_1,s_2\rangle$ is homologous to $0$.
Similarly as above, $\varphi^{-1}_3$ acts on $\R\rtimes Z_{(2,3)}$ by the conjugate action 
of the element $S_3$. Hence we can conclude that $3^2-1=8$ times 
$\langle s_1,s_2\rangle$ is homologous to $0$.
Since $3$ and $8$ are coprime to each other, we can finally conclude
that $\langle s_1,s_2\rangle$ is homologous to $0$ in $\R\rtimes Z_{(2,3)}$,
finishing the proof that the homomorphism $H_2(\R;\Z)\rightarrow H_2(\R\rtimes Z_{(2,3)};\Z)$
is trivial. 

Next we show that the homomorphism $H_3(\R;\Z)\rightarrow H_3(\R\rtimes Z_{(2,3)};\Z)$
is also trivial. This follows from a similar argument as above because
$2^3-1=7$ and $3^3-1=26$ are coprime to each other.

If we put these facts into the long exact sequence \eqref{eq:les}, then we are left
with the following short exact sequence
\begin{equation}
0 \rightarrow H_3(\R\rtimes Z_{(2,3)};\Z)\rightarrow
H_3(\R\rtimes Z_{(2,3)},\R;\Z)
\overset{\partial}{\rightarrow} H_2(\R;\Z)\rightarrow 0.
\label{eq:ses}
\end{equation}
It remains now to prove
$$
H_3(\R\rtimes Z_{(2,3)};\Z)=0.
$$
We consider the Hochschild-Serre spectral sequence. All the relevant $E^2$-terms 
vanish as follows.
$E^2_{0,3}=H_0(Z_{(2,3)};H_3(\R;\Z))=0$ is proved just above. 
The vanishing of both of $E^2_{1,2}=H_1(Z_{(2,3)};H_2(\R;\Z)), E^2_{2,1}=H_2(Z_{(2,3)};H_1(\R;\Z))$
follows from a general argument of Rozhe in \cite{rozhe}. The last piece
$E^2_{3,0}=H_3(Z_{(2,3)};H_0(\R;\Z))=0$ because $Z_{(2,3)}\cong\Z^2$.

This finishes the proof.
\end{proof}

Now we would like to give an explicit formula for the homomorphism
$$
\Psi:=\partial^{-1}: H_2(\R;\Z)\rightarrow H_3(\R\rtimes Z_{(2,3)},\R;\Z)
$$
at the chain level. This is the heart of our construction of analytic cycles
because it plays the role of the Reeb component in the works of Thurston and Tsuboi.
We would like to call elements of the image of the above homomorphism at the chain level
``{\it real analytic caps}" (see Definition \ref{def:ac} below).

To do this, we prepare a technical lemma. In the proof of Proposition \ref{prop:b}
the following fact was used.
For each $s_1,s_2\in\R$, the cycles $\langle 2s_1,2s_2\rangle, \langle 3s_1,3s_2\rangle$
of the abelian group $\R$ are homologous to
$4\langle s_1,s_2\rangle ,\ 9 \langle s_1,s_2\rangle$,
respectively. In fact we have the following formula which we use in our
explicit description of $\Psi$. 

\begin{lem}
In the chain group $C_*(\R;\Z)$, if we set
\begin{align*}
B_2\langle s_1,s_2\rangle&=(s_1,s_1,2s_2)-(s_1,2s_2,s_1)+(2s_2,s_1,s_1)\\
&-2\{(s_2,s_2,s_1)-(s_2,s_1,s_2)+(s_1,s_2,s_2)\}\\
B_3\langle s_1,s_2\rangle&=(s_1,2s_1,3s_2)-(s_1,3s_2,2s_1)+(3s_2,s_1,2s_1)\\
&+(s_1,s_1,3s_2)-(s_1,3s_2,s_1)+(3s_2,s_1,s_1)\\
&-3\{(s_2,2s_2,s_1)-(s_2,s_1,2s_2)+(s_1,s_2,2s_2)\\
&\hspace{7mm} +(s_2,s_2,s_1)-(s_2,s_1,s_2)+(s_1,s_2,s_2)\}
\end{align*}
then we have
\begin{align*}
\partial B_2\langle s_1,s_2\rangle&=-\langle 2s_1,2s_2\rangle+4\langle s_1,s_2\rangle\\
\partial B_3\langle s_1,s_2\rangle&=-\langle 3s_1,3s_2\rangle+9\langle s_1,s_2\rangle.
\end{align*}
\label{lem:B}
\end{lem}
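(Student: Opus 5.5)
The plan is a direct chain-level computation in the bar complex $C_*(\R;\Z)$ of the discrete abelian group $\R$, written additively. The boundary of a $3$-simplex is
$$
\partial(a,b,c)=(b,c)-(a+b,c)+(a,b+c)-(a,b).
$$
Both $B_2$ and $B_3$ are built from one elementary $3$-chain. For $a,b,c\in\R$ I set
$$
Q(a,b;c)=(a,b,c)-(a,c,b)+(c,a,b),
$$
so that $B_2$ and $B_3$ are integral combinations of $Q$'s with specific arguments.

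The first and key step is the identity
$$
\partial Q(a,b;c)=\langle a,c\rangle+\langle b,c\rangle-\langle a+b,c\rangle .
$$
To check it, expand the three boundaries. The terms $\pm(a,b)$ cancel, as do $\pm(a,b+c)=\pm(a,c+b)$ and $\pm(a+c,b)=\pm(c+a,b)$. This uses commutativity of $\R$, which is where abelianness enters. The surviving terms are
$$
(b,c)-(c,b)+(a,c)-(c,a)-(a+b,c)+(c,a+b),
$$
which is exactly the claimed expression. In other words, $Q$ is the standard chain witnessing additivity of $\langle\ ,\ \rangle$ in its first variable. I will also use the chain-level antisymmetry $\langle x,y\rangle=-\langle y,x\rangle$, which holds by the definition $\langle x,y\rangle=(x,y)-(y,x)$.

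Next I read off the decompositions
$$
B_2\langle s_1,s_2\rangle=Q(s_1,s_1;2s_2)-2\,Q(s_2,s_2;s_1),
$$
$$
B_3\langle s_1,s_2\rangle=Q(s_1,2s_1;3s_2)+Q(s_1,s_1;3s_2)-3\bigl\{Q(s_2,2s_2;s_1)+Q(s_2,s_2;s_1)\bigr\}.
$$
For $B_2$, the key identity gives
$$
\partial B_2=2\langle s_1,2s_2\rangle-\langle 2s_1,2s_2\rangle-4\langle s_2,s_1\rangle+2\langle 2s_2,s_1\rangle .
$$
By antisymmetry, $2\langle s_1,2s_2\rangle+2\langle 2s_2,s_1\rangle=0$ and $-4\langle s_2,s_1\rangle=4\langle s_1,s_2\rangle$. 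This leaves $-\langle 2s_1,2s_2\rangle+4\langle s_1,s_2\rangle$, as required.

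For $B_3$ the sums telescope. We have
$$
\partial\bigl[Q(s_1,2s_1;3s_2)+Q(s_1,s_1;3s_2)\bigr]=3\langle s_1,3s_2\rangle-\langle 3s_1,3s_2\rangle,
$$
$$
\partial\bigl[Q(s_2,2s_2;s_1)+Q(s_2,s_2;s_1)\bigr]=3\langle s_2,s_1\rangle-\langle 3s_2,s_1\rangle .
$$
Multiplying the second line by $-3$ and using antisymmetry, the two terms $3\langle s_1,3s_2\rangle$ and $3\langle 3s_2,s_1\rangle$ cancel. What remains is $-\langle 3s_1,3s_2\rangle+9\langle s_1,s_2\rangle$.

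No step is a genuine obstacle. The only place needing care is the sign bookkeeping in the six-term expansion of $\partial Q$. The one conceptual point is to make sure that the cancellations use only commutativity of addition in $\R$ and antisymmetry of $\langle\ ,\ \rangle$. Neither relation depends on passing to homology, so both formulas hold exactly at the chain level, as the lemma asserts.
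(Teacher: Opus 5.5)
Your proof is correct. The decompositions $B_2=Q(s_1,s_1;2s_2)-2Q(s_2,s_2;s_1)$ and the four-term one for $B_3$ check out, as does $\partial Q(a,b;c)=\langle a,c\rangle+\langle b,c\rangle-\langle a+b,c\rangle$ under the standard bar boundary, which is the convention consistent with Proposition~\ref{prop:H}. This is the same direct chain-level computation the paper invokes, and grouping the terms through $Q$ simply makes that computation transparent.
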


\begin{proof}
Direct computations yield the required formula.
\end{proof}

\begin{prop}
For any two elements $s_1,s_2\in\R$,
if we set
$$
C\langle s_1,s_2\rangle=\{2(H_{S_3}+B_3)-5(H_{S_2}+B_2)\}\langle s_1,s_2\rangle\in C_3(\R\rtimes\R^+;\Z),
$$
then we have
$$
\partial C\langle s_1,s_2\rangle=\langle s_1,s_2\rangle.
$$
\label{prop:psi}
\end{prop}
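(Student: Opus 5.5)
We compute $\partial C\langle s_1,s_2\rangle$ term by term, using linearity of $\partial$ together with Proposition \ref{prop:H} and Lemma \ref{lem:B}.

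First take $\gamma=S_2$, viewed as an element of $\R\rtimes\R^+$. The cycle $\sigma=\langle s_1,s_2\rangle$ is a $2$-cycle of the subgroup $\R$, so $\partial\sigma=0$. Indeed,
$$
\partial\{(s_1,s_2)-(s_2,s_1)\}=(s_2)-(s_1+s_2)+(s_1)-(s_1)+(s_2+s_1)-(s_2)=0,
$$
because $\R$ is abelian. Hence formula \eqref{eq:H} gives
$$
\partial H_{S_2}\sigma=\varphi_*\sigma-\sigma,
$$
where $\varphi$ is conjugation by $S_2$. By Lemma \ref{lem:ST} this conjugation sends $T_s$ to $T_{2s}$, so $\varphi_*\sigma=\langle 2s_1,2s_2\rangle$. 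Therefore
$$
\partial H_{S_2}\sigma=\langle 2s_1,2s_2\rangle-\langle s_1,s_2\rangle.
$$
By Lemma \ref{lem:B}, which is a chain-level identity in $C_*(\R)\subset C_*(\R\rtimes\R^+)$, we have $\partial B_2\sigma=-\langle 2s_1,2s_2\rangle+4\sigma$. Adding the two,
$$
\partial(H_{S_2}+B_2)\sigma=3\sigma .
$$

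The same argument with $\gamma=S_3$, again using Lemma \ref{lem:ST} and the formula for $B_3$ in Lemma \ref{lem:B}, gives
$$
\partial(H_{S_3}+B_3)\sigma=\langle 3s_1,3s_2\rangle-\sigma-\langle 3s_1,3s_2\rangle+9\sigma=8\sigma .
$$

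Combining these,
$$
\partial C\sigma=2\cdot 8\sigma-5\cdot 3\sigma=16\sigma-15\sigma=\sigma,
$$
which is the claim. The coefficients $2$ and $-5$ are simply a Bézout relation $2\cdot 8-5\cdot 3=1$ for the coprime integers $8$ and $3$ that appeared in Proposition \ref{prop:b}.

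Nothing in this is a real obstacle once Proposition \ref{prop:H} and Lemma \ref{lem:B} are granted. The one step that needs checking is the identification $\varphi_*\sigma=\langle 2s_1,2s_2\rangle$ at the \emph{chain} level, not just up to homology. This holds because $\varphi_*$ acts on simplices coordinatewise, so
$$
\varphi_*(s_1,s_2)=(S_2T_{s_1}S_2^{-1},\,S_2T_{s_2}S_2^{-1})=(T_{2s_1},T_{2s_2}),
$$
and similarly for $(s_2,s_1)$. If Lemma \ref{lem:B} were not assumed, the main work would be verifying its boundary formulas by expanding the face maps of each $3$-simplex in the abelian group $\R$. That verification is routine.
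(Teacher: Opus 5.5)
Your proposal is correct and follows the paper's proof essentially verbatim. Both apply \eqref{eq:H} to the cycle $\langle s_1,s_2\rangle$, giving $\partial H_{S_k}\langle s_1,s_2\rangle=\langle ks_1,ks_2\rangle-\langle s_1,s_2\rangle$, where conjugation by $S_k$ is the paper's $\varphi_k^{-1}$. Adding Lemma \ref{lem:B} then yields $3\langle s_1,s_2\rangle$ and $8\langle s_1,s_2\rangle$, and the Bézout relation $2\cdot 8-5\cdot 3=1$ finishes the computation. You also make explicit two points the paper uses only implicitly: $\partial\langle s_1,s_2\rangle=0$ kills the $H_\gamma\partial$ term, and the conjugated chain equals $\langle ks_1,ks_2\rangle$ exactly at the chain level, not merely up to homology.
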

\begin{proof}
Using Proposition \ref{prop:H}, \eqref{eq:H} and Lemma \ref{lem:B}, we compute
\begin{align*}
\partial C\langle s_1,s_2\rangle&=2\{\partial H_{S_3}+\partial B_3\}\langle s_1,s_2\rangle
-5\{\partial H_{S_2}+\partial B_2\}\langle s_1,s_2\rangle\\
&=2\{\varphi^{-1}_3\langle s_1,s_2\rangle-\langle s_1,s_2\rangle+9\langle s_1,s_2\rangle-\langle 3s_1,3s_2\rangle\}\\
&\hspace{3mm}-5\{\varphi^{-1}_2\langle s_1,s_2\rangle-\langle s_1,s_2\rangle+4\langle s_1,s_2\rangle-\langle 2s_1,2s_2\rangle\}\\
&=(2\cdot 8 -5\cdot 3)\langle s_1,s_2\rangle\\
&=\langle s_1,s_2\rangle.
\end{align*}
This completes the proof.
\end{proof}
By combining Proposition \ref{prop:b} and Proposition \ref{prop:psi}, we obtain the following.
\begin{prop}
The homomorphism 
$$
\Psi: H_2(\R;\Z)\rightarrow H_3(\R\rtimes Z_{(2,3)},\R;\Z)
$$
is given by the correspondence
$$
H_2(\R;\Z)\ni [\langle s_1,s_2\rangle]\mapsto [C\langle s_1,s_2\rangle]\in H_3(\R\rtimes Z_{(2,3)},\R;\Z)
$$
at the chain level.
\end{prop}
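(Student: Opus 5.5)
The plan is to combine Proposition \ref{prop:b} with Proposition \ref{prop:psi} and do three checks. First, $C\langle s_1,s_2\rangle$ is a chain of $\R\rtimes Z_{(2,3)}$, not just of $\R\rtimes\R^+$. Second, it is a relative cycle for the pair $(\R\rtimes Z_{(2,3)},\R)$. Third, its image under $\partial$ is $[\langle s_1,s_2\rangle]$. Since $\partial$ is an isomorphism by Proposition \ref{prop:b}, $\Psi=\partial^{-1}$ is characterized as the unique homomorphism with $\partial\circ\Psi=\mathrm{id}$. So it suffices to show that the class of $C\langle s_1,s_2\rangle$ maps to $[\langle s_1,s_2\rangle]$.

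For the first check, I would inspect the explicit formula of Proposition \ref{prop:H}. Every simplex of $H_{S_2}\langle s_1,s_2\rangle$ and $H_{S_3}\langle s_1,s_2\rangle$ has entries drawn from $S_2$ or $S_3$, the translations $T_{s_i}$, and their conjugates $S_kT_{s_i}S_k^{-1}=T_{ks_i}$ (Lemma \ref{lem:ST}). All of these lie in $\R\rtimes Z_{(2,3)}$. The chains $B_2\langle s_1,s_2\rangle$ and $B_3\langle s_1,s_2\rangle$ of Lemma \ref{lem:B} lie in $C_3(\R;\Z)$. Hence $C\langle s_1,s_2\rangle\in C_3(\R\rtimes Z_{(2,3)};\Z)$.

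For the second check, Proposition \ref{prop:psi} gives $\partial C\langle s_1,s_2\rangle=\langle s_1,s_2\rangle$, which lies in $C_2(\R;\Z)$. So $C\langle s_1,s_2\rangle$ is a relative $3$-cycle and defines a class in $H_3(\R\rtimes Z_{(2,3)},\R;\Z)$. Under the connecting map this class goes to the class of $\partial C\langle s_1,s_2\rangle$, namely $[\langle s_1,s_2\rangle]\in H_2(\R;\Z)$. Therefore $[C\langle s_1,s_2\rangle]=\partial^{-1}[\langle s_1,s_2\rangle]=\Psi[\langle s_1,s_2\rangle]$.

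It remains to make sense of ``given by the correspondence.'' The classes $[\langle s_1,s_2\rangle]$ generate $H_2(\R;\Z)\cong\wedge^2_\Z\R$. By the isomorphism, the relative class depends only on the homology class $[\langle s_1,s_2\rangle]$, not on the particular representative, so extending additively is consistent. I would remark that $C$ is defined on the generators $\langle s_1,s_2\rangle$ and extended $\Z$-linearly to abelian $2$-cycles. If two such combinations are homologous in $C_2(\R)$, the corresponding $C$-chains differ by a relative cycle whose boundary is null-homologous in $\R$. By injectivity of $\partial$ that relative cycle is zero in relative homology. I expect no real obstacle here: the only point needing care is the well-definedness on homology, and injectivity of $\partial$ from Proposition \ref{prop:b} settles it.
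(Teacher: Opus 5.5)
Your proposal is correct and takes the same approach as the paper, which simply combines Proposition \ref{prop:b} ($\partial$ is an isomorphism) with Proposition \ref{prop:psi} ($\partial C\langle s_1,s_2\rangle=\langle s_1,s_2\rangle$). The paper leaves two points implicit, and you handle both correctly: that $C\langle s_1,s_2\rangle$ actually lies in $C_3(\R\rtimes Z_{(2,3)};\Z)$ and not merely in $C_3(\R\rtimes\R^+;\Z)$, and that the relative class depends only on $[\langle s_1,s_2\rangle]$, which follows from injectivity of $\partial$.
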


\begin{definition}[Real analytic cap]
For any $2$-cycle $\langle s_1,s_2\rangle\in Z_2(\R;\Z)$, we call $C\langle s_1,s_2\rangle\in Z_3(\R\rtimes Z_{(2,3)},\R;\Z)$
a real analytic cap of $\langle s_1,s_2\rangle$.
\label{def:ac}
\end{definition}

Let $\widetilde{\mathit{PSL}}(2,\R)$ denote the universal covering group of $\mathit{PSL}(2,\R)$.
We consider $\mathit{PSL}(2,\R)$ as a subgroup of $\mathrm{Diff}_+^\omega S^1$ in a way that its subgroup
$\mathit{PSO}(2)$ is identified with $\mathit{SO}(2)\subset \mathrm{Diff}_+^\omega S^1$.
Also we consider $\widetilde{\mathit{PSL}}(2,\R)$ as a subgroup of $\mathrm{Diff}_+^\omega \R$
through the embedding $\widetilde{\mathrm{Diff}}_+^\omega S^1\subset \mathrm{Diff}_+^\omega \R$
mentioned in $\S$ 3. Recall that $\R\rtimes Z_{(2,3)}\subset \R\rtimes\R^+\subset \mathrm{Diff}_+^\omega \R$
is the subgroup generated by the translation group $\R$ and two elements $S_2,S_3\in \R^+$.
We set
$$
\widetilde{\mathit{PSL}}(2,\R)_{(2,3)}=\widetilde{\mathit{PSL}}(2,\R) *_{\R} (\R\rtimes Z_{(2,3)}).
$$
Then there is a canonical (presumably injective) homomorphism
\begin{equation}
\rho: \widetilde{\mathit{PSL}}(2,\R)_{(2,3)}\rightarrow \mathrm{Diff}_+^\omega \R\subset \Gamma_H.
\label{eq:rho}
\end{equation}

We consider the following commutative diagram.
\begin{equation}
\begin{CD}
 H_2(\mathit{PSL}(2,\R),\mathit{PSO}(2);\Z) @>{\mu}>{\text{Gysin map}}> H_3(\widetilde{\mathit{PSL}}(2,\R),\R;\Z) @. \\
 @V{\partial}VV   @V{\partial}VV  @.\\ 
  H_1(\mathit{PSO}(2);\Z) @>{\mu}>{\text{Gysin map}}> H_2(\R;\Z) @>{\Psi=\partial^{-1}}>\cong> H_3(\R\rtimes Z_{(2,3)},\R;\Z).
\end{CD}
\label{eq:cdg}
\end{equation}
Here the upper homomorphism $\mu$ is a part of the {\it{relative}} Gysin exact sequence of the pair of
the central extension $(\widetilde{\mathit{PSL}}(2,\R),\R)\rightarrow (\mathit{PSL}(2,\R),\mathit{PSO}(2))$ and the lower
$\mu$ is its image under the boundary map. In both cases, a homology class $\sigma$ on the left hand side is
represented by a foliated $S^1$-bundle with prescribed monodromy and its image under the homomorphism $\mu$,
which we denote by $\tilde{\sigma}$,
is represented by the associated total space which is a foliated $S^1$-product
with canonical trivialization as a smooth $S^1$-bundle.
Based on this, we make the following definition.

\begin{definition}
We define a homomorphism
$$
\Phi: H_2(\mathit{PSL}(2,\R),\mathit{PSO}(2);\Z)\cong \R \rightarrow H_3(\widetilde{\mathit{PSL}}(2,\R)_{(2,3)};\Z)\rightarrow H_3(\Gamma_H;\Z)
$$
as follows. For any relative homology class $\sigma\in H_2(\mathit{PSL}(2,\R),\mathit{PSO}(2);\Z)$
we set
$$
\Phi(\sigma)=\tilde{\sigma}-\Psi(\partial\tilde{\sigma})\in H_3(\widetilde{\mathit{PSL}}(2,\R)_{(2,3)};\Z)\
\text{and then in}\  H_3(\Gamma_H;\Z).
$$
\label{def:Phi}
\end{definition}

Here the expression $\tilde{\sigma}-\Psi(\partial\tilde{\sigma})$ is a somewhat simplified notation
and its precise meaning will be given in the proof of the following Proposition.
\begin{prop}
The above definition is well defined.
\label{prop:Phi}
\end{prop}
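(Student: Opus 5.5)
We will make the expression $\tilde{\sigma}-\Psi(\partial\tilde{\sigma})$ precise at the chain level and then check that the resulting class in $H_3(\widetilde{\mathit{PSL}}(2,\R)_{(2,3)};\Z)$ does not depend on any of the choices.

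\emph{The construction.} Take a relative $3$-cycle $\tilde c\in C_3(\widetilde{\mathit{PSL}}(2,\R))$ representing $\tilde\sigma=\mu(\sigma)\in H_3(\widetilde{\mathit{PSL}}(2,\R),\R;\Z)$. Its boundary $\partial\tilde c=z$ is a $2$-cycle of the subgroup $\R$. Since $H_2(\R;\Z)\cong\wedge^2_\Z\R$, after adding to $\tilde c$ a chain of $C_3(\R)$ we may assume that
\[
z=\sum_j \langle s_1^{(j)},s_2^{(j)}\rangle .
\]
Such a modification does not change the relative class. We then set
\[
c=\tilde c-\sum_j C\langle s_1^{(j)},s_2^{(j)}\rangle\in C_3(\widetilde{\mathit{PSL}}(2,\R)_{(2,3)}).
\]
By Proposition~\ref{prop:psi}, $\partial c=0$, so $c$ is an absolute cycle. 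We define $\Phi(\sigma)=[c]$, and compose with $\rho_*$ to land in $H_3(\Gamma_H;\Z)$.

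\emph{Independence of the choices.} There are three kinds of choice.

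\begin{enumerate}
\item \emph{The cycle $\tilde c$.} Two relative representatives satisfy $\tilde c'-\tilde c=\partial D+e$ with $D\in C_4(\widetilde{\mathit{PSL}}(2,\R))$ and $e\in C_3(\R)$. Then $z'-z=\partial e$, so $z'$ and $z$ are homologous in $\R$. The difference $c'-c$ is therefore homologous to
\[
e-\bigl(\text{cap of }z'\bigr)+\bigl(\text{cap of }z\bigr).
\]
This chain lies entirely in $\R\rtimes Z_{(2,3)}$ and is a cycle there.
\item \emph{The decomposition of $z$.} Different ways of writing the same $2$-cycle $z$ as a sum of $\langle s_1,s_2\rangle$ also give differences that are cycles in $C_3(\R\rtimes Z_{(2,3)})$.
\item \emph{The cap.} One could use any chain $C'$ in $\R\rtimes Z_{(2,3)}$ with $\partial C'=z$ in place of $C$. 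The difference of two such caps is again a cycle of $\R\rtimes Z_{(2,3)}$.
\end{enumerate}

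In every case the ambiguity is a $3$-cycle of $\R\rtimes Z_{(2,3)}$. The key input is the vanishing
\[
H_3(\R\rtimes Z_{(2,3)};\Z)=0,
\]
which was established in the proof of Proposition~\ref{prop:b}. Equivalently, one can use the injectivity of $\partial$ in the short exact sequence \eqref{eq:ses}, which says that the relative class of a cap is determined by its boundary class. Either way, every ambiguity is a boundary in $\R\rtimes Z_{(2,3)}$, hence a boundary in the amalgamated product.

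\emph{Additivity.} Taking sums of chosen representatives shows at once that $\Phi$ is a homomorphism.

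\emph{Main obstacle.} The delicate step is the bookkeeping in item~1. One must check that the correction $e$ coming from $C_3(\R)$ and the change of boundary cycle together form a genuine cycle inside $\R\rtimes Z_{(2,3)}$. This requires that the image of $C_*(\R)$ in the two factors is compatible, which holds because the amalgamation is along $\R$. One must also check that $\mu(\sigma)$ itself is well defined as a relative class, which we take from the relative Gysin sequence. Once the ambiguity is placed in $Z_3(\R\rtimes Z_{(2,3)})$, the vanishing of $H_3$ finishes the argument.
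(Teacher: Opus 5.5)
Your proof is correct. The construction step matches the paper's, but your argument for independence of the choices takes a different route.

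\textbf{The paper's route.} It does not track choices at the chain level. It uses three inputs:
\begin{itemize}
\item the long exact sequence of the pair $(\widetilde{\mathit{PSL}}(2,\R)_{(2,3)},\R)$;
\item the vanishing of $H_3(\R;\Z)\to H_3(\widetilde{\mathit{PSL}}(2,\R)_{(2,3)};\Z)$;
\item excision for the amalgamated product.
\end{itemize}
Together these identify $H_3(\widetilde{\mathit{PSL}}(2,\R)_{(2,3)};\Z)$ with the kernel of $\partial$ on $H_3(\widetilde{\mathit{PSL}}(2,\R),\R;\Z)\oplus H_3(\R\rtimes Z_{(2,3)},\R;\Z)$. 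The absolute class $[c-d]$ is then determined by the pair of relative classes $(\tilde\sigma,\Psi(\partial\tilde\sigma))$, both of which are fixed.

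\textbf{Your route.} You show directly that any two constructions differ by a boundary in the amalgam plus a $3$-cycle supported in $\R\rtimes Z_{(2,3)}$. You then kill that cycle using $H_3(\R\rtimes Z_{(2,3)};\Z)=0$.

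\textbf{What each approach buys.} Your version is more elementary and more explicit. It needs no excision or Mayer--Vietoris for amalgams. It also replaces the paper's informal step of ``taking subdivisions'' to force $\partial c=\partial d$ with an honest algebraic correction $e\in C_3(\R)$, and it writes down an explicit cycle built from the caps $C\langle s_1,s_2\rangle$. The paper's version yields a structural statement: the absolute class depends only on the two relative classes. That is the form reused in Propositions \ref{prop:Phi2} and \ref{prop:Phin}.

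\textbf{A caveat for reuse.} Your item~3 uses the vanishing of $H_3$ of the group containing the cap. In the setting of Proposition \ref{prop:Phin}, $H_3(\R\rtimes Z_{(2,3,n)};\Z)\cong\Z$, so the cap is no longer determined up to homology by its boundary. To adapt your argument there, you would have to insist that caps lie in $\R\rtimes Z_{(2,3)}$. The paper handles this by fixing the relative class $\Psi(\partial\tilde\sigma)$.
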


\begin{proof}
Choose a chain $c\in C_3(\widetilde{\mathit{PSL}}(2,\R);\Z)$ which represents the relative cycle
$\tilde{\sigma}\in H_3(\widetilde{\mathit{PSL}}(2,\R),\R;\Z)$. This means that $\partial c\in C_2(\R;\Z)$,
which is obviously a cycle of the chain complex $C_*(\R;\Z)$, represents 
$\partial\tilde{\sigma}=[\partial c]\in H_2(\R;\Z)$.
Now choose a chain $d\in C_3(\R\rtimes Z_{(2,3)};\Z)$ which represents the relative cycle
$\Psi(\partial \tilde{\sigma})\in H_3(\R\rtimes Z_{(2,3)},\R;\Z)$. This means that
$\partial d\in C_2(\R;\Z)$, which is obviously a cycle of the chain complex $C_*(\R;\Z)$, represents 
$\partial\Psi(\partial \tilde{\sigma})\in H_2(\R;\Z)$. By the definition of the homomorphism $\Phi$,
we have $\partial\Psi(\partial \tilde{\sigma})=\partial \tilde{\sigma}$.
Thus both of the two chains $\partial c, \partial d$ represent the same class $\partial\tilde{\sigma}\in H_2(\R;\Z)$.

Here we look into this class more precisely. We have the identity $\partial\tilde{\sigma}=\widetilde{\partial\sigma}$.
Geometrically, $\partial\sigma$ is represented by a foliated $S^1$-bundle over the
circle $S^1$ whose 
monodromy is an element of the rotation group $\mathit{PSO}(2)$. Therefore $\widetilde{\partial\sigma}$ is a
torus equipped with a linear foliation. As an element of $H_2(\R;\Z)\cong\wedge^2_\Z \R$,
it is represented by $1\wedge s\ =[(1,s)-(s,1)]$ where $s\in\R$ denotes the parameter of $\sigma$. 
(If $s$ is an integer, then $1\wedge s=0$.)
By taking appropriate subdivisions of the chains $c,d$,
we may assume that the two chains $\partial c, \partial d$, which are cell decompositions of a torus,
coincide to each other. Now we consider $c, d$ as $3$-chains of the group 
$\widetilde{\mathit{PSL}}(2,\R)_{(2,3)}=\widetilde{\mathit{PSL}}(2,\R)*_\R (\R\rtimes Z_{(2,3)})$ through the inclusions
$\widetilde{\mathit{PSL}}(2,\R), \R\rtimes Z_{(2,3)}\subset \widetilde{\mathit{PSL}}(2,\R)_{(2,3)}$. Then, since $\partial c=\partial d$
(as cell decompositions of a torus) by the assumption, $c-d$ is a $3$-cycle of the group $\widetilde{\mathit{PSL}}(2,\R)_{(2,3)}$.
Our homology class $\tilde{\sigma}-\Psi(\partial\tilde{\sigma})$ is defined to be the class $[c-d]$.

We check that the above class is determined independent of various choices of chains in the above construction.
This is because of the following reason. The long exact sequence of the homology of the pair
$(\widetilde{\mathit{PSL}}(2,\R)_{(2,3)},\R)=(\widetilde{\mathit{PSL}}(2,\R)*_\R \R\rtimes Z_{(2,3)},\R)$ is given by
$$
\cdots\rightarrow H_3(\R;\Z)\overset{\text{$0$-map}}{\rightarrow}
H_3(\widetilde{\mathit{PSL}}(2,\R)_{(2,3)};\Z)
\rightarrow H_3(\widetilde{\mathit{PSL}}(2,\R)_{(2,3)},\R;\Z)\overset{\partial}{\rightarrow} H_2(\R;\Z) \rightarrow \cdots.
$$
Here the fact that the first homomorphism is the $0$-map was proved in the proof of Proposition \ref{prop:b}.
By the excision, we have
$$
H_3(\widetilde{\mathit{PSL}}(2,\R)_{(2,3)},\R;\Z)\cong H_3(\widetilde{\mathit{PSL}}(2,\R),\R;\Z)\oplus
H_3(\R\rtimes Z_{(2,3)},\R;\Z).
$$
Therefore, we can conclude that
\begin{equation}
\begin{split}
H_3(\widetilde{\mathit{PSL}}&(2,\R)_{(2,3)};\Z)\cong \\
\mathrm{Ker}&\left(\partial: H_3(\widetilde{\mathit{PSL}}(2,\R),\R;\Z)\oplus
H_3(\R\rtimes Z_{(2,3)},\R;\Z)
\rightarrow H_2(\R;\Z)\right).
\end{split}
\label{eq:ker}
\end{equation}
The boundary operator $\partial: H_3(\widetilde{\mathit{PSL}}(2,\R),\R;\Z)\oplus
H_3(\R\rtimes Z_{(2,3)},\R;\Z)
\rightarrow H_2(\R;\Z)$
is given by
$$
\partial([c],[d])=[\partial(c)-\partial(d)]\in H_2(\R;\Z).
$$
Now in the Definition \ref{def:Phi}, the pair $(\tilde{\sigma},\Psi(\partial\tilde{\sigma}))$
belongs to the kernel of the above boundary operator $\partial$. Hence the difference
$\tilde{\sigma}-\Psi(\partial\tilde{\sigma})$ defines a unique element
in \\$H_3(\widetilde{\mathit{PSL}}(2,\R)_{(2,3)};\Z)$ by Equality \eqref{eq:ker}.
This completes the proof.
\end{proof}

To proceed to the proof of Theorem \ref{th:main}, we replace $\mathit{PSL}(2,\R)$ with its double
cover $\mathit{SL}(2,\R)$ in Definition \ref{def:Phi}. Here we recall that Tsuboi \cite{tsuboi84} proved
that 
$$
H_2(\mathit{PSL}(2,\R)^{(n)},\mathit{SO}(2);\Z)\cong\R \quad (n=1,2,\cdots)
$$
where the meaning of the parameter $s\in\R$ for $n>1$ is similar to the case
$n=1$. Namely, in the description of the meaning of $s$ for the case $n=1$ 
given in the beginning of this section, we just replace $\mathit{PSL}(2,\R)$ by $\mathit{PSL}(2,\R)^{(n)}$.
Observe that the center of $\widetilde{\mathit{PSL}}(2,\R)^{(n)}$ is the same as 
the center $\Z$ of $\widetilde{\mathit{PSL}}(2,\R)$ for all $n$.
Also we have the following commutative diagram
$$
\begin{CD}
\widetilde{\mathit{PSL}}(2,\R)  @>{\subset}>> \widetilde{\mathrm{Diff}}_+^\omega S^1 @>{\subset}>>\Gamma_H \\
 @V{\varphi_2}V{\cong}V   @V{\varphi_2}V{}V  @V{\varphi_2}V{}V\\ 
\widetilde{\mathit{SL}}(2,\R) @>{\subset}>> \widetilde{\mathrm{Diff}}_+^\omega S^1 @>{\subset}>> \Gamma_H.
\end{CD}
$$
Namely the universal cover $\widetilde{\mathit{SL}}(2,\R)$ of $\mathit{SL}(2,\R)$, considered as a subgroup of
$\widetilde{\mathrm{Diff}}_+^\omega S^1$, is equal to the image under $\varphi_2$ of $\widetilde{\mathit{PSL}}(2,\R)$
(cf. \cite{morita}). Since the action of $\varphi_2$ is equal to the conjugation by the element $S_2^{-1}$,
we can conclude that $\widetilde{\mathit{SL}}(2,\R)$ is contained in the group $\widetilde{\mathit{PSL}}(2,\R)_{(2,3)}$.
Thus we obtain the following definition.

\begin{definition}
We define a homomorphism
$$
\Phi_2: H_2(\mathit{SL}(2,\R),\mathit{SO}(2);\Z)\cong \R \rightarrow H_3(\widetilde{\mathit{PSL}}(2,\R)_{(2,3)};\Z)\rightarrow H_3(\Gamma_H;\Z)
$$
as follows. For any relative homology class $\sigma\in H_2(\mathit{SL}(2,\R),\mathit{SO}(2);\Z)$
we set
$$
\Phi_2(\sigma)=\tilde{\sigma}-\Psi(\partial\tilde{\sigma})\in H_3(\widetilde{\mathit{PSL}}(2,\R)_{(2,3)};\Z)\
\text{and then in}\  H_3(\Gamma_H;\Z).
$$
\label{def:Phi2}
\end{definition}

\begin{prop}
The above definition is well defined.
\label{prop:Phi2}
\end{prop}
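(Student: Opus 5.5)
The plan is to follow the proof of Proposition \ref{prop:Phi} step by step. The one new ingredient is checking that everything now lives inside $\widetilde{\mathit{PSL}}(2,\R)_{(2,3)}$ via the subgroup $\widetilde{\mathit{SL}}(2,\R)=\varphi_2(\widetilde{\mathit{PSL}}(2,\R))=S_2^{-1}\widetilde{\mathit{PSL}}(2,\R)S_2$.

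First, we take $\sigma\in H_2(\mathit{SL}(2,\R),\mathit{SO}(2);\Z)$ and its Gysin image $\tilde\sigma\in H_3(\widetilde{\mathit{SL}}(2,\R),\R;\Z)$. The central extension $\widetilde{\mathit{SL}}(2,\R)\to \mathit{SL}(2,\R)$ has the same center $\Z$ and translation subgroup $\R$. We must make sure this $\R$ is literally the translation group $\R\subset\R\rtimes Z_{(2,3)}$ and not merely an isomorphic copy. This holds because $\varphi_2$ restricted to $\R$ is $T_s\mapsto T_{s/2}$ by Lemma \ref{lem:ST}, which is an automorphism of $\R$ onto itself. So the lifted rotation subgroup of $\widetilde{\mathit{SL}}(2,\R)$ is again the full group $\R$ of translations.

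Second, we choose a relative cycle $c\in C_3(\widetilde{\mathit{SL}}(2,\R);\Z)$ representing $\tilde\sigma$, so that $\partial c\in Z_2(\R;\Z)$ represents $\partial\tilde\sigma$. Geometrically this is a linear foliation on the torus, with class $1\wedge s'$ for the appropriate parameter. Then, using Propositions \ref{prop:b} and \ref{prop:psi}, we choose $d\in C_3(\R\rtimes Z_{(2,3)};\Z)$, for instance an explicit real analytic cap $C\langle\cdot,\cdot\rangle$, with $[\partial d]=[\partial c]$ in $H_2(\R;\Z)$. After subdividing, or equivalently after adding to $d$ a $3$-chain in $C_3(\R;\Z)$ bounding $\partial c-\partial d$, we get $\partial c=\partial d$. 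Then $c-d$ is a $3$-cycle of the group generated by $\widetilde{\mathit{SL}}(2,\R)$ and $\R\rtimes Z_{(2,3)}$, and this group sits inside $\widetilde{\mathit{PSL}}(2,\R)_{(2,3)}$.

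Third, for independence of choices, we use the amalgam $\widetilde{\mathit{SL}}(2,\R)*_\R(\R\rtimes Z_{(2,3)})$. Its Mayer–Vietoris/excision argument is identical to the one giving \eqref{eq:ker}, since $H_3(\R;\Z)\to H_3(\R\rtimes Z_{(2,3)};\Z)$ is zero and $\partial$ on the cap side is an isomorphism (Proposition \ref{prop:b}). So the class $[c-d]$ in $H_3(\widetilde{\mathit{SL}}(2,\R)*_\R(\R\rtimes Z_{(2,3)});\Z)$ is uniquely determined by $\sigma$. We then push this class forward along the natural homomorphism into $\widetilde{\mathit{PSL}}(2,\R)_{(2,3)}$ and then into $\Gamma_H$. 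That homomorphism exists because $\widetilde{\mathit{SL}}(2,\R)=S_2^{-1}\widetilde{\mathit{PSL}}(2,\R)S_2$ with $S_2\in\R\rtimes Z_{(2,3)}$, and because the two inclusions agree on $\R$.

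The main obstacle is this last compatibility. $\widetilde{\mathit{SL}}(2,\R)$ is only a \emph{conjugate} of the factor $\widetilde{\mathit{PSL}}(2,\R)$ in the amalgam, so the amalgam for $\widetilde{\mathit{SL}}$ is not a priori a subgroup of $\widetilde{\mathit{PSL}}(2,\R)_{(2,3)}$. I would handle this by defining the map on the $\widetilde{\mathit{SL}}$ factor as $x\mapsto S_2^{-1}\varphi_2^{-1}(x)S_2$ and on the other factor as the identity. One must then check that these agree on $\R$: for $x=T_t\in\R$ we have $\varphi_2^{-1}(T_t)=T_{2t}$, and $S_2^{-1}T_{2t}S_2=T_t$ by Lemma \ref{lem:ST}, so they do. Well-definedness is only needed for the homology class and not for the injectivity of $\rho$, so this suffices.
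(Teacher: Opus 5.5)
Your proposal is correct and is essentially the paper's proof. The paper also reruns the argument of Proposition \ref{prop:Phi} with $\widetilde{\mathit{SL}}(2,\R)$ in place of $\widetilde{\mathit{PSL}}(2,\R)$, justified by the fact that $\widetilde{\mathit{SL}}(2,\R)=\varphi_2(\widetilde{\mathit{PSL}}(2,\R))$ is the conjugate of the factor $\widetilde{\mathit{PSL}}(2,\R)$ by $S_2\in\R\rtimes Z_{(2,3)}$. The paper simply asserts the equality $\widetilde{\mathit{SL}}(2,\R)*_\R(\R\rtimes Z_{(2,3)})=\widetilde{\mathit{PSL}}(2,\R)_{(2,3)}$, whereas you construct the comparison homomorphism explicitly and check it on $\R$ via Lemma \ref{lem:ST}; that homomorphism is in fact an isomorphism, so your ``main obstacle'' never bites, and replacing the subdivision step by adding a $3$-chain of $\R$ is a slightly cleaner version of the same step.
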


\begin{proof}
Observe that
$\widetilde{\mathit{SL}}(2,\R)\subset \widetilde{\mathit{PSL}}(2,\R)_{(2,3)}$ and furthermore that
$$
\widetilde{\mathit{SL}}(2,\R)*_\R (\R\rtimes Z_{(2,3)})=\widetilde{\mathit{PSL}}(2,\R)_{(2,3)}.
$$
Then the proof can be given exactly the same way as that of Proposition \ref{prop:Phi}
by replacing $\widetilde{\mathit{PSL}}(2,\R)$ with $\widetilde{\mathit{SL}}(2,\R)$.
\end{proof}

\begin{prop}
For any parameter $s\in\R$, we have the following identity
$$
\Phi_2(s)=4 \Phi(s)
$$
where $\Phi(s)$ denotes $\Phi(\sigma)$ such that the parameter of $\sigma$ is equal to $s$ and similarly
for $\Phi_2$.
\label{prop:comp}
\end{prop}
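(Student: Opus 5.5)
The plan is to compare the two constructions through the outer endomorphism $\varphi_2$. By \eqref{eq:varphi} and the commutative diagram above, $\varphi_2$ is conjugation by $S_2^{-1}$ and maps $\widetilde{\mathit{PSL}}(2,\R)$ isomorphically onto $\widetilde{\mathit{SL}}(2,\R)$. By Lemma \ref{lem:ST} it sends $T_t$ to $T_{t/2}$, and it preserves $\R\rtimes Z_{(2,3)}$. So $\varphi_2$ restricts to an automorphism of $\widetilde{\mathit{PSL}}(2,\R)_{(2,3)}$ which is inner, namely conjugation by the element $S_2^{-1}$ of that group. Hence $\varphi_2$ acts as the identity on $H_3(\widetilde{\mathit{PSL}}(2,\R)_{(2,3)};\Z)$; this is the chain homotopy of Proposition \ref{prop:H}. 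The proof then reduces to identifying $\varphi_2(\Phi(s))$, computed at the chain level, with $\frac14\Phi_2(s)$, or more precisely to showing that $4\varphi_2(\Phi(s))=\Phi_2(s)$.

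\emph{Step 1 (boundary bookkeeping).} Represent $\Phi(s)$ as $[c-d]$, as in the proof of Proposition \ref{prop:Phi}, with $\partial c=\partial d$ a cycle representing $1\wedge s\in\wedge^2_\Z\R$. Applying $\varphi_2$ gives the cycle $\varphi_2(c)-\varphi_2(d)$. Here $\varphi_2(c)$ is a relative $3$-chain of $(\widetilde{\mathit{SL}}(2,\R),\R)$ and $\varphi_2(d)$ is a relative $3$-chain of $(\R\rtimes Z_{(2,3)},\R)$, both with boundary representing $\frac12\wedge\frac s2$. In $\wedge^2_\Z\R$ we have $4(\frac12\wedge\frac s2)=1\wedge s$. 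Since $\Psi$ is an isomorphism (Proposition \ref{prop:b}), $4[\varphi_2(d)]=\Psi(1\wedge s)$ in $H_3(\R\rtimes Z_{(2,3)},\R;\Z)$. So the cap parts match automatically.

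\emph{Step 2 (the $\widetilde{\mathit{SL}}$ part).} By the description \eqref{eq:ker}, which rests on the vanishing $H_3(\R;\Z)\to H_3(\widetilde{\mathit{PSL}}(2,\R)_{(2,3)};\Z)$, a class of $H_3(\widetilde{\mathit{PSL}}(2,\R)_{(2,3)};\Z)$ is determined by its pair of relative components. It therefore remains to show that $4[\varphi_2(c)]=\tilde\sigma_2$ in $H_3(\widetilde{\mathit{SL}}(2,\R),\R;\Z)$, where $\sigma_2\in H_2(\mathit{SL}(2,\R),\mathit{SO}(2);\Z)$ has parameter $s$.

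To prove this, transport the whole Gysin picture by the isomorphism $\varphi_2$. The relative group $H_2(\mathit{PSL}(2,\R),\mathit{PSO}(2);\Z)$ corresponds to the relative homology of $\mathit{SL}(2,\R)$ modulo the quotient by the central element $T_{1/2}$. The class $\varphi_2(\tilde\sigma)$ is then the Gysin image with respect to the fibre loop $T_{1/2}$, whereas $\tilde\sigma_2$ uses the fibre loop $T_1$. Comparing the two Gysin maps via the fibrewise double covering accounts for one factor $2$. The other factor $2$ comes from the rescaling of the translation parameter, $s\mapsto s/2$, together with the $\R$-linearity of Tsuboi's isomorphisms $H_2(\mathit{PSL}(2,\R)^{(n)},\mathit{SO}(2);\Z)\cong\R$.

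To make this precise, I would compare boundaries in $H_2(\R;\Z)$ and use the injectivity of $\partial$ on the relevant part of $H_3(\widetilde{\mathit{SL}}(2,\R),\R;\Z)$ coming from Gysin images. This reduces the identity to the equality $4(\frac12\wedge\frac s2)=1\wedge s$ already noted in Step 1.

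\emph{Main obstacle.} I expect the main difficulty to be Step 2: checking carefully that the normalisation of the parameter $s$ for $\mathit{SL}(2,\R)=\mathit{PSL}(2,\R)^{(2)}$ (translation length measured in $\R\subset\widetilde{\mathrm{Diff}}^\omega_+S^1$) makes $\varphi_2$ of a parameter-$s$ class have total weight exactly $\frac14$ of the parameter-$s$ class for $\mathit{SL}(2,\R)$, rather than $\frac12$. I would first test this on closed surfaces, where $s$ is the Euler number and the factors can be read off directly from the Milnor–Wood type relation between the Euler classes of $\mathit{PSL}$ and $\mathit{SL}$ bundles. I would then extend to general $s$ by $\R$-linearity.
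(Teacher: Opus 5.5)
Your outline is the paper's. The paper's proof is just the boundary identity $\varphi_2(1\wedge s)=\frac14\,1\wedge s$, from which it reads off $4\varphi_2(\Phi(s))=\Phi_2(s)$, together with the remark that $\varphi_2$ is inner on $\widetilde{\mathit{PSL}}(2,\R)_{(2,3)}$. Your Step~1 and your use of \eqref{eq:ker} are correct. You are also right that the boundary identity alone only settles the cap component.

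\textbf{The gap: your proposed way of making Step~2 precise fails.} Write $\sigma_s$ and $\sigma_{2,s}$ for the classes of parameter $s$ for $\mathit{PSL}(2,\R)$ and $\mathit{SL}(2,\R)$.

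\emph{Injectivity of $\partial$ is false.} The boundary map is not injective on the Gysin image of $H_2(\mathit{SL}(2,\R),\mathit{SO}(2);\Z)$ in $H_3(\widetilde{\mathit{SL}}(2,\R),\R;\Z)$. On the one hand, $\partial\tilde\sigma_{2,s}=1\wedge s$ vanishes for every $s\in\Q$, since $1\wedge\frac pq=pq\,(\frac1q\wedge\frac1q)=0$. On the other hand, $[S^1(\rho^s)]=\mu_{\mathit{SL}}(\rho^s_*[\Sigma_2])$, of parameter $-1$, is non-zero in the relative group. Indeed, the continuous volume class of $\widetilde{\mathit{SL}}(2,\R)$ vanishes on $\R$, yet evaluates on this class to the non-zero volume of the closed manifold $S^1(\rho^s)$. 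So comparing boundaries cannot reduce the claim to $4(\frac12\wedge\frac s2)=1\wedge s$.

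\emph{The fallback also fails.} Both $s\mapsto\tilde\sigma_{2,s}$ and $s\mapsto 4\varphi_2(\tilde\sigma_s)$ are merely additive maps out of $\R$. Agreement on closed surfaces (integer $s$) therefore says nothing about irrational $s$; there is no $\R$-linearity to extend by.

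\textbf{The fix is your own double-covering heuristic, carried out directly for each $s$.} No injectivity is needed.
\begin{enumerate}
\item Represent $\sigma_s$ by $\rho\colon\pi_1\Sigma\to\mathit{PSL}(2,\R)$, with $\Sigma$ a surface with one boundary circle, and choose a lift $\tilde\rho$. Then $\varphi_2(\tilde\sigma_s)$ is represented by $(\Sigma\times S^1,\partial\Sigma\times S^1)$ via $(\gamma,k)\mapsto\varphi_2(\tilde\rho(\gamma))\,T_{k/2}$.
\item Projecting $\varphi_2\circ\tilde\rho$ to $\mathit{SL}(2,\R)$ gives a class $\sigma'\in H_2(\mathit{SL}(2,\R),\mathit{SO}(2);\Z)$ of parameter $s/2$. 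This is because the boundary loop goes to $\varphi_2(T_s)=T_{s/2}$.
\item The class $\tilde\sigma'$ is represented by $(\gamma,k)\mapsto\varphi_2(\tilde\rho(\gamma))\,T_k$. This is the previous map precomposed with the fibrewise squaring of $(\Sigma\times S^1,\partial\Sigma\times S^1)$, which has degree $2$. Hence $\tilde\sigma'=2\varphi_2(\tilde\sigma_s)$.
\item Tsuboi's parameter is an additive isomorphism, so $\sigma_{2,s}=2\sigma'$. Therefore $\tilde\sigma_{2,s}=2\tilde\sigma'=4\varphi_2(\tilde\sigma_s)$. Only multiplication by the integer $2$ is used, not $\R$-linearity.
\end{enumerate}
Combined with Step~1 and \eqref{eq:ker}, this gives $4\varphi_2(\Phi(s))=\Phi_2(s)$. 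Your inner-automorphism step then finishes the proof.
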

\begin{proof}
Since
$$
\varphi_2(1\wedge s)=\frac{1}{2}\wedge \frac{s}{2}=\frac{1}{4} 1\wedge s,
$$
we have
$$
4\varphi_2(\Phi(s))=\Phi_2(s).
$$
On the other hand, $\varphi_2$ induces an inner automorphism of the target group $\widetilde{\mathit{PSL}}(2,\R)_{(2,3)}$.
Therefore $\varphi_2(\Phi(s))=\Phi(s)$. The required result follows from this.
\end{proof}

\vspace{2mm}
\noindent
{\it Proof of Theorem \ref{th:main}}\\
In \cite{tsuboi84}, Tsuboi defined two homomorphisms
\begin{align*}
&t_1: H_2(\mathit{PSL}(2,\R),\mathit{PSO}(2);\Z)\cong \R \rightarrow H_3(B\overline{\Gamma}^\infty_1;\Z)\\
&t_2: H_2(\mathit{SL}(2,\R),\mathit{SO}(2);\Z)\cong \R \rightarrow H_3(B\overline{\Gamma}^\infty_1;\Z)
\end{align*}
and proved the following facts.
\begin{align}
(i)&\, \text{$\mathrm{Im}\, t_1$ is a direct summand of the target and $gv$ is an isomorphism on it}\\
(ii)&\, \text{$t_2(s)=4t_1(s)$ for any $s\in\R$ so that $\mathrm{Im}\, t_2=\mathrm{Im}\, t_1$}.
\label{eq:tsu}
\end{align}
Our task is to compare our homomorphism $\Phi$ (see Definition \ref{def:Phi})
with Tsuboi's homomorphism $t_1$ mentioned above. If we can show directly that the homomorphism $\Phi$
followed by the projection $H_3(\Gamma_H;\Z)\rightarrow H_3(B\overline{\Gamma}^\infty_1;\Z)$ coincides with $t_1$,
then we are done. In other words, we have to show that the $\Gamma_1$-structure of our 
real analytic chain $C(\sigma)$ which bounds linear foliations on the torus is homotopic
to that of Thurston's and Tsuboi's constructions using Reeb foliations.
This seems to be a rather delicate problem. Instead of showing this directly, we adopt an alternate method
where we compare the two constructions {\it{via}} Thurston's real analytic cycles as follows.

For each parameter $s\in\R$, we consider the difference $\Phi_2(s)-\Phi(s)$. At the chain level,
the added $3$-chains $C\langle 1, s\rangle$ of the group $\R\rtimes Z_{(2,3)}$ cancel so that
only chains of the two groups $\widetilde{\mathit{PSL}}(2,\R),\widetilde{\mathit{SL}}(2,\R)$ remain which make a
cycle of the group $\widetilde{\mathit{PSL}}(2,\R)*_{\R} \widetilde{\mathit{SL}}(2,\R)$. This is exactly
the cycle that Thurston adopted to show that the homomorphism \eqref{eq:GV} is surjective.
On the other hand, in the corresponding difference $t_2(s)-t_1(s)$ in Tsuboi's construction,
the added part containing Reeb foliations also cancel and the remaining cycle is Thurston's
cycle projected to $H_3(B\overline{\Gamma}^\infty_1;\Z)$.

Now by Proposition \ref{prop:comp}, $\Phi_2(s)-\Phi(s)=3\Phi(s)$. Similarly by \eqref{eq:tsu},
$t_2(s)-t_1(s)=3t_1(s)$. It follows that $3\Phi(s)\in H_3(B\overline{\Gamma}^\omega_1;\Z)$
projects to $3t_1(s)\in H_3(B\overline{\Gamma}^\infty_1;\Z)$.
Since both of these two classes are members of uniquely divisible groups 
$\mathrm{Im}\, \Phi, \mathrm{Im}\, t_1$, we can now conclude that $\Phi(s)$ projects to
$t_1(s)$ for all $s$. This is because in any uniquely divisible group,
the equality $3x=3y$ implies $x=y$. This completes the proof.
\qed

\begin{remark}
Thurston computed the Godbillon-Vey invariant of his analytic cycle $t_2(s)-t_1(s)$ to be 
equal to $3 gv(t_1(s))$ as described by Brooks in Appendix of \cite{bott}.
The fact that $gv(\Phi(s))=gv(t_1(s))$, which follows from Theorem \ref{th:main}, might be 
shown directly using the formula for the Godbillon-Vey class given in
\cite{mt} because the Godbillon-Vey cocycle given there vanishes constantly on the group $\R\rtimes\R^+$.
However we did not develop technical details for this.
\end{remark}

\section{Alternative proof of Theorem \ref{th:main}}

In this section, we give an alternative proof of our main result Theorem \ref{th:main}. 
In the previous section $\S 5$, we proved 
this theorem by
constructing a homomorphism
$$
H_2(\mathit{PSL}(2,\R),\mathit{PSO}(2);\Z)\overset{\mathit{Tsuboi}}{\cong}\R \rightarrow H_3(B\overline{\Gamma}_1^\omega;\Z)
$$
such that its composition with the homomorphism
$
gv: H_3(B\overline{\Gamma}_1^\omega;\Z)\rightarrow \R
$ 
is an isomorphism.
Our alternative proof of Theorem \ref{th:main} is given along the line of
Thurston's original construction. He considered the group
\begin{equation}
\Gamma_T:=\mathit{PSL}(2,\R)*_{\mathit{SO}(2)} \mathit{SL}(2,\R)
\label{eq:TG}
\end{equation}
which acts naturally on the circle so that there is a homomorphism
$$
\rho_T: \Gamma_T\rightarrow \mathrm{Diff}_+^\omega S^1.
$$
This induces the following associated homomorphism
$$
\tilde{\rho}_T: H_2(\Gamma_T;\Z)\rightarrow H_2(\mathrm{Diff}_+^\omega S^1;\Z)\rightarrow
H_3(\widetilde{\mathrm{Diff}}_+^\omega S^1;\Z)\rightarrow H_3(B\overline{\Gamma}_1^\omega;\Z).
$$
We prove the following stronger result which immediately implies Theorem \ref{th:main}.

\begin{thm}
The image of the homomorphism
$$
\tilde{\rho}_T: H_2(\Gamma_T;\Z)\rightarrow  H_3(B\overline{\Gamma}_1^\omega;\Z)
$$
is isomorphic to $\R$ which maps bijectively onto Tsuboi's direct summand
$\R\subset H_3(B\overline{\Gamma}_1^\infty;\Z)$ under the natural projection.
\label{th:maint}
\end{thm}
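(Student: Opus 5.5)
The plan is to factor $\tilde\rho_T$ through the group $\widetilde{\mathit{PSL}}(2,\R)_{(2,3)}$ and reduce to Section~5. First I compute $H_2(\Gamma_T;\Z)$ by the Mayer--Vietoris sequence of the amalgam $\Gamma_T=\mathit{PSL}(2,\R)*_{\mathit{SO}(2)}\mathit{SL}(2,\R)$:
$$
H_2(\mathit{SO}(2))\to H_2(\mathit{PSL}(2,\R))\oplus H_2(\mathit{SL}(2,\R))\to H_2(\Gamma_T)\overset{\delta}{\to} H_1(\mathit{SO}(2))\to H_1(\mathit{PSL})\oplus H_1(\mathit{SL}).
$$
Both $\mathit{PSL}(2,\R)$ and $\mathit{SL}(2,\R)$ are perfect, so $\delta$ is onto $\mathit{SO}(2)\cong\R/\Z$. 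Equivalently, and more usefully, I compare with the relative groups. A class $x\in H_2(\Gamma_T)$ is represented by a pair $(\sigma_1,\sigma_2)\in H_2(\mathit{PSL}(2,\R),\mathit{PSO}(2))\oplus H_2(\mathit{SL}(2,\R),\mathit{SO}(2))$ with $\partial\sigma_1=\partial\sigma_2$, by the same excision argument as in the proof of Proposition~\ref{prop:Phi}. Here the two copies of $\mathit{SO}(2)$ are identified in $\Gamma_T$, and the boundary rotation of $\sigma_2$, viewed on the circle, differs from that of $\sigma_1$ by the double cover. I will express everything through Tsuboi's parameters $s_1,s_2\in\R$, with the gluing condition forcing a fixed linear relation between them modulo integers.

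Next I lift to $B\overline{\Gamma}^\omega_1$. Via the Gysin map and the lifts $\widetilde{\mathit{PSL}}(2,\R),\widetilde{\mathit{SL}}(2,\R)\subset\widetilde{\mathit{PSL}}(2,\R)_{(2,3)}$ (using $\widetilde{\mathit{SL}}=\varphi_2(\widetilde{\mathit{PSL}})$), the image $\tilde\rho_T(x)$ is represented by $\tilde\sigma_1-\tilde\sigma_2$ glued along a common torus. I insert and subtract the real analytic cap $C$ of that torus. This gives
$$
\tilde\rho_T(x)=\Phi(\sigma_1)-\Phi_2(\sigma_2)=\Phi(s_1)-4\Phi(s_2)
$$
by Proposition~\ref{prop:comp}. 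The terms involving $C$ cancel exactly as in the proof of Theorem~\ref{th:main}. Hence $\mathrm{Im}\,\tilde\rho_T\subset\mathrm{Im}\,\Phi$, and $\mathrm{Im}\,\Phi\cong\R$ is mapped isomorphically by $gv$ and by the projection onto Tsuboi's summand, by Theorem~\ref{th:main}.

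It remains to show that the image is all of $\mathrm{Im}\,\Phi$. The gluing relation leaves the map $x\mapsto s_1-4s_2$, or whatever the correct linear combination turns out to be, surjective onto $\R$. For instance, take $\sigma_1$ with $s_1=s$ and $\sigma_2$ with the matching boundary; this is Thurston's family, whose $gv$ is $3\,gv(t_1(s))$ by Brooks. Since $\Phi$ is $\R$-linear in the parameter and $\mathrm{Im}\,\Phi$ is uniquely divisible, the image is a $\Q$-subspace containing $3\Phi(s)$ for all $s$, and therefore equals $\mathrm{Im}\,\Phi$.

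I expect the main obstacle to be the bookkeeping of the boundary identification. One must check precisely how the shared $\mathit{SO}(2)$ rotation determines the translation lengths of the two lifts in $\R\subset\widetilde{\mathrm{Diff}}^\omega_+S^1$, and in particular the role of the factor $2$ from $\varphi_2$, so that the two caps genuinely coincide and the parameters satisfy the stated relation. I would first settle this on Thurston's explicit cycles and then extend by linearity.
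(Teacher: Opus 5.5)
Your proposal is correct in substance, but it is organized differently from the paper's proof.

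\textbf{How the two routes differ.} The paper proceeds in three stages. First it kills $K^0_2(\R)=\mathrm{Im}(H_2(\mathit{SO}(2);\Z)\to H_2(\Gamma_T;\Z))$, using naturality of the Gysin map and the vanishing of $H_3(\R;\Z)$ in $H_3(\widetilde{\mathit{PSL}}(2,\R)_{(2,3)};\Z)$. Next it uses Proposition~\ref{prop:ca} to split the quotient as $\Z_U\oplus\R_U$, with $\alpha$ an isomorphism on $\R_U$; this is an input from Thurston and Brooks. Finally it kills $\Z_U$ by evaluating on the single generator $\tau=(4/3,1/3)$, where the caps cancel and $\Phi(4/3)-\Phi_2(1/3)=0$ by Proposition~\ref{prop:comp}. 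You instead perform that cap-cancellation for every class at once, obtaining $\tilde\rho_T(x)=\Phi(\sigma_1)-\Phi_2(\sigma_2)=\Phi(s_1)-4\Phi(s_2)$. From this single formula you get the vanishing on $\Z_U$, the equality $\mathrm{Im}\,\tilde\rho_T=\mathrm{Im}\,\Phi$, and, via Theorem~\ref{th:main}, the comparison with Tsuboi's summand. You never need the $\Z_U\oplus\R_U$ splitting. The paper's route costs more steps but produces Proposition~\ref{prop:ca}, which it reuses for Theorem~\ref{th:cao}.

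\textbf{Two points to tighten.}

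First, the pair $(\sigma_1,\sigma_2)$ determines $x$ only modulo $K^0_2(\R)$, which is a large nonzero group. So the ``same excision argument as in Proposition~\ref{prop:Phi}'' does not give uniqueness in $H_2(\Gamma_T;\Z)$. It works one level up instead. By naturality of the relative Gysin map, the image of $\tilde\rho_T(x)$ in $H_3(\widetilde{\mathit{PSL}}(2,\R)_{(2,3)},\R;\Z)$ depends only on $(\tilde\sigma_1,\tilde\sigma_2)$. Since the caps cancel, it coincides there with the image of $\Phi(\sigma_1)-\Phi_2(\sigma_2)$. The map $H_3(\widetilde{\mathit{PSL}}(2,\R)_{(2,3)};\Z)\to H_3(\widetilde{\mathit{PSL}}(2,\R)_{(2,3)},\R;\Z)$ is injective, so the two classes agree. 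That injectivity is exactly the paper's First step in disguise, and you should invoke it explicitly.

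Second, the gluing relation is simply $s_1-s_2\in\Z$. In $\Gamma_T$ both rotation groups are amalgamated as the same rotation group of the circle. The double covering enters only through $\widetilde{\mathit{SL}}(2,\R)=\varphi_2(\widetilde{\mathit{PSL}}(2,\R))$, that is, through $\Phi_2=4\Phi$. Writing $n=s_1-s_2$, this gives $\tilde\rho_T(x)=\Phi(n-3s_2)$. In particular $(s,s)\mapsto -3\Phi(s)$ and $(4/3,1/3)\mapsto 0$, which are exactly the paper's two computations. Surjectivity onto $\mathrm{Im}\,\Phi$ then follows directly, with no appeal to Brooks.

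Your hedge ``whatever the correct linear combination turns out to be'' is not harmless in general: a relation $s_1\equiv 4s_2 \pmod 1$ would collapse the image to a countable group. Your fallback does rescue the conclusion, though: Brooks' value $3\,gv(t_1(s))$ together with injectivity of $gv$ on $\mathrm{Im}\,\Phi$ still gives the full image.
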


This shows that Thurston's surjection $H_3(B\overline{\Gamma}_1^\omega;\Z)\rightarrow\R$
is already achieved by considering only the group $\Gamma_T=\mathit{PSL}(2,\R)*_{\mathit{SO}(2)} \mathit{SL}(2,\R)$
without considering higher covering groups $\mathit{PSL}(2,\R)^{(n)}$ for $n\geq 3$.

As a byproduct of this theorem, we show another result (see Theorem \ref{th:cao} below)
which may have
independent meaning.
Here is the background for this result. 
Recall that we have two characteristic classes
\begin{align*}
& \chi\in H^2(B\mathrm{Diff}_+^{\infty,\delta} S^1;\Z)  \\
& \alpha \in H^2(B\mathrm{Diff}_+^{\infty,\delta} S^1;\R)
\end{align*}
for foliated $S^1$-bundles (see \cite{morita} e.g.). The former is the Euler class
while the latter is the Godbillon-Vey class integrated along the fiber.
These two classes are also defined as cohomology classes of the group $\Gamma_T$
through the homomorphism $\rho_T$.
Although Thurston did not mention in his paper \cite{thurston},
by his construction of real analytic cycles mentioned in $\S 1$, he essentially proved that
the homomorphism
$$
(\chi,\alpha): H_2(B\mathrm{Diff}_+^{\omega,\delta} S^1;\Z)\rightarrow \Z\oplus\R
$$
is surjective. In the proof of the following theorem, we also describe a detailed proof 
of this result of Thurston.

\begin{thm}
Thurston's surjective homomorphism
$$
(\chi,\alpha): H_2(B\mathrm{Diff}_+^{\omega,\delta} S^1;\Z)\rightarrow \Z\oplus\R
$$
splits. Namely, there exists a direct summand $\Z\oplus\R\subset H_2(B\mathrm{Diff}_+^{\omega,\delta} S^1;\Z)$
on which the homomorphism $(\chi,\alpha)$ gives an isomorphism.
\label{th:cao}
\end{thm}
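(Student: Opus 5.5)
We will obtain Theorem \ref{th:cao} as a by-product of the argument for Theorem \ref{th:maint}, using two splittings, one for each factor of $\Z\oplus\R$. Write $\mathcal{D}=\mathrm{Diff}_+^\omega S^1$ and $\widetilde{\mathcal{D}}=\widetilde{\mathrm{Diff}}_+^\omega S^1$.

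\textbf{The $\Z$ factor.} Take the class $[\Sigma_2]$ of a closed genus $2$ surface with a Fuchsian representation into $\mathit{PSL}(2,\R)\subset\mathcal{D}$. Its values are $\chi=\pm 2$ (the Euler characteristic of $\Sigma_2$ is $-2$) and $\alpha=0$, since $\alpha$ vanishes on $\mathit{PSL}(2,\R)$-flat bundles. Adding $\Z$ alone is not enough, because $\chi$ must take the value $1$ on the summand. For this we use the $n$-fold covers $\mathit{PSL}(2,\R)^{(n)}$, or the pair inside $\Gamma_T$: the difference of suitable Fuchsian classes from $\mathit{PSL}(2,\R)$ and $\mathit{SL}(2,\R)$ gives Euler number $1$. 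More concretely, $\chi$ is integral and surjective on $H_2$. A preimage $e$ of $1$ with $\alpha(e)=0$ can be chosen, since the $\alpha$-value of any chosen preimage can be corrected by the $\R$-summand constructed next.

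\textbf{The $\R$ factor.} We transfer the splitting of Theorem \ref{th:main} through the Gysin-type map $\mu:H_2(\mathcal{D};\Z)\to H_3(\widetilde{\mathcal{D}};\Z)\to H_3(\Gamma_H;\Z)$. Integration along the fiber gives $gv\circ\mu=\alpha$. In the construction of Theorem \ref{th:maint}, the image of $\tilde\rho_T$ on $H_2(\Gamma_T;\Z)$ is a copy of $\R$ on which $gv$ is an isomorphism. Choose $W\subset H_2(\Gamma_T;\Z)$ mapping isomorphically onto that image; this is possible since the image is divisible, hence a direct summand of a quotient, and in fact one can lift explicitly using Thurston's cycles $\Phi_2(s)-\Phi(s)$ built from the chains of $\mathit{PSL}(2,\R)$ and $\mathit{SL}(2,\R)$. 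Push $W$ into $H_2(\mathcal{D};\Z)$ and call the image $V$. Then $\alpha$ restricted to $V$ equals $gv\circ\mu$, so $\alpha|_V$ is an isomorphism $V\cong\R$. On Thurston's cycle the Euler class contributions of the two pieces cancel, as in the computation with $C\langle 1,s\rangle$, so $\chi|_V=0$; if not, we replace $V$ by the kernel of $\chi$ on $V+\Z e$.

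\textbf{The projection.} Define $p=(\chi,\alpha)$ restricted to $A=\Z e\oplus V$. The map $p|_A$ is an isomorphism onto $\Z\oplus\R$, because it sends $e\mapsto(1,0)$ and restricts to $\{0\}\oplus(\alpha|_V)$ on $V$. Then $r=(p|_A)^{-1}\circ(\chi,\alpha)$ is a retraction $H_2\to A$, which makes $A$ a direct summand.

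The main obstacle is ensuring that $V$ is a genuine subgroup mapped isomorphically by $\alpha$, rather than only a set of lifts. This requires the lift $W$ to be additive. We would handle it by using the homomorphism from $H_2(\mathit{SL}(2,\R),\mathit{SO}(2))$ and $H_2(\mathit{PSL}(2,\R),\mathit{PSO}(2))$ given by $s\mapsto$ (Thurston cycle with parameter $s$). The caps of the boundary tori cancel additively in $s$, so this map is a homomorphism from $\R$ whose composition with $\alpha$ is $3\,gv(t_1(\cdot))$, an isomorphism.
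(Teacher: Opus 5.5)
\textbf{There is a genuine gap.} It is exactly at the step that carries the content of the theorem: producing a \emph{subgroup} $W\subset H_2(\Gamma_T;\Z)$, or of $H_2(\mathrm{Diff}_+^\omega S^1;\Z)$, on which $\alpha$ is an isomorphism onto $\R$.

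Your first justification, that the image is divisible, is not a valid reason. Divisibility makes $\R$ injective, and that splits injections \emph{out of} $\R$, not surjections \emph{onto} $\R$. For example, a free abelian group surjects onto $\R$ with no section, since it has no nonzero divisible subgroup.

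Your second justification, the explicit Thurston cycles, does not give a homomorphism into $H_2$. Gluing a class of parameter $s$ in $H_2(\mathit{PSL}(2,\R),\mathit{PSO}(2);\Z)$ to one in $H_2(\mathit{SL}(2,\R),\mathit{SO}(2);\Z)$ along the common rotation boundary determines a class of $H_2(\Gamma_T;\Z)$ only modulo $\mathrm{Im}(H_2(\mathit{SO}(2);\Z)\to H_2(\Gamma_T;\Z))=K^0_2(\R)$. The additivity you invoke, cancellation of the real analytic caps, is a statement in $H_3(\Gamma_H;\Z)$ after applying $\mu$. There $K^0_2(\R)$ is killed by the First step of the proof of Theorem \ref{th:maint}. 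But you must come back to $H_2(\mathrm{Diff}_+^\omega S^1;\Z)$, and whether $\rho_T$ kills $K^0_2(\R)$ there is exactly the open problem the paper poses right after this proof. The caps live in $H_3$ of the affine group and play no role in $H_2$. It is also telling that your explicit route would give a choice-free proof, whereas the paper explicitly needs the Axiom of Choice.

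\textbf{The missing idea is that the relevant \emph{kernel} is divisible.} By Proposition \ref{prop:ca}, $(\chi,\alpha)\colon H_2(\Gamma_T;\Z)\to\Z\oplus\R$ is surjective with kernel $K^0_2(\R)$, which is a $\Q$-vector space. Hence $\Ker\chi$ is an extension of $\R$ by $K^0_2(\R)$. It is therefore torsion free and divisible, i.e.\ a $\Q$-vector space. A $\Q$-linear complement of $K^0_2(\R)$ in $\Ker\chi$, which exists by the Axiom of Choice, is the $W$ you need. This is the paper's proof. With it, your correction of $e$ and your retraction argument finish the job, and the detour through $\mu$, $\Gamma_H$ and Theorem \ref{th:main} becomes unnecessary.

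\textbf{Two smaller points.}
\begin{itemize}
\item $\chi|_V=0$ holds automatically because $\Hom(\R,\Z)=0$, not because of any cap computation.
\item $\alpha$ does \emph{not} vanish on closed $\mathit{PSL}(2,\R)$ Fuchsian classes. By the formula in Proposition \ref{prop:ca}, $\alpha=-4\pi^2\chi$ on them; this is the Godbillon--Vey invariant of the weak stable foliation of the geodesic flow. That is precisely why Thurston needs both $\mathit{PSL}(2,\R)$ and $\mathit{SL}(2,\R)$. The error is harmless here only because you later correct $\alpha(e)$.
\end{itemize}
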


\begin{remark}
For the moment, we have to assume the Axiom of Choice (AC) in the present proof of the above Theorem \ref{th:cao}.
It should be desirable to give a proof without it.
\end{remark}

The proofs of Theorem \ref{th:maint} and Theorem \ref{th:cao} are based on the following Proposition \ref{prop:ca} and the fact that the inclusion 
$\R\subset \widetilde{\mathit{PSL}}(2,\R)*_{\R}(\R\rtimes Z_{(2,3)})$ is homologically trivial 
up to degree $3$ which we showed in $\S 5$.

Recall that Sah and Wagoner \cite{sw} proved that there is an isomorphism
$$
H_2(\mathit{SL}(2,\R);\Z)\cong K^0_2(\R)\oplus \Z
$$
where $K^0_2(\R)$ denotes the second algebraic $K_2$ group of the real field $\R$
omitting the factor $\Z/2$ so that it
is a $\Q$ vector space, and the second factor is detected by the Euler class.
Tsuboi \cite{tsuboi84} extended this theorem to show that there exist similar isomorphisms
$$
H_2(\mathit{PSL}(2,\R)^{(n)};\Z)\cong K^0_2(\R)\oplus \Z
$$
for all the finite covering group $\mathit{PSL}(2,\R)^{(n)}$
of $\mathit{PSL}(2,\R)$. Here the second factors $\Z$ are all detected by the Euler class
and the first factor is the image of the natural homomorphism
$$
H_2(\mathit{SO}(2);\Z) \rightarrow H_2(\mathit{PSL}(2,\R)^{(n)};\Z)
$$
induced by the inclusion $\mathit{SO}(2)\subset \mathit{PSL}(2,\R)^{(n)}$ of the maximal compact subgroup
each of which is canonically identified with the rotation group $\mathit{SO}(2)$.

\begin{prop}
The homomorphism
$$
(\chi,\alpha): H_2(\Gamma_T;\Z) \rightarrow \Z\oplus\R
$$
induced by $(\chi,\alpha)$ 
is surjective and its kernel coincides with the subgroup 
$K^0_2(\R)\subset H_2(\Gamma_T;\Z)$.
Namely we have the following short exact sequence
$$
0\rightarrow K^0_2(\R)\rightarrow H_2(\Gamma_T;\Z)
\overset{(\chi,\alpha)}{\rightarrow}\Z\oplus\R\rightarrow 0.
$$
\label{prop:ca}
\end{prop}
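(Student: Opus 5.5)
We use the Mayer--Vietoris sequence of the amalgamated product $\Gamma_T=\mathit{PSL}(2,\R)*_{\mathit{SO}(2)}\mathit{SL}(2,\R)$:
$$
H_2(\mathit{SO}(2);\Z)\rightarrow H_2(\mathit{PSL}(2,\R);\Z)\oplus H_2(\mathit{SL}(2,\R);\Z)\rightarrow H_2(\Gamma_T;\Z)\overset{\delta}{\rightarrow} H_1(\mathit{SO}(2);\Z)\rightarrow H_1(\mathit{PSL}(2,\R))\oplus H_1(\mathit{SL}(2,\R)).
$$
Both $\mathit{PSL}(2,\R)$ and $\mathit{SL}(2,\R)$ are perfect, so $\delta$ is onto $H_1(\mathit{SO}(2);\Z)=\mathit{SO}(2)\cong\R/\Z$. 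By Sah--Wagoner and Tsuboi, $H_2(\mathit{PSL}(2,\R)^{(n)};\Z)\cong K^0_2(\R)\oplus\Z$ for $n=1,2$, where $K^0_2(\R)$ is the image of $H_2(\mathit{SO}(2);\Z)$. Hence the cokernel of the first map is $(K^0_2(\R)\oplus\Z\oplus K^0_2(\R)\oplus\Z)/K^0_2(\R)$, with the copy of $K^0_2(\R)$ embedded diagonally. This gives an exact sequence
$$
0\rightarrow K^0_2(\R)\oplus\Z\oplus\Z\rightarrow H_2(\Gamma_T;\Z)\rightarrow \R/\Z\rightarrow 0 .
$$
On the subgroup $\Z\oplus\Z$ (the closed surface classes of the two factors), $\chi$ is the Euler number. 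The Euler number equals the parameter $s$ on each factor, since the parameter $s$ coincides with the Euler class in the closed case. The class $\alpha$ vanishes on $\mathit{PSL}(2,\R)^{(n)}$-closed classes, because $\alpha$ is a multiple of $\chi$ there. On $K^0_2(\R)$, which comes from $H_2(\mathit{SO}(2))$, both $\chi$ and $\alpha$ vanish: rotations are isometries for which the Godbillon--Vey cocycle vanishes, and the Euler class of a rotation subgroup is torsion/trivial on $\wedge^2(\R/\Z)$ rationally.

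Next we describe the classes mapping onto $\R/\Z$. For $\theta\in\R/\Z$ choose Tsuboi relative classes $\sigma_1\in H_2(\mathit{PSL}(2,\R),\mathit{PSO}(2))$ and $\sigma_2\in H_2(\mathit{SL}(2,\R),\mathit{SO}(2))$ with boundary $\theta$. Gluing them along the common boundary rotation gives $\sigma_1-\sigma_2\in H_2(\Gamma_T;\Z)$ with $\delta=\theta$. Here the parameter $s$ is lifted compatibly, since $s_1\equiv s_2\equiv\theta$ modulo the appropriate lattices. We then compute $(\chi,\alpha)$ on these Thurston cycles. $\chi$ is computed by lifting to $\widetilde{\mathit{PSL}}$ and $\widetilde{\mathit{SL}}$ and taking the difference of translation numbers. $\alpha$ equals the Godbillon--Vey value, which Thurston--Brooks show is a nonzero multiple of the function $gv(t_1(s))$, proportional to $s$ (with the factor $3$ from Proposition \ref{prop:comp}). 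The map $s\mapsto(\chi,\alpha)$ together with the closed classes is then seen to fill out $\Z\oplus\R$. The $\Z\oplus\Z$ part maps onto a lattice containing $\Z\oplus 0$, and the $\R$-direction comes from the real parameter. This gives surjectivity.

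For the kernel we argue in two steps. First, $(\chi,\alpha)$ restricted to $\Z\oplus\Z$ together with the extension class is injective modulo $K^0_2(\R)$. If $(\chi,\alpha)(x)=0$, then $\alpha$ forces the boundary parameter to be $0$ in $\R$, so $\delta(x)=0$. Hence $x$ lies in $K^0_2(\R)\oplus\Z\oplus\Z$. On that subgroup, the two Euler numbers are both determined by $(\chi,\alpha)$: $\chi$ gives their combination, and $\alpha$ the other, because $\alpha$ is a different multiple of $\chi$ on $\mathit{PSL}$ versus $\mathit{SL}$, the $\mathit{SL}$ cover doubling the Godbillon--Vey constant. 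So $x\in K^0_2(\R)$. Conversely, $K^0_2(\R)$ lies in the kernel as noted.

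The main obstacle is the precise bookkeeping of $\alpha$. We must verify that on closed $\mathit{PSL}(2,\R)^{(n)}$-classes $\alpha=c_n\chi$ with $c_1\neq c_2$. We must also verify that on the glued Thurston cycles $\alpha$ is linear in $s$ with nonzero slope. For this we will invoke the explicit Godbillon--Vey computation of Thurston--Brooks, giving the factor $3=4-1$, together with Proposition \ref{prop:comp}.
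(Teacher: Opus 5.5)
Your route (Mayer--Vietoris for the amalgam, then glued Tsuboi relative classes to cover $H_1(\mathit{SO}(2);\Z)=\R/\Z$) is viable. It is essentially the bookkeeping the paper records in Remark~\ref{re:MV}.

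The paper's proof instead uses the exact sequence of the pair $(\Gamma_T,\mathit{SO}(2))$ plus excision, which parametrizes the whole quotient at once: $H_2(\Gamma_T;\Z)/K^0_2(\R)\cong U=\{(s_1,s_2)\in\R^2 : s_1-s_2\in\Z\}$. On it, $(\chi,\alpha)(s_1,s_2)=(s_1-s_2,\,-4\pi^2s_1+16\pi^2s_2)$, the restriction of an $\R$-linear map of determinant $12\pi^2\neq 0$. So injectivity on $U$ and surjectivity onto $\Z\oplus\R$ are immediate.

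Both arguments use the same external inputs: Tsuboi's $H_2(\mathit{PSL}(2,\R)^{(n)},\mathit{SO}(2);\Z)\cong\R$ and the Thurston--Brooks value of $\alpha$. But as written, your bookkeeping of $\alpha$ contains errors that matter.

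First, $\alpha$ does \emph{not} vanish on closed classes, and that sentence contradicts the rest of your argument. Thurston's cycle with integral parameter, $(1,1)\in U$, is itself, modulo $K^0_2(\R)$, the closed combination $\iota(1,-1)$. It has $\chi=0$ and $\alpha=12\pi^2\neq 0$; if $\alpha$ vanished on closed classes, this element would lie in the kernel. The correct values are $\alpha=-4\pi^2\chi$ on $\Z_{\mathit{PSL}}$ and $\alpha=-16\pi^2\chi$ on $\Z_{\mathit{SL}}$. The ratio is $4$, not $2$: the fiberwise double cover doubles $\alpha$ while halving $\chi$.

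Second, the closed classes do \emph{not} map onto a lattice containing $\Z\oplus 0$. Their image is spanned by $(1,-4\pi^2)$ and $(1,-16\pi^2)$, and it meets $\Z\oplus 0$ only in $3\Z\oplus 0$. This is exactly why the paper's generator $\tau=(\frac{4}{3},\frac{1}{3})$ of $\Z_U$ needs a non-closed glued class with boundary rotation $\frac{1}{3}$, and why $\rho^s_*([\Sigma_2])-2\rho_*([\Sigma_2])$ only yields $3\tau$. Surjectivity still holds, since any class with $\chi=1$ together with the line $\{(0,12\pi^2 s)\}$ of Thurston cycles generates $\Z\oplus\R$, but not for the reason you give.

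Third, the kernel step (that $\alpha$ forces the boundary parameter to vanish) must be made explicit. Write $x\equiv y+\iota(m,n)$ modulo $K^0_2(\R)$, where $y=(s,s)$ is a Thurston cycle with $\delta(y)=\delta(x)$. Then $\chi(x)=m+n$ and $\alpha(x)=12\pi^2 s-4\pi^2 m-16\pi^2 n$. So $\chi(x)=\alpha(x)=0$ forces $n=-m$ and $s=-m\in\Z$, hence $\delta(x)=0$, and injectivity on $\iota(\Z\oplus\Z)$ finishes. This works only with the corrected constants.
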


\begin{proof}
 
Let us consider the following long exact sequence for the pair $(\Gamma_T,\mathit{SO}(2))$
\begin{align*}
&\cdots \rightarrow H_2(\mathit{SO}(2);\Z)\overset{i_*}{\rightarrow} H_2(\Gamma_T;\Z)\rightarrow \\
&H_2(\Gamma_T,\mathit{SO}(2);\Z)\cong \R\oplus\R\overset{\partial}{\rightarrow} H_1(\mathit{SO}(2);\Z)\cong\R/\Z\rightarrow0
\end{align*}
where $i: \mathit{SO}(2)\rightarrow \Gamma_T$ denotes the inclusion so that
$$
\mathrm{Im}\, i_*=K^0_2(\R).
$$
By the excision we have an isomorphism
$$
H_2(\Gamma_T,\mathit{SO}(2);\Z)\cong H_2(\mathit{PSL}(2,\R),\mathit{SO}(2);\Z)\oplus H_2(\mathit{SL}(2,\R),\mathit{SO}(2);\Z).
$$
By Tsuboi's result mentioned in $\S 5$, we have an isomorphism
$$
H_2(\mathit{PSL}(2,\R),\mathit{SO}(2);\Z)\oplus H_2(\mathit{SL}(2,\R),\mathit{SO}(2);\Z)\cong \R\oplus\R.
$$
Hence we can conclude
$$
H_2(\Gamma_T;\Z)/K^0_2(\R)\cong \mathrm{Ker}\ \partial
$$
The boundary homomorphism $\partial$ is given by
$$
H_2(\Gamma_T,\mathit{SO}(2);\Z)\cong \R\oplus\R\ni (s_1,s_2)\mapsto s_1-s_2\ \mathrm{mod}\ \Z
$$
It follows that
$$
H_2(\Gamma_T;\Z)/K_2^0(\R)\cong \{(s_1,s_2)\in \R\oplus\R; s_1-s_2\in \Z\}
$$
Now we set
$$
U:=\{(s_1,s_2)\in \R\oplus\R; s_1-s_2\in \Z\}
$$
and study its structure. Let us define
\begin{align*}
\Z_U:&=\left\{\left(\frac{4}{3}n,\frac{1}{3}n\right)\in \R\oplus\R;n\in\Z\right\}\cong\Z\subset U\\
\R_U:&=\{(s,s)\in \R\oplus\R;s\in\R\}\cong\R\subset U
\end{align*}
Then we can see that
$$
U= \Z_U\oplus \R_U.
$$
In fact
$$
U\ni (s_1,s_2)=\left(\frac{4}{3}n,\frac{1}{3}n\right)+\left(s_1-\frac{4}{3}n,s_2-\frac{1}{3}n\right)\quad (s_1-s_2=n).
$$
Hence we can conclude that
$$
H_2(\Gamma_T;\Z)/K^0_2(\R)\cong U=\Z_U\oplus \R_U\cong\Z\oplus\R.
$$
Both of $\chi$ and $\alpha$ vanish on $K^0_2(\R)$ because holonomy of any cycle
belonging to $K^0_2(\R)$ is contained in the rotation group $\mathit{SO}(2)$.
Therefore they define a homomorphism
$$
(\chi,\alpha):H_2(\Gamma_T;\Z)/K^0_2(\R)\cong U \rightarrow \Z\oplus\R
$$
which is given by
\begin{align*}
&\text{$\chi(s_1,s_2)=s_1-s_2$ \ so that\ $\chi\left(\frac{4}{3}n,\frac{1}{3}n\right)=n$\ on $\Z_U$ \ and\ $\chi\equiv 0$\ on \ $\R_U$ }  \\
&\text{$\alpha(s_1,s_2)=-4\pi^2s_1+16\pi^2s_2$\ so that\ $\alpha\equiv 0$ on $\Z_U$\ and\ $\alpha(s,s)=12\pi^2 s$\ on\ $\R_U$}.
\end{align*}
Thus the above direct sum decomposition of the module $U$ as $\Z_U\oplus \R_U$ is the canonically defined unique decomposition
with respect to the two characteristic numbers $\chi$ and $\alpha$.
Here the computation of $\alpha$ on $\R_U$ is essentially due to Thurston as explained by Brooks \cite{bott}
and translated through Tsuboi's decomposition of $H_2(\Gamma_T,\mathit{SO}(2);\Z)$ as above.
In this way, we see that the two numbers give an isomorphism
$$
(\chi,\alpha): H_2(\Gamma_T;\Z)/K^0_2(\R)=\Z_U\oplus\R_U\overset{\cong}{\rightarrow} \Z\oplus\R
$$
This completes the proof.
\end{proof}

\begin{remark}

Instead of relative homology group in the proof of Proposition \ref{prop:ca}, here we study
what will happen if we use the Mayer-Vietoris exact sequence:
 
 \begin{align*}
 \hspace{6mm}H_2(\mathit{SO}(2);\Z)&\rightarrow H_2(\mathit{PSL}(2,\R);\Z)\oplus H_2(\mathit{SL}(2,\R);\Z)\rightarrow\\ 
& H_2(\Gamma_T;\Z)\rightarrow H_1(\mathit{SO}(2);\Z)=\R/\Z\rightarrow 0
 \end{align*}
It is known by Sah-Wagoner and Tsuboi in their papers cited above
$$
H_2(\mathit{PSL}(2,\R);\Z)\cong K^0_2(\R)\oplus\Z_{\mathit{PSL}},\ H_2(\mathit{SL}(2,\R);\Z)\cong K^0_2(\R)\oplus\Z_{SL},
$$
where both $K^0_2(\R)$ come from $H_2(\mathit{SO}(2);\Z)$ and the Euler class $\chi$ sends both direct summand
$\Z_{\mathit{PSL}},\Z_{SL}$ isomorphically onto $\Z$. Also the inclusion $i: \mathit{SO}(2)\subset \Gamma_T$ induces
$
\mathrm{Im}\, i_*=K^0_2(\R)\subset H_2(\Gamma_T;\Z).
$
Therefore we are left with the exact sequence
$$
0\rightarrow \Z_{\mathit{PSL}}\oplus \Z_{SL} \overset{\iota}{\rightarrow}  H_2(\Gamma_T;\Z)/K^0_2(\R)=U 
\overset{\partial}{\rightarrow} \R/\Z\rightarrow 0.
$$
We see from the proof of Proposition \ref{prop:ca} that
$$
\iota(m,n)= (m,0)+(0,-n)=(m,-n)\in U,\quad
(m\in \Z_{\mathit{PSL}}, n\in \Z_{SL}).
$$
\label{re:MV}
\end{remark}

As preparations for the proof of Theorem \ref{th:maint}, we consider the subgroup $K^0_2(\R)\subset H_2(\Gamma_T;\Z)$
and the direct summand
$\Z_U\subset U=H_2(\Gamma_T;\Z)/K^0_2(\R)$. First we consider the former one.
As mentioned above, Sah-Wagoner \cite{sw} proved that
$$
H_2(\mathit{SL}(2,\R);\Z)\cong K^0_2(\R)\oplus\Z.
$$
Here the summand $K^0_2(\R)$ denotes the algebraic $K_2$ group of the real field $\R$
omitting the $\Z/2$-factor and described as follows. Define a subgroup $A\subset \mathit{SL}(2,\R)$ by 
$$
A=\left\{\begin{pmatrix}
r&0\\
0&r^{-1}
\end{pmatrix};
r\in \R^+ \right\}\ \subset \mathit{SL}(2,\R),
$$
which appears in the Iwasawa decomposition $\mathit{SL}(2,\R)=KAN$.
Then 
$$
K^0_2(\R)=\mathrm{Im}\left(H_2(A;\Z)\rightarrow H_2(\mathit{SL}(2,\R);\Z)\right).
$$
Later Tsuboi \cite{tsuboi84} showed that this subgroup coincides with the
image of the second homology group of the maximal compact subgroup $K=\mathit{SO}(2)\subset \mathit{SL}(2,\R)$.
Namely 
$$
K^0_2(\R)=\mathrm{Im}\left(H_2(\mathit{SO}(2);\Z)\rightarrow H_2(\mathit{SL}(2,\R);\Z)\right).
$$
Now, Parry-Sah \cite{ps} (p. 191, (2.40)) asserted the following statement.  In the Gysin exact sequence 
\begin{equation}
\cdots\rightarrow H_4(\mathit{SL}(2,\R);\Z)\overset{\cap \chi}{\rightarrow}H_2(\mathit{SL}(2,\R);\Z)
\overset{\mu}{\rightarrow} H_3(\widetilde{\mathit{SL}}(2,\R);\Z)
\rightarrow H_3(\mathit{SL}(2,\R);\Z)\rightarrow 0
\label{eq:gy}
\end{equation}
corresponding to
the central extension $0\rightarrow \Z\rightarrow \widetilde{\mathit{SL}}(2,\R)\rightarrow \mathit{SL}(2,\R)\rightarrow 1$,
the homomorphism $\cap\chi$ is the $0$-map. It implies that the homomorphism
\begin{equation}
\mu: H_2(\mathit{SL}(2,\R);\Z)\cong K^0_2(\R)\oplus \Z\subset H_3(\widetilde{\mathit{SL}}(2,\R);\Z)
\label{eq:mui}
\end{equation}
is injective.
The authors mentioned that this result {\it ``would follow from the vanishing of the square of $e$ (the Euler class)"}.
This is true for the summand $\Z\subset H_2(\mathit{SL}(2,\R);\Z)$. However 
we think
that the injectivity under $\mu$ in \eqref{eq:mui} of the other
submodule $K^0_2(\R)\subset H_2(\mathit{SL}(2,\R);\Z)$ is {\it not} a consequence of it.
This does not affect our argument below, but see Appendix in this section for this point.

Now we consider the latter direct summand $\Z_U\subset U=H_2(\Gamma_T;\Z)/K^0_2(\R)$.
A homology class $\sigma\in H_2(\Gamma_T;\Z)$ which projects to 
the class $(4,1)\in \Z_U$ with Euler number $3$ was described in our former note \cite{kmm}
as follows.
Let 
$$
\rho:\pi_1(\Sigma_2)\rightarrow \mathit{PSL}(2,\R)\subset \Gamma_T
$$ 
be a Fuchsian representation, corresponding to a hyperbolic structure
on a closed surface $\Sigma_2$ of genus $2$. 
Since $\Sigma_2$ has a spin structure,
this representation lifts to $\mathit{SL}(2,\R)$
so that there is a representation
$$
\rho^s:\pi_1(\Sigma_2)\rightarrow \mathit{SL}(2,\R)\subset\Gamma_T
$$
which projects to $\rho$ under the natural projection $\mathit{SL}(2,\R)\rightarrow \mathit{PSL}(2,\R)$.
Then the Euler numbers are given by $\chi(\rho_*([\Sigma_2]))=-2$ while $\chi(\rho^s_*([\Sigma_2]))=-1$. Now set 
$\sigma=\rho^s_*([\Sigma_2])-2\rho_*([\Sigma_2])\in H_2(\Gamma_T;\Z)$ so that $\chi(\sigma)=3$.
In fact, by Remark \ref{re:MV} it projects to the element $(4,1)\in \Z_U$ as required
and also $\alpha(\sigma)=0$.

We wonder whether an answer to the following question is known or not.
\begin{question}
Let $\mathcal{T}_g$ be the Teichm\"uller space of genus $g$. For each element
$m\in \mathcal{T}_g$, let $\rho_m:\pi_1(\Sigma_g)\rightarrow \mathit{PSL}(2,\R)$
denote the associated faithful discrete representation which is well defined
up to conjugacy. Choose a base point $m\in \mathcal{T}_g$.
Then there is a map
$$
\mathcal{T}_g\rightarrow K^0_2(\R)\subset H_2(\mathit{PSL}(2,\R);\Z)
$$
defined by 
$$
\mathcal{T}_g\ni m\mapsto (\rho_m)_*([\Sigma_g])-(\rho_{m_0})_*([\Sigma_g])
\in K^0_2\subset H_2(\mathit{PSL}(2,\R);\Z).
$$
Is this map the $0$-map or non-trivial? 
\end{question}

It seems natural to expect that the above map is non-trivial and
we tried to prove this. However, it required complicated computations
involving Steinberg symbols so that we could not obtain an answer.

\vspace{5mm}
\noindent
{\it Proof of Theorem \ref{th:maint}}

\vspace{2mm}
Consider the following sequence of homomorphisms
$$
H_2(\Gamma_T;\Z)\overset{\mu}{\rightarrow} H_3(\widetilde{\Gamma}_T;\Z)\rightarrow
H_3(\widetilde{\mathit{PSL}}(2,\R)_{(2,3)};\Z)\rightarrow H_3(\Gamma_H;\Z).
$$
Here recall the very important fact that the central extension 
$\widetilde{\Gamma}_T$
of the Thurston's group $\Gamma_T$ associated with the Euler class is contained in
$\widetilde{\mathit{PSL}}(2,\R)_{(2,3)}$.
Also $\mu$ denotes a part of the Gysin exact sequence. 

\vspace{2mm}
\noindent
{\it First step}

\vspace{2mm}
Here we show $\tilde{\mu}(K^0_2(\R))=0\in H_3(\widetilde{\mathit{PSL}}(2,\R)_{(2,3)};\Z)$
where 
$$
\tilde{\mu}:H_2(\Gamma_T;\Z)\overset{\mu}{\rightarrow} H_3(\widetilde{\Gamma}_T;\Z)\rightarrow
H_3(\widetilde{\mathit{PSL}}(2,\R)_{(2,3)};\Z)
$$ 
denotes the composition of the first two homomorphisms above. 
This is enough to prove Theorem \ref{th:main}.

\vspace{2mm}

The central extension 
$$
0\rightarrow \Z\rightarrow \widetilde{\Gamma}_T\rightarrow \Gamma_T\rightarrow 1
$$
restricts to the central extension
$$
0\rightarrow \Z\rightarrow\R\rightarrow \R/\Z\rightarrow 1
$$
where $\R/\Z$ is the same as the rotation group $\mathit{SO}(2)\subset \Gamma_T$ and
$\R\subset \widetilde{\Gamma}_T$ denotes the set of translations.
By the naturality of the Gysin map, the following diagram is commutative.
$$
\begin{CD}
H_2(\R/\Z;\Z)
 @>{\mu}>> H_3(\R;\Z)
\\
@V{i_*}VV  @VV{j_*}V\\
H_2(\Gamma_T;\Z) @>{\mu}>>H_3(\widetilde{\Gamma}_T;\Z).
\end{CD}
$$
where $j:\R\rightarrow \widetilde{\Gamma}_T$ denotes the inclusion. Now
by Tsuboi
$$
H_2(\Gamma_T;\Z)\supset K^0_2(\R)=\mathrm{Im}\, i_*.
$$
Hence, by the commutativity of the above diagram, we have
$$
\mu(K^0_2(\R))\subset \mathrm{Im}\, j_*.
$$
Now consider the following sequence
$$
H_3(\widetilde{\Gamma}_T;\Z)\rightarrow
H_3(\widetilde{\mathit{PSL}}(2,\R)_{(2,3)};\Z)\rightarrow H_3(\Gamma_H;\Z)
$$
and the commutative diagram
$$
\begin{CD}
H_3(\R)
 @>{=}>> H_3(\R)
\\
@V{j_*}VV  @VV{k_*}V\\
H_3(\widetilde{\Gamma}_T;\Z) @>{\ell_*}>>H_3(\widetilde{\mathit{PSL}}(2,\R)_{(2,3)};\Z).
\end{CD}
$$
By the commutativity, we conclude that
$$
\ell_*(\mu(K^0_2(\R)))\subset k_*(H_3(\R;\Z))
$$
where $\ell: \widetilde{\Gamma}_T\rightarrow \widetilde{\mathit{PSL}}(2,\R)_{(2,3)}$
denotes the inclusion. 
Now the other very important fact here which we proved in $\S 5$ is that
$$
\text{$H_3(\R;\Z)$ vanishes in $H_3(\widetilde{\mathit{PSL}}(2,\R)_{(2,3)};\Z)$}.
$$
The point of the proof is the property that $2^3-1=7$ and $3^3-1=26$ are coprime to each other.
Therefore
the subgroup $K^0_2(\R)\subset H_2(\Gamma_T;\Z)$ vanishes in $H_3(\widetilde{\mathit{PSL}}(2,\R)_{(2,3)};\Z)$
and hence also in $H_3(\Gamma_H;\Z)$.

On the other hand, the quotient $H_2(\Gamma_T;\Z)/K^0_2(\R)$ is isomorphic to
$\Z\oplus \R$ by Proposition \ref{prop:ca}. We can finally conclude that
$$
\mathrm{Im}(H_2(\Gamma_T;\Z)\rightarrow H_3(\Gamma_H;\Z))
$$ 
contains a direct summand $\R$
which maps isomorphically onto $\R$ by the homomorphism gv.
This completes the {\it First step} of the proof of Theorem \ref{th:maint}.

\vspace{3mm}
\noindent
{\it Second step}

\vspace{1mm}
By the first step above, 
the homomorphism $\tilde{\mu}:H_2(\Gamma_T;\Z)\rightarrow H_3(\widetilde{\mathit{PSL}}(2,\R)_{(2,3)};\Z)$
factors through a homomorphism
$$
U:=H_2(\Gamma_T;\Z)/K^0_2(\R)=\Z_U\oplus \R_U\rightarrow H_3(\widetilde{\mathit{PSL}}(2,\R)_{(2,3)};\Z).
$$
Here we prove that the direct summand $\Z_U$ of $U$ vanishes in the above homomorphism.
This will finish the proof of Theorem \ref{th:maint}. We first prove that
the element $\sigma=\rho^s_*([\Sigma_2])-2\rho_*([\Sigma_2])\in H_2(\Gamma_T;\Z)$,
which projects to $3\in \Z_U$, vanishes in $H_3(\widetilde{\mathit{PSL}}(2,\R)_{(2,3)};\Z)$
under the above homomorphism. This proof can be given using only the property of
the endomorphism $\varphi_2:\widetilde{\mathrm{Diff}}_+^\omega S^1\rightarrow\widetilde{\mathrm{Diff}}_+^\omega S^1 $ 
without making use of our technology developed in 
the preceding sections. Recall that $\varphi_2$ sends the subgroup $\widetilde{\mathit{PSL}}(2,\R)\subset\widetilde{\mathrm{Diff}}_+^\omega S^1$
isomorphically onto the subgroup $\widetilde{\mathit{SL}}(2,\R)\subset\widetilde{\mathrm{Diff}}_+^\omega S^1$
(see $\S 5$). Also recall that 
$
\Gamma_T=\mathit{PSL}(2,\R)*_{\mathit{SO}(2)} \mathit{SL}(2,\R)
$
so that
$$
\widetilde{\Gamma}_T=\widetilde{\mathit{PSL}}(2,\R)*_\R \widetilde{\mathit{SL}}(2,\R).
$$
Now, if we denote by $S^1(\rho)$ (resp. $S^1(\rho^s)$) the total space of the flat
$S^1$-bundle induced by $\rho$ (resp. $\rho^s$), 
then there exists a
fiberwise $2$-fold covering map $S^1(\rho^s)\rightarrow S^1(\rho)$. 
Therefore the endomorphism $\varphi_2$ appears in the following commutative diagram.
\begin{equation}
\begin{CD}
\pi_1(S^1(\rho^s))@>{\tilde{\rho^s}}>> \widetilde{\mathit{SL}}(2,\R)
 @>{\subset}>> \widetilde{\mathrm{Diff}}_+^\omega S^1
\\
@V{\cap}V{\text{index $2$}}V @A{\varphi_2}A{\cong}A @AA{\varphi_2}A\\
\pi_1(S^1(\rho)) @>{\tilde{\rho}}>>\widetilde{\mathit{PSL}}(2,\R)@>{\subset}>>\widetilde{\mathrm{Diff}}_+^\omega S^1
\end{CD}
\label{eq:cd}
\end{equation}
where $\tilde{\rho}:\pi_1(S^1(\rho))\rightarrow \widetilde{\mathit{PSL}}(2,\R)$
(resp. $\tilde{\rho^s}:\pi_1(S^1(\rho^s))\rightarrow \widetilde{\mathit{SL}}(2,\R)$)
denotes the representation induced by $\rho$ (resp. $\rho^s$).
Since $S^1(\rho)$ is a closed oriented $3$-manifold, its fundamental class represents a homology
class $[S^1(\rho)]\in H_3(\widetilde{\mathit{PSL}}(2,\R);\Z)$.  In fact, it is the Gysin image of
the class $[\Sigma_2]\in H_2(\mathit{PSL}(2,\R);\Z)$ so that $[S^1(\rho)]=\mu_{\mathit{PSL}}(\rho_*([\Sigma_2]))$. Similarly 
$[S^1(\rho^s)]\in H_3(\widetilde{\mathit{SL}}(2,\R);\Z)$ and $[S^1(\rho^s)]=\mu_{\mathit{SL}}(\rho^s_*([\Sigma_2]))$.
Here $\mu_{\mathit{PSL}}$ denotes the Gysin homomorphism
$$
\mu_{\mathit{PSL}}: H_2(\mathit{PSL}(2,\R);\Z)\rightarrow H_3(\widetilde{\mathit{PSL}}(2,\R);\Z)
$$ 
and $\mu_{\mathit{SL}}$ 
denotes similar one for the group $\mathit{SL}(2,\R)$. Also let $\mu_\Gamma$ denotes the Gysin homomorphism
for the group $\Gamma_T$. 
Now by the naturality of the Gysin homomorphisms for the group $\Gamma_T$ and its subgroups $\mathit{PSL}(2,\R)$ and 
$\mathit{SL}(2,\R)$, we have
\begin{align*}
\mu_\Gamma(\sigma)&=\mu_\Gamma(\rho^s_*([\Sigma_2])-2\rho_*([\Sigma_2]))\\
&=\tilde{\rho^s}_*(\mu_{\mathit{SL}}([\Sigma_2]))-2\tilde{\rho}_*(\mu_{\mathit{PSL}}([\Sigma_2]))\\
&=(i_{\mathit{SL}})_*[S^1(\rho^s)]-2(i_{\mathit{PSL}})_*[S^1(\rho)]\in H_3(\widetilde{\Gamma}_T;\Z)\
\end{align*}
where $i_{\mathit{PSL}}: \widetilde{\mathit{PSL}}(2,\R)\rightarrow \widetilde{\Gamma}_T$ 
(resp. $i_{\mathit{SL}}: \widetilde{\mathit{SL}}(2,\R)\rightarrow \widetilde{\Gamma}_T$)
denotes the inclusion.

Now by the above commutative diagram \eqref{eq:cd}, we have
\begin{equation}
(\varphi_2)_*(2 [S^1(\rho)])=[S^1(\rho^s)].
\label{eq:ss}
\end{equation}
If we consider everything in the larger group $\widetilde{\mathit{PSL}}(2,\R)_{(2,3)}$
which contains $\widetilde{\Gamma}_T$, then we obtain the following commutative diagram
\begin{equation}
\begin{CD}
\widetilde{\mathit{SL}}(2,\R)@>{i_{\mathit{SL}}}>> \widetilde{\Gamma}_T
 @>{j^s}>{\subset}> \widetilde{\mathit{PSL}}(2,\R)_{(2,3)}
\\
@A{\varphi_2}A{\cong}A  @. @A{\cong}A{\varphi_2}A\\
\widetilde{\mathit{PSL}}(2,\R) @>{i_{\mathit{PSL}}}>>\widetilde{\Gamma}_T@>j>{\subset}>\widetilde{\mathit{PSL}}(2,\R)_{(2,3)},
\end{CD}
\label{eq:cd2}
\end{equation}
where $j, j^s$ denote inclusions.
We can now conclude that
\begin{align*}
\tilde{\mu}(\sigma)&=(j^s\circ i_{\mathit{SL}})_*[S^1(\rho^s)]-2(j\circ i_{\mathit{PSL}})_*[S^1(\rho)]\\
&=(j^s\circ i_{\mathit{SL}})_*(\varphi_2)_*(2 [S^1(\rho)])-2(j\circ i_{\mathit{PSL}})_*[S^1(\rho)]\quad \text{(by \eqref{eq:ss})}\\
&=2(\varphi_2)_*(j\circ i_{\mathit{PSL}})_*[S^1(\rho)]-2(j\circ i_{\mathit{PSL}})_*[S^1(\rho)]\quad (\text{commutativity of \eqref{eq:cd2}})\\
&=0.
\end{align*}
The last equality follows from the fact that $\varphi_2$ acts on the group $\widetilde{\mathit{PSL}}(2,\R)_{(2,3)}$
by conjugation.
This finishes the proof that $3\Z_U\subset U$ vanishes under the homomorphism $\tilde{\mu}$.

To proceed to prove the vanishing of the whole $\Z_U\subset U$, we have to enhance the above argument by
employing our technique developed 
in the preceding sections. It is a refinement of the previous one comparing certain $2$-cycles of 
$\mathit{PSL}(2,\R)$ and $\mathit{SL}(2,\R)$ adapted to the {\it relative} $2$-cycles of 
$\mathit{PSL}(2,\R)$ and $\mathit{SL}(2,\R)$ with respect to their maximal compact subgroups.
To do this,
we consider the homomorphisms $\Phi$ and $\Phi_2$
\begin{align*}
\Phi&: H_2(\mathit{PSL}(2,\R),\mathit{PSO}(2);\Z)\cong \R \rightarrow H_3(\widetilde{\mathit{PSL}}(2,\R)_{(2,3)};\Z)\rightarrow H_3(\Gamma_H;\Z)\\
\Phi&_2: H_2(\mathit{SL}(2,\R),\mathit{SO}(2);\Z)\cong \R \rightarrow H_3(\widetilde{\mathit{PSL}}(2,\R)_{(2,3)};\Z)\rightarrow H_3(\Gamma_H;\Z)
\end{align*}
introduced in $\S 5$. We set
$$
\tau=\left(\frac{4}{3},\frac{1}{3}\right)\in U
$$
so that $\chi(\tau)=1, \alpha(\tau)=0$ and $\tau=1\in \Z_U\subset U$. We can write
$\tau=(\tau_1,\tau_2)$ where
\begin{align*}
\tau_1&= \frac{4}{3}\in H_2(\mathit{PSL}(2,\R),\mathit{PSO}(2);\Z)\cong \R\\
\tau_2&= \frac{1}{3}\in H_2(\mathit{SL}(2,\R),\mathit{SO}(2);\Z)\cong \R.
\end{align*}
Then there are compact oriented surfaces with one boundary component of genus $g_i\ (i=1,2)$, denoted by
$\Sigma_{g_i}^0=\Sigma_{g_i}\setminus \mathrm{Int}\, D^2\ (i=1,2)$,
and homomorphisms
\begin{align*}
\rho_1&: \pi_1(\Sigma_{g_1}^0,p_1)\rightarrow \mathit{PSL}(2,\R)\\
\rho_2&: \pi_1(\Sigma_{g_2}^0,p_2)\rightarrow \mathit{SL}(2,\R)
\end{align*}
such that they represent the classes $\tau_1, \tau_2$ respectively,
where $p_i\in \partial \Sigma_{g_i}^0\ (i=1,2)$ denote base points.
Therefore the restrictions of the homomorphisms $\rho_i$ to the boundaries
\begin{align*}
\rho_1|_{\partial \Sigma_{g_1}}&: \pi_1(\partial \Sigma_{g_1}^0,p_1)\cong\Z \rightarrow \mathit{PSO}(2)\\
\rho_2|_{\partial \Sigma_{g_2}}&: \pi_1(\partial \Sigma_{g_2}^0,p_2)\cong \Z\rightarrow \mathit{SO}(2)
\end{align*}
are given by
\begin{align*}
\Z\ni 1\mapsto \frac{1}{3}\ (=\text{$120^\circ$\ rotation})\ \in \mathit{PSO}(2)\cong \R/\Z\\
\Z\ni 1\mapsto \frac{1}{3}\ (=\text{$120^\circ$\ rotation})\ \in \mathit{SO}(2)\cong \R/\Z
\end{align*}
where both $\mathit{PSO}(2)$ and $\mathit{SO}(2)$ are canonically identified with $\R/\Z$.
Furthermore the homomorphisms $\rho_i$ can be lifted to
\begin{align*}
\tilde{\rho}_1&: \pi_1(\Sigma_{g_1}^0,p_1)\rightarrow \widetilde{\mathit{PSL}}(2,\R)\\
\tilde{\rho}_2&: \pi_1(\Sigma_{g_2}^0,p_2)\rightarrow \widetilde{\mathit{SL}}(2,\R)
\end{align*}
whose restrictions to the boundaries 
\begin{align*}
\tilde{\rho}_1|_{\partial \Sigma_{g_1}}&: \pi_1(\partial \Sigma_{g_1}^0,p_1)\cong\Z \rightarrow \Z\subset \widetilde{\mathit{PSO}}(2)\\
\tilde{\rho}_2|_{\partial \Sigma_{g_2}}&: \pi_1(\partial \Sigma_{g_2}^0,p_2)\cong \Z\rightarrow \Z\subset \widetilde{SO}(2)
\end{align*}
satisfy the conditions
\begin{align*}
\Z\ni 1\mapsto \frac{4}{3}\in \Z\subset \widetilde{\mathit{PSO}}(2)\\
\Z\ni 1\mapsto \frac{1}{3}\in \Z\subset \widetilde{SO}(2)
\end{align*}
(the translation lengths).
Now we glue the two compact surfaces $\Sigma_{g_i}^0\ (i=1,2)$ along their boundaries,
matching the base points and orientations, to make a closed oriented surface 
$\Sigma_g\ (g=g_1+g_2)$. Here the orientation on it is given as $\Sigma_{g_1}^0-\Sigma_{g_2}^0$.
Then we can construct a homomorphism
$$
\rho: \pi_1(\Sigma_g,p)\rightarrow \Gamma_T=\mathit{PSL}(2,\R)*_{\mathit{PSO}(2)=\mathit{SO}(2)} \mathit{SL}(2,\R)
$$
where $p\in \Sigma_g$ denotes the base point obtained by glueing $p_1$ and $p_2$.
By the theorem of van Kampen, we have
$$
\pi_1(\Sigma_g,p)=\pi_1(\Sigma_{g_1}^0,p_1)*_{\pi_1(\partial \Sigma_{g_1}^0,p_1)=\pi_1(\partial \Sigma_{g_2}^0,p_2)}\pi_1(\Sigma_{g_2}^0,p_2).
$$
Then we can define $\rho$ by setting
$$
\rho|_{\pi_1(\Sigma_{g_i}^0,p_i)}=\rho_i\ (i=1,2)
$$
because 
$$
\rho_1|_{\partial \Sigma_{g_1}^0}=\rho_2|_{\partial \Sigma_{g_2}^0}
$$
once we identify $\mathit{PSO}(2)=\mathit{SO}(2)$.
Now we set 
$$
\tilde{\tau}=\rho_*([\Sigma_g])\in H_2(\Gamma_T;\Z).
$$
By the above construction, we see that
$$
H_2(\Gamma_T;\Z)\ni \tilde{\tau}\mapsto \tau\in H_2(\Gamma_T;\Z)/K^0_2(\R)
$$
under the natural projection. Now, by the construction of the homomorphism $\rho$,
we see that
$$
\tilde{\mu}(\tilde{\tau})=\tilde{\tau}_1-\tilde{\tau}_2.
$$
Namely, the homology class $\tilde{\mu}(\tilde{\tau})$ is represented by the 
$3$-cycle $\tilde{\tau}_1-\tilde{\tau}_2$.
On the other hand, we can also see that the following equality holds
\begin{equation}
\tilde{\mu}(\tilde{\tau})=\Phi\left(\frac{4}{3}\right)-\Phi_2\left(\frac{1}{3}\right).
\label{eq:mm}
\end{equation}
This is the key fact in our argument. The point here is that the ``real analytic cap"
appearing in the first term is exactly equal to that of the second term so that they cancel
each other in the above expression.
(cf. the commutative diagram \eqref{eq:cdg}). 
More explicitly, this can be seen as follows.
By the definitions of $\Phi$ and $\Phi_2$
(see Definition \ref{def:Phi} and Definition \ref{def:Phi2}), we have
\begin{align*}
\Phi\left(\frac{4}{3}\right)&=\tilde{\tau}_1-\Psi(\partial \tilde{\tau}_1)\\
\Phi_2\left(\frac{1}{3}\right)&=\tilde{\tau}_2-\Psi(\partial \tilde{\tau}_2).
\end{align*}
Since 
$$
\frac{4}{3}\equiv \frac{1}{3}\ \mathrm{mod}\ 1,
$$
we have
$$
1\wedge \frac{4}{3} =1\wedge \frac{1}{3}\in H_2(\R;\Z)\ \Rightarrow \ 
\partial \tilde{\tau}_1=\partial \tilde{\tau}_2.
$$
Hence
$$
\Psi(\partial \tilde{\tau}_1)=\Psi(\partial \tilde{\tau}_2).
$$
This verifies the above cancellation and thereby
confirms the above equality \eqref{eq:mm}.

Now by Proposition \ref{prop:comp}, we know that $\Phi_2(s)=4\Phi(s)$ for any $s$.
We can now conclude that
$$
\tilde{\mu}(\tilde{\tau})=\Phi\left(\frac{4}{3}\right)-\Phi_2\left(\frac{1}{3}\right)
=4\Phi\left(\frac{1}{3}\right)-\Phi_2\left(\frac{1}{3}\right)=0.
$$
Since $\tau$ represents the generator of the summand $\Z_U\subset U$, this completes the proof of {\it Second step}
and hence that of Theorem \ref{th:maint}.
\qed

\vspace{5mm}
\noindent
{\it Proof of Theorem \ref{th:cao} }\\

By Proposition \ref{prop:ca}, there exists a short exact sequence
\begin{equation}
0\rightarrow K^0_2(\R)\rightarrow H_2(\Gamma_T;\Z)
\overset{(\chi,\alpha)}{\rightarrow}\Z\oplus\R\rightarrow 0.
\label{eq:kh}
\end{equation}
It follows that there is a short exact sequence
$$
0\rightarrow K^0_2(\R)\rightarrow \mathrm{Ker}\ \chi\ (\subset H_2(\Gamma_T;\Z))
\overset{\alpha}{\rightarrow}\R\rightarrow 0.
$$
Recall that $K^0_2$ is a $\Q$ vector space. It is easy to see that group $\mathrm{Ker}\ \chi$
is a torsion free divisible group and so that it is a $\Q$ vector space.
Now it is easy to see that the extension \eqref{eq:kh} is a split extension if we assume the Axiom of Choice.
The required result follows from this.
\qed

\begin{problem}
Let 
$
\rho_T: H_2(\Gamma_T;\Z)\rightarrow H_2(\mathrm{Diff}_+^{\omega} S^1;\Z)
$
be the natural homomorphism. Prove (or disprove) that it is
\begin{align*}
&(i)\quad \text{trivial on $K^0_2(\R)\subset H_2(\Gamma_T;\Z)$}\\
&(ii)\quad \text{surjective}
\end{align*}
so that
$$
H_2(\mathrm{Diff}_+^{\omega} S^1;\Z)\cong \Z\oplus \R.
$$
\end{problem}

As for the first question $(i)$ above,
since $K^0_2(\R)$ comes from the inclusion $i: \mathit{SO}(2)\rightarrow \mathrm{Diff}_+^\omega S^1$
by Tsuboi's result,
we may more generally ask the following.

\begin{question} 
 What can be said about the following homomorphism?
 $$
 i_*:H_*(\mathit{SO}(2);\Z)
 \rightarrow H_*(\mathrm{Diff}_+^\omega S^1;\Z).
 $$
 The case of degree $3$ is particularly important because the triviality
 (or non-triviality) of the homomorphism
 $$
 H_3(\mathit{SO}(2);\Z)
 \supset \wedge^3_\Z(\Q/\Z)\cong \Q/\Z\rightarrow H_3(\mathrm{Diff}_+^\omega S^1;\Z)
 $$
 is closely related to the question of non-triviality (or triviality) of $\chi^2\in H^4(\mathrm{Diff}_+^\omega S^1;\Q)$
 (see \cite{kmm}).
\end{question}

\begin{problem}
Determine whether the natural homomorphism
\begin{equation}
H^*_{GF}(S^1,\mathit{SO}(2))\cong\R[\chi,\alpha]/(\chi\alpha)\rightarrow H^*(\mathrm{Diff}_+^\omega S^1;\R)
\label{eq:bh}
\end{equation}
introduced by Bott and Haefliger
is injective or not.
Here the left hand side denotes the Gel'fand-Fuchs cohomology of the circle relative to the
rotation group $\mathit{SO}(2)$ and the isomorphism is due to Haefliger based on the fundamental work of 
Gel'fand and Fuchs.
\label{prob:gf}
\end{problem}

Thurston proved the injectivity of the homomorphism
\eqref{eq:bh} in degree $2$ by considering his group $\Gamma_T$ described above (see \eqref{eq:TG}), and 
in the smooth case,  he also proved
the non-triviality of 
all the powers of $\alpha$ based on results of Mather.
Then it was proved in \cite{morita} that the above homomorphism with the target
replaced by $H^*(\mathrm{Diff}_+^\infty S^1;\R)$ is injective.

However, in the real analytic case, almost nothing is known beyond Thurston's results.
Even the problem of determining non-triviality of the square
of the rational Euler class $\chi$ is already very difficult and that of higher powers of $\chi$ gets more and more
difficult (as for the {\it integral}
Euler class, Nariman \cite{nariman} proved non-triviality of all the powers of it).
Moreover non-triviality of the powers $\alpha^k\ (k=2,3,\ldots)$ of $\alpha$ should be considered as a big mystery.
Certainly completely new ideas must be necessary to challenge this problem.

\vspace{7mm}
\begin{center}
Appendix
\end{center}

The content of this appendix is not used in other part of the present paper.

We 
consider the assertion of Parry-Sah \cite{ps} (p. 191, (2.40)) mentioned above which implies that
the homomorphism
\begin{equation}
\mu: H_2(\mathit{SL}(2,\R);\Z)\cong K^0_2(\R)\oplus \Z\subset H_3(\widetilde{\mathit{SL}}(2,\R);\Z)
\label{eq:mui2}
\end{equation}
is injective.
The authors mentioned that this result {\it ``would follow from the vanishing of the square of $e$ (the Euler class)"}.

This is true for the summand $\Z\subset H_2(\mathit{SL}(2,\R);\Z)$ because by the well known property of the 
Gysin exact sequence, the injectivity under $\mu$ of this summand $\Z$, which is 
detected by the Euler class,
is equivalent to the vanishing of the square of the rational Euler class $\chi_\Q^2$
in $H^4(\mathit{SL}(2,\R);\Q)$.
But this vanishing follows from the fact that the Lie algebra cohomology of 
$\mathit{SL}(2,\R)$ is given as $H^*(\mathfrak{sl}(2,\R),\mathit{SO}(2))\cong \R[\chi]/(\chi^2)$
(cf. Milnor \cite{milnor}).
We can extend this argument to the cases of larger groups such as $\Gamma_T$ and $G_T$
so that the vanishing $\chi^2_\Q=0$ holds in $H^4(\Gamma_T;\Q)$ and $H^4(G_T;\Q)$ as well. 
It is an important problem to
determine whether this continues to hold
in $H^4(\mathrm{Diff}_+^\omega S^1;\Q)$ or not (see Problem \ref{prob:gf} above).

On the other hand however, we think
that the injectivity under $\mu$ in \eqref{eq:mui2} of the other
submodule $K^0_2(\R)\subset H_2(\mathit{SL}(2,\R);\Z)$ is {\it not} a consequence of
the vanishing of the square of the Euler class. A simple counter-example: 
in the $S^1$-bundle $\text{(Hopf map,id)}: S^3\times S^2\rightarrow S^2\times S^2$,
$\chi^2=0$ but
$\mu: H_2(S^2\times S^2;\Z)\rightarrow H_3(S^3\times S^2;\Z)$
is not an injection.
It needs another argument.
The authors gave, what they called, a direct argument
to prove this result. However, they only gave the homomorphism $\mu$ at the cycle level
by constructing $3$-dimensional cycles in the target $H_3(\widetilde{\mathit{SL}}(2,\R);\Z)$ associated to any $2$-dimensional
cycle in $H_2(A;\Z)$ which represents elements of $K^0_2(\R)$ as well as a $2$-dimensional cycle representing
the generator of the summand $\Z\subset H_2(\mathit{SL}(2,\R);\Z)$. The authors called the resulting $3$-cycles 
{\it ``lifts"} of the original $2$-cycles. 
As far as we understand, this does not prove the required results. It is necessary to
show that these cycles are {\it homologically non-trivial}.
We could not find a proof for this injectivity. However, this does not affect our argument for
the proof of Theorem \ref{th:maint}.

\section{Proof of Theorem \ref{th:mainb}}

Tsuboi's homomorphism \eqref{eq:th} mentioned in $\S$ 2, was defined as follows.
First he proved that
\begin{align*}
H_2(G_T,\mathit{SO}(2);\Z)&\cong \bigoplus_{n=1}^\infty H_2(\mathit{PSL}(2,\R)^{(n)},\mathit{SO}(2);\Z)\\
H_2(\mathit{PSL}(2,\R)^{(n)}&,\mathit{SO}(2);\Z)\cong \R\quad (n=1,2,\cdots)
\end{align*}
and then constructed homomorphisms
$$
t_n: H_2(\mathit{PSL}(2,\R)^{(n)},\mathit{SO}(2);\Z)\rightarrow H_3(B\overline{\Gamma}^\infty_1;\Z)
$$
for all $n$. Finally he proved that the equality $t_n(s)=n^2 t_1(s)\, (s\in\R)$ holds for any $n$ which
implies his main theorem of \cite{tsuboi84}.

To make a real analytic lift of the homomorphism $t_n$,
we observe that the following commutative diagram holds:
$$
\begin{CD}
\widetilde{\mathit{PSL}}(2,\R)  @>{\subset}>> \widetilde{\mathrm{Diff}}_+^\omega S^1 @>{\subset}>>\Gamma_H \\
 @V{\varphi_n}VV   @V{\varphi_n}V{}V  @V{\varphi_n}V{}V\\ 
\widetilde{\mathit{PSL}}(2,\R)^{(n)} @>{\subset}>> \widetilde{\mathrm{Diff}}_+^\omega S^1 @>{\subset}>> \Gamma_H.
\end{CD}
$$
Namely the universal cover $\widetilde{\mathit{PSL}}(2,\R)^{(n)}$ of $\mathit{PSL}(2,\R)^{(n)}$, considered as a subgroup of
$\widetilde{\mathrm{Diff}}_+^\omega S^1$, is equal to the image under $\varphi_n$ of $\widetilde{\mathit{PSL}}(2,\R)$
(cf. \cite{morita}). Keeping this in mind, we define
$$
\widetilde{\mathit{PSL}}(2,\R)_{(2,3,n)}=\widetilde{\mathit{PSL}}(2,\R)*_{\R} (\R\rtimes Z_{(2,3,n)})
$$
where $Z_{(2,3,n)}$ be the subgroup of $\R^+$ generated by three elements $S_2,S_3,S_n$
(if $n$ does not have prime factor other than $2,3$, then $Z_{(2,3,n)}=Z_{(2,3)}$. But this does
not affect the following argument).
Since the action of $\varphi_n$ is equal to the conjugation by the element $S_n^{-1}$,
we can conclude that $\widetilde{\mathit{PSL}}(2,\R)^{(n)}$ is contained in the group $\widetilde{\mathit{PSL}}(2,\R)_{(2,3,n)}$.

\begin{prop}
If $n$ has a prime factor other than $2,3$, then we have 
$$
H_3(\R\rtimes Z_{(2,3,n)};\Z)\cong  \Z\oplus H_2(\R;\Z)
\cong \Z\oplus H_3(\R\rtimes Z_{(2,3)};\Z).
$$
\label{prop:z}
\end{prop}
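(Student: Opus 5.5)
The plan is to treat $G_n:=\R\rtimes Z_{(2,3,n)}$ as an extension of the smaller group $N:=\R\rtimes Z_{(2,3)}$ studied in Proposition \ref{prop:b}. Let $p$ be a prime factor of $n$ other than $2,3$. Then $S_2,S_3,S_n$ are multiplicatively independent, so $Z_{(2,3,n)}\cong\Z^3$. Since $Z_{(2,3,n)}$ is abelian, $N$ is normal in $G_n$, and
$$
G_n\cong N\rtimes\langle S_n\rangle,\qquad \langle S_n\rangle\cong\Z .
$$
For an extension by $\Z$, the Hochschild--Serre spectral sequence has only two columns. It therefore degenerates to the Wang-type short exact sequence
$$
0\rightarrow H_0(\Z;H_3(N;\Z))\rightarrow H_3(G_n;\Z)\rightarrow H_1(\Z;H_2(N;\Z))\rightarrow 0 .
$$
Here $H_1(\Z;M)$ is the module of invariants $M^{S_n}$.

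For the first term, I will use Proposition \ref{prop:b}, which gives $H_3(N;\Z)=0$. This term is therefore $H_0(\Z;H_3(N;\Z))\cong H_3(\R\rtimes Z_{(2,3)};\Z)$; this is the factor recorded in the last expression of the statement.

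For the third term, I compute $H_2(N;\Z)$ by the Hochschild--Serre spectral sequence for $\R\to N\to Z_{(2,3)}$. The coefficient groups $H_q(\R;\Z)\cong\wedge^q_\Z\R$, $q\geq1$, are $\Q$-vector spaces, and $S_2$ acts on them by multiplication by $2^q\neq1$. In the Koszul complex computing $H_*(\Z^2;V)$, multiplication by $2^q-1$ is both invertible and null-homotopic, so all groups $E^2_{p,q}$ with $q\geq1$ vanish. This is the Rozhe-type argument already used in the proof of Proposition \ref{prop:b}. Hence $H_2(N;\Z)\cong H_2(Z_{(2,3)};\Z)\cong\Z$. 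Conjugation by $S_n$ acts trivially on this group, because it acts trivially on the abelian quotient $Z_{(2,3)}$. So the invariant term is $\Z$, and the sequence splits because $\Z$ is free. This gives the summand $\Z$ together with the factor $H_3(\R\rtimes Z_{(2,3)};\Z)$, i.e. the outer isomorphism of the statement.

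The step I expect to be the main obstacle is the middle expression $\Z\oplus H_2(\R;\Z)$. The computation above gives no absolute contribution from $\wedge^2_\Z\R$, since $H_3(N;\Z)=0$ and the $\R$-part of $H_2(N;\Z)$ has already been killed. So the $H_2(\R;\Z)$ term must be identified through the pair $(G_n,\R)$, using the exact sequence \eqref{eq:les} with $Z_{(2,3)}$ replaced by $Z_{(2,3,n)}$, exactly as in the proof of Proposition \ref{prop:b}. There $\partial$ maps the relative group $H_3(G_n,\R;\Z)$ onto $H_2(\R;\Z)$, and the surjectivity again comes from the coprimality of $3$ and $8$. The plan is to write down this identification of the $H_2(\R;\Z)$ factor via $\partial$ and the real analytic caps of Proposition \ref{prop:psi}. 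The consistency check is that this factor must vanish in the absolute group, in agreement with $H_3(\R\rtimes Z_{(2,3)};\Z)=0$. If it does not vanish there, the middle expression has to be read relatively, and I would record the result in the form $H_3(G_n;\Z)\cong\Z\oplus H_3(\R\rtimes Z_{(2,3)};\Z)\cong\Z$.
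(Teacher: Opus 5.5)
Your computation of the absolute group is correct. The Wang sequence for $\R\rtimes Z_{(2,3)}\to\R\rtimes Z_{(2,3,n)}\to\langle S_n\rangle$ uses two facts: $H_3(\R\rtimes Z_{(2,3)};\Z)=0$, and $H_2(\R\rtimes Z_{(2,3)};\Z)\cong H_2(\Z^2;\Z)\cong\Z$ with trivial $S_n$-action. Together they give $H_3(\R\rtimes Z_{(2,3,n)};\Z)\cong\Z$.

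The paper reaches this in one step, from the Hochschild--Serre spectral sequence of $\R\to\R\rtimes Z_{(2,3,n)}\to\Z^3$. All rows $q\geq 1$ vanish by the same Rozhe-type argument, so only $E^2_{3,0}=H_3(\Z^3;\Z)\cong\Z$ survives. Your two-stage version is equivalent; it just exhibits the $\Z$ as the fundamental class of the torus $Z_{(2,3,n)}\cong\Z^3$.

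You are also right that the middle expression cannot be the absolute group. Since $H_2(\R;\Z)\cong\wedge^2_\Q\R$ is a nonzero (indeed huge) $\Q$-vector space, there is no ``if'' about this. The printed statement has dropped the pair. What the paper's proof establishes, and what Corollary \ref{cor:unique} and Proposition \ref{prop:Phin} use, is
\[
H_3(\R\rtimes Z_{(2,3,n)},\R;\Z)\cong\Z\oplus H_2(\R;\Z)\cong\Z\oplus H_3(\R\rtimes Z_{(2,3)},\R;\Z).
\]
The last group is relative as well, and it is isomorphic to $H_2(\R;\Z)$ by Proposition \ref{prop:b}.

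The gap is that you leave this relative statement as a hedged plan. In the end you record only $\Z\cong\Z\oplus 0$, which is literally true but is not the content of the proposition. Two steps are needed to finish.

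\textbf{Exactness.} The map $H_k(\R;\Z)\to H_k(\R\rtimes Z_{(2,3,n)};\Z)$ is zero for $k=2,3$, because it factors through $\R\rtimes Z_{(2,3)}$, where it vanishes by Proposition \ref{prop:b}. The long exact sequence of the pair therefore gives
\[
0\to \Z\to H_3(\R\rtimes Z_{(2,3,n)},\R;\Z)\overset{\partial}{\to} H_2(\R;\Z)\to 0 .
\]

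\textbf{Splitting.} This is not automatic. $H_2(\R;\Z)$ contains $\Q$ as a direct summand and $\mathrm{Ext}(\Q,\Z)\cong\R\neq 0$ by Theorem \ref{th:wiegold}, so extensions of this group by $\Z$ need not split. The paper splits the sequence with the explicit right inverse
\[
H_2(\R;\Z)\overset{\Psi}{\to} H_3(\R\rtimes Z_{(2,3)},\R;\Z)\to H_3(\R\rtimes Z_{(2,3,n)},\R;\Z).
\]
This is a section of $\partial$ by naturality of the boundary map under the inclusion of pairs; concretely, it is given by the caps $C\langle s_1,s_2\rangle$ of Proposition \ref{prop:psi}, which already live in $C_3(\R\rtimes Z_{(2,3)};\Z)$.

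With these two points added, your argument becomes a complete proof of the intended statement.
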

\begin{proof}
The assumption implies that $Z_{(2,3,n)}\cong\Z^3$.
Then the argument in the proof of Proposition \ref{prop:b} is valid except for the term
$E^2_{3,0}=H_3(Z_{(2,3,n)};H_0(\R;\Z))\cong H_3(\Z^3;\Z)\cong\Z$ so that
$H_3(\R\rtimes Z_{(2,3,n)};\Z)\cong\Z$. Thus we have the following short
exact sequence (compare Equation \eqref{eq:ses})
$$
0\rightarrow H_3(\R\rtimes Z_{(2,3,n)};\Z)\cong\Z\rightarrow
H_3(\R\rtimes Z_{(2,3,n)},\R;\Z)\overset{\partial}{\rightarrow} H_2(\R;\Z)\rightarrow 0.
$$
There is a right inverse to the boundary homomorphism $\partial$ which is 
given by 
$$
H_2(\R;\Z)\overset{\Psi}{\rightarrow} H_3(\R\rtimes Z_{(2,3)},\R;\Z)
\overset{\subset}{\rightarrow} H_3(\R\rtimes Z_{(2,3,n)},\R;\Z)
$$
which we denote simply by $\Psi$.
The result follows.
\end{proof}

\begin{cor}
The right inverse to the boundary homomorphism 
$$
\partial: H_3(\R\rtimes Z_{(2,3,n)},\R;\Z)\rightarrow H_2(\R;\Z)
$$
is uniquely defined which is $\Psi$ in the above proposition.
\label{cor:unique}
\end{cor}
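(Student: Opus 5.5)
The plan is to reduce uniqueness of the right inverse to the vanishing of a Hom group. Suppose $\Psi'$ is any homomorphism $H_2(\R;\Z)\rightarrow H_3(\R\rtimes Z_{(2,3,n)},\R;\Z)$ with $\partial\circ\Psi'=\mathrm{id}$. Then $\partial\circ(\Psi'-\Psi)=0$, so $\Psi'-\Psi$ takes values in $\mathrm{Ker}\,\partial$. By the short exact sequence in the proof of Proposition \ref{prop:z}, $\mathrm{Ker}\,\partial$ is the image of $H_3(\R\rtimes Z_{(2,3,n)};\Z)$, and the latter injects into the relative group. Hence $\Psi'-\Psi$ factors through a homomorphism
$$
H_2(\R;\Z)\rightarrow H_3(\R\rtimes Z_{(2,3,n)};\Z).
$$

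When $n$ has a prime factor other than $2,3$, the Hochschild--Serre computation in the proof of Proposition \ref{prop:z} gives $H_3(\R\rtimes Z_{(2,3,n)};\Z)\cong\Z$. This $\Z$ comes from $E^2_{3,0}=H_3(\Z^3;\Z)$, and all other relevant $E^2$-terms vanish exactly as in Proposition \ref{prop:b}. Otherwise $Z_{(2,3,n)}=Z_{(2,3)}$, and $H_3(\R\rtimes Z_{(2,3)};\Z)=0$ by Proposition \ref{prop:b}, so uniqueness is immediate in that case. So in every case the target is either $0$ or $\Z$.

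On the other side, $H_2(\R;\Z)\cong\wedge^2_\Z\R$. Since $\R$ is a $\Q$-vector space, so is $\wedge^2_\Z\R\cong\wedge^2_\Q\R$, so $H_2(\R;\Z)$ is uniquely divisible. Any homomorphism from a divisible group to $\Z$ is zero, because its image is a divisible subgroup of $\Z$, and the only such subgroup is $0$. Hence $\mathrm{Hom}(H_2(\R;\Z),\Z)=0$, so $\Psi'-\Psi=0$ and $\Psi'=\Psi$. In particular the right inverse coincides with the composite of $\Psi$ for $Z_{(2,3)}$ with the inclusion.

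The only step requiring care is identifying $\mathrm{Ker}\,\partial$ exactly with a copy of $\Z$ (or $0$). I will cite this from Proposition \ref{prop:z} and the vanishing statements inside the proof of Proposition \ref{prop:b}. The vanishing of $E^2_{1,2}$ and $E^2_{2,1}$ for $\Z^3$ follows from Rozhe's result just as for $\Z^2$. Granting that, the divisibility argument is immediate.
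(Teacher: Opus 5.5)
Your proposal is correct and follows essentially the same route as the paper: the difference $\Psi'-\Psi$ lands in $\mathrm{Ker}\,\partial\cong\Z$, and since $H_2(\R;\Z)$ is uniquely divisible there is no nonzero homomorphism to $\Z$. Your separate treatment of the case where $n$ has no prime factor other than $2,3$ (where the kernel is $0$) is a harmless refinement that the paper leaves implicit.
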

\begin{proof}
Let $\Psi'$ be any right inverse to $\partial$. Then $\Psi'-\Psi$ is a homomorphism
$H_2(\R;\Z)\rightarrow \mathrm{Ker}\,\partial\cong\Z$. Since $H_2(\R;\Z)$ is a vector space
over $\Q$ (or uniquely divisible group), there is no non-trivial homomorphism to $\Z$.
Hence $\Psi'=\Psi$.
\end{proof}

Based on these facts, we make the following definition of
our real analytic lift of Tsuboi's homomorphism $t_n$.

\begin{definition}
We define a homomorphism
$$
\Phi_n: H_2(\mathit{PSL}(2,\R)^{(n)},\mathit{SO}(2);\Z)\cong \R \rightarrow H_3(\widetilde{\mathit{PSL}}(2,\R)_{(2,3,n)};\Z)\rightarrow H_3(\Gamma_H;\Z)
$$
as follows. For any relative homology class $\sigma\in H_2(\mathit{PSL}(2,\R)^{(n)},\mathit{SO}(2);\Z)$
we set
$$
\Phi_n(\sigma)=\tilde{\sigma}-\Psi(\partial\tilde{\sigma})\in 
H_3(\widetilde{\mathit{PSL}}(2,\R)_{(2,3,n)};\Z)\ \text{and then in}\  
H_3(\Gamma_H;\Z).
$$
\label{def:Phin}
\end{definition}


\begin{prop}
The above definition is well defined.
\label{prop:Phin}
\end{prop}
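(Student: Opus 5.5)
The plan is to copy the proof of Proposition \ref{prop:Phi}, replacing $\widetilde{\mathit{PSL}}(2,\R)$ by $\widetilde{\mathit{PSL}}(2,\R)^{(n)}$ and $Z_{(2,3)}$ by $Z_{(2,3,n)}$. The only new issue is that, by Proposition \ref{prop:z}, $H_3(\R\rtimes Z_{(2,3,n)};\Z)$ may now be $\Z$ rather than $0$.

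\textbf{Step 1: the chain construction.} Take a chain $c\in C_3(\widetilde{\mathit{PSL}}(2,\R)^{(n)};\Z)$ representing $\tilde\sigma\in H_3(\widetilde{\mathit{PSL}}(2,\R)^{(n)},\R;\Z)$. Its boundary $\partial c$ represents $1\wedge s\in H_2(\R;\Z)$, where $s$ is the parameter of $\sigma$. Take $d$ to be the explicit real analytic cap $C\langle 1,s\rangle\in C_3(\R\rtimes Z_{(2,3)};\Z)\subset C_3(\R\rtimes Z_{(2,3,n)};\Z)$ of Proposition \ref{prop:psi}. Then $\partial c-\partial d$ is a boundary in $C_*(\R;\Z)$, say $\partial c-\partial d=\partial e$ (equivalently, subdivide as in the earlier proof). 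Regard $c$, $d$, $e$ as chains of $\widetilde{\mathit{PSL}}(2,\R)_{(2,3,n)}$. This requires the inclusion $\widetilde{\mathit{PSL}}(2,\R)^{(n)}=\varphi_n(\widetilde{\mathit{PSL}}(2,\R))\subset \widetilde{\mathit{PSL}}(2,\R)_{(2,3,n)}$, which holds because $\varphi_n$ is conjugation by $S_n^{-1}$; one also uses $\widetilde{\mathit{PSL}}(2,\R)^{(n)}*_\R(\R\rtimes Z_{(2,3,n)})=\widetilde{\mathit{PSL}}(2,\R)_{(2,3,n)}$. Then $c-d-e$ is a $3$-cycle, and $\Phi_n(\sigma)$ is defined as its class.

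\textbf{Step 2: independence of choices.} Write $G=\widetilde{\mathit{PSL}}(2,\R)_{(2,3,n)}$. Use the long exact sequence of the pair $(G,\R)$ together with excision:
$$
H_3(G,\R;\Z)\cong H_3(\widetilde{\mathit{PSL}}(2,\R)^{(n)},\R;\Z)\oplus H_3(\R\rtimes Z_{(2,3,n)},\R;\Z).
$$
The map $H_3(\R;\Z)\to H_3(G;\Z)$ is zero, because it already factors through $H_3(\R\rtimes Z_{(2,3)};\Z)$, where it vanishes by the $7,26$ argument of Proposition \ref{prop:b}. Hence $H_3(G;\Z)$ injects into $H_3(G,\R;\Z)$ and is identified with $\mathrm{Ker}\,\partial$, as in \eqref{eq:ker}. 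The class $[c-d-e]$ corresponds to the pair $(\tilde\sigma,\Psi(\partial\tilde\sigma))$. Changing $c$ within its relative class does not change $\tilde\sigma$, and changing $e$ alters the cycle only by a chain in $\R$ whose contribution dies by the same vanishing. So it remains to check that the second component is canonical.

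\textbf{Step 3: the main obstacle.} If $d$ were replaced by any other relative cycle $d'$ with the same boundary class, then $[d']-[d]$ would lie in $\mathrm{Ker}\,\partial\cong H_3(\R\rtimes Z_{(2,3,n)};\Z)\cong\Z$, which is now possibly nonzero. This is exactly where Corollary \ref{cor:unique} is needed. Because we insist that the second component be $\Psi(\partial\tilde\sigma)$, and $\Psi$ is the \emph{unique} right inverse of $\partial$ (there are no nonzero homomorphisms from the $\Q$-vector space $H_2(\R;\Z)$ to $\Z$), the relative class of $d$ is forced. Moreover, $\sigma\mapsto\Psi(\partial\tilde\sigma)$ is additive, so $\Phi_n$ is a homomorphism. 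I expect the subtle point to be only this bookkeeping: showing that the ambiguity $\Z$ never enters. Corollary \ref{cor:unique}, together with the vanishing of $H_3(\R;\Z)$ in $H_3(G;\Z)$, settles it. Composing with the inclusion into $\Gamma_H$ then gives the well-defined class in $H_3(\Gamma_H;\Z)$.
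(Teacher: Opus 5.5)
Your proposal is correct and follows the paper's own argument: the long exact sequence of the pair $(\widetilde{\mathit{PSL}}(2,\R)_{(2,3,n)},\R)$, the vanishing of $H_3(\R;\Z)$ there via the $7,26$ argument, and excision together identify $H_3(\widetilde{\mathit{PSL}}(2,\R)_{(2,3,n)};\Z)$ with $\mathrm{Ker}\,\partial$, and the pair $(\tilde\sigma,\Psi(\partial\tilde\sigma))$ lies in that kernel. Your excision uses the decomposition $\widetilde{\mathit{PSL}}(2,\R)^{(n)}*_\R(\R\rtimes Z_{(2,3,n)})$, as in Proposition \ref{prop:Phi2}, which is if anything more precise than the paper's text; Corollary \ref{cor:unique} is not actually needed here, since $\Psi$ is a fixed homomorphism in Definition \ref{def:Phin} (the paper uses it only later, in Proposition \ref{prop:compn}), though citing it does no harm.
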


\begin{proof}
As in the cases of Proposition \ref{prop:Phi} and Proposition \ref{prop:Phi2},
we have to check that the homology class of the cycle $\tilde{\sigma}-\Psi(\partial\tilde{\sigma})$
is uniquely determined independent of various choices made.
The long exact sequence of the homology of the pair
$(\widetilde{\mathit{PSL}}(2,\R)_{(2,3,n)},\R)$
$=(\widetilde{\mathit{PSL}}(2,\R)*_\R \R\rtimes Z_{(2,3,n)},\R)$ is given by
$$
\cdots\rightarrow H_3(\R;\Z)\overset{\text{$0$-map}}{\rightarrow}
H_3(\widetilde{\mathit{PSL}}(2,\R)_{(2,3,n)};\Z)
\rightarrow H_3(\widetilde{\mathit{PSL}}(2,\R)_{(2,3,n)},\R;\Z)\overset{\partial}{\rightarrow} H_2(\R;\Z) \rightarrow \cdots.
$$
By the excision, we have
$$
H_3(\widetilde{\mathit{PSL}}(2,\R)_{(2,3,n)},\R;\Z)\cong H_3(\widetilde{\mathit{PSL}}(2,\R),\R;\Z)\oplus
H_3(\R\rtimes Z_{(2,3,n)},\R;\Z).
$$
Therefore, we can conclude that
\begin{equation}
\begin{split}
H_3(\widetilde{\mathit{PSL}}&(2,\R)_{(2,3,n)};\Z)\cong \\
\mathrm{Ker}&\left(\partial: H_3(\widetilde{\mathit{PSL}}(2,\R),\R;\Z)\oplus
H_3(\R\rtimes Z_{(2,3,n)},\R;\Z)
\rightarrow H_2(\R;\Z)\right).
\end{split}
\label{eq:kern}
\end{equation}
The extra $\Z$-summand in Proposition \ref{prop:z} vanishes under $\partial$.
Hence the right hand side of \eqref{eq:kern} is isomorphic to
$$
\Z\oplus \mathrm{Ker}\left(\partial: H_3(\widetilde{\mathit{PSL}}(2,\R),\R;\Z)\oplus
H_3(\R\rtimes Z_{(2,3)},\R;\Z)
\rightarrow H_2(\R;\Z)\right)
$$
which in turn is isomorphic to 
$$
\Z\oplus H_3(\widetilde{\mathit{PSL}}(2,\R)_{(2,3)};\Z)
$$
by Proposition \ref{prop:b}.
Hence the difference
$\tilde{\sigma}-\Psi(\partial\tilde{\sigma})$ defines a unique element
in \\$H_3(\widetilde{\mathit{PSL}}(2,\R)_{(2,3,n)};\Z)$ by Equality \eqref{eq:kern}.
This completes the proof.

\end{proof}

\begin{remark}
The above proposition shows that the extra $\Z$-summand in Proposition \ref{prop:z}
does not affect our conclusion. However we will see in the next section $\S 8$ that
this $\Z$-summand does vanish in $H_3(\Gamma_H;\Z)$.
\end{remark}

\vspace{2mm}
\noindent
{\it Proof of Theorem \ref{th:mainb}}\\


Since we have 
$$
H_2(G_T,\mathit{SO}(2);\Z)\cong \bigoplus_{n=1}^\infty H_2(\mathit{PSL}(2,\R)^{(n)},\mathit{SO}(2);\Z),
$$
in order to prove Theorem \ref{th:mainb}, it is now enough to prove the following statement which is a generalization of
Proposition \ref{prop:comp} to the case of $n$-fold covering.
\begin{prop}
For any parameter $s\in\R$, we have the following identity
$$
\Phi_n(s)=n^2 \Phi(s)
$$
where $\Phi(s)$ denotes $\Phi(\sigma)$ such that the parameter of $\sigma$ is equal to $s$ and similarly
for $\Phi_n$.
\label{prop:compn}
\end{prop}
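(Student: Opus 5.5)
We follow the proof of Proposition \ref{prop:comp}, now carried out inside the group $\widetilde{\mathit{PSL}}(2,\R)_{(2,3,n)}$. Both $\Phi(s)$ and $\Phi_n(s)$ will be regarded as classes in $H_3(\widetilde{\mathit{PSL}}(2,\R)_{(2,3,n)};\Z)$. For $\Phi$ this uses the inclusion $\widetilde{\mathit{PSL}}(2,\R)_{(2,3)}\subset \widetilde{\mathit{PSL}}(2,\R)_{(2,3,n)}$, and Corollary \ref{cor:unique} guarantees that the cap $\Psi$ used in the two definitions is the same.

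The first step is to show that $\varphi_n$ carries the construction of $\Phi$ to that of $\Phi_n$, up to a scalar. Let $\sigma\in H_2(\mathit{PSL}(2,\R),\mathit{PSO}(2);\Z)$ have parameter $s$, and let $c$ be a chain in $\widetilde{\mathit{PSL}}(2,\R)$ representing $\tilde\sigma$. Then $\varphi_n(c)$ is a chain in $\widetilde{\mathit{PSL}}(2,\R)^{(n)}$ whose boundary $\varphi_n(\partial c)$ represents $\frac1n\wedge\frac sn=\frac1{n^2}(1\wedge s)$ in $H_2(\R;\Z)$. This is a linear foliation whose translation length, normalized so that the center $\Z$ of $\widetilde{\mathit{PSL}}(2,\R)^{(n)}$ is generated by $T_1$, is read off accordingly. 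The key claim is
$$n^2\,\varphi_n\bigl(\tilde\sigma\bigr)=\widetilde{\sigma_n}\quad\text{in } H_3(\widetilde{\mathit{PSL}}(2,\R)^{(n)},\R;\Z),$$
where $\sigma_n$ denotes the class with parameter $s$. I would prove it by comparing the boundaries and applying the parameter description of Tsuboi's isomorphism for $\mathit{PSL}(2,\R)^{(n)}$ (products of commutators of lifts), or geometrically by taking $n^2$ copies. This is exactly the analogue of the computation $\varphi_2(1\wedge s)=\frac14\,1\wedge s$ in Proposition \ref{prop:comp}.

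Next, $\varphi_n$ is conjugation by $S_n^{-1}$. It therefore preserves $\R$ and $\R\rtimes Z_{(2,3,n)}$, and it induces an automorphism of $H_3(\R\rtimes Z_{(2,3,n)},\R;\Z)$ that covers $\varphi_n$ on $H_2(\R;\Z)$. Consequently $\varphi_n\circ\Psi\circ\varphi_n^{-1}$ is again a right inverse of $\partial$, and by Corollary \ref{cor:unique} it equals $\Psi$. This gives
$$\varphi_n\bigl(\tilde\sigma-\Psi(\partial\tilde\sigma)\bigr)=\varphi_n(\tilde\sigma)-\Psi(\partial\varphi_n\tilde\sigma).$$
Multiplying by $n^2$ and using linearity of $\Psi$ together with the key claim yields $n^2\varphi_n(\Phi(s))=\Phi_n(s)$, with the well-definedness of Proposition \ref{prop:Phin} making the difference class unambiguous.

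Finally, $\varphi_n$ is an inner automorphism of $\widetilde{\mathit{PSL}}(2,\R)_{(2,3,n)}$, since $S_n$ lies in that group. By Proposition \ref{prop:H} it therefore acts trivially on homology, so $\varphi_n(\Phi(s))=\Phi(s)$ and hence $\Phi_n(s)=n^2\Phi(s)$. Pushing forward to $H_3(\Gamma_H;\Z)$ gives the statement. The main obstacle is the key claim in the first step: one must identify precisely the normalization of Tsuboi's parameter for $\mathit{PSL}(2,\R)^{(n)}$ relative to the rescaling by $\varphi_n$, in particular whether the center is generated by $T_1$ or by $T_{1/n}$, so as to obtain the factor $n^2$ rather than some other power. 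I would settle this first by checking $\partial$ against $\wedge^2_\Z\R$, and then use the injectivity of $\partial$ on the relevant uniquely divisible parts to upgrade the equality of boundaries to an equality of relative classes.
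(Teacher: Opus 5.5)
Your three steps are the paper's proof. The paper also derives $n^2\varphi_n(\Phi(s))=\Phi_n(s)$ from $\varphi_n(1\wedge s)=\frac{1}{n^2}\,1\wedge s$. It uses Corollary \ref{cor:unique} to see that $\varphi_n$ does not disturb the cap; your remark that $\varphi_n\circ\Psi\circ\varphi_n^{-1}$ is again a right inverse of $\partial$ states this precisely. It then finishes with the innerness of $\varphi_n$ on $\widetilde{\mathit{PSL}}(2,\R)_{(2,3,n)}$.

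One step would not work as you plan it: proving the key claim $n^2\varphi_n(\tilde\sigma)=\widetilde{\sigma_n}$ by matching boundaries and then invoking injectivity of $\partial$. No such injectivity holds. On the image of the relative Gysin map, $\partial$ is $t\mapsto 1\wedge t\in\wedge^2_\Z\R=\wedge^2_\Q\R$, which depends only on $t$ modulo $\Q$. So it cannot distinguish $\tilde\sigma_t$ from $\tilde\sigma_{t+1}$, although these differ by the Gysin image of a closed-surface class and have different Godbillon--Vey invariants. (The GV cocycle vanishes on the affine group, so it evaluates on relative classes.) The claim therefore has to be proved at the level of representations, and doing so also settles your normalization question.

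Realize $\sigma$ by $\rho\colon\pi_1(\Sigma^0)\to\mathit{PSL}(2,\R)$ with a lift $\tilde\rho$ sending the boundary loop to $T_s$. Then $\tilde\sigma$ is the push-forward of the relative fundamental class of $\Sigma^0\times S^1$ under $(g,k)\mapsto\tilde\rho(g)T_1^k$. Likewise $\varphi_n(\tilde\sigma)$ is its push-forward under $G(g,k)=\varphi_n\tilde\rho(g)\,T_{1/n}^k$.

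Tsuboi's parameter for $\mathit{PSL}(2,\R)^{(n)}$ is measured against the kernel $\langle T_1\rangle$ of $\widetilde{\mathit{PSL}}(2,\R)^{(n)}\to\mathit{PSL}(2,\R)^{(n)}$, which is the centre of $\widetilde{\mathrm{Diff}}_+^\omega S^1$. It is not measured against the abstract centre $\langle T_{1/n}\rangle=\varphi_n\langle T_1\rangle$ of $\widetilde{\mathit{PSL}}(2,\R)^{(n)}$. Hence $\varphi_n\tilde\rho$ projects to a class $\sigma'$ of parameter $s/n$, since $\varphi_n(T_s)=T_{s/n}$. Its Gysin lift is the push-forward under $F(g,k)=\varphi_n\tilde\rho(g)T_1^k=G(g,nk)$. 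The map $(g,k)\mapsto(g,nk)$ is induced by a degree-$n$ fibrewise self-covering of $\Sigma^0\times S^1$, so $\widetilde{\sigma'}=n\,\varphi_n(\tilde\sigma)$. Additivity of the parameter then gives $\widetilde{\sigma_n}=n\,\widetilde{\sigma'}=n^2\varphi_n(\tilde\sigma)$.

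These are the two factors of $n$ behind your ``$n^2$ copies''. With this in place, the rest of your argument goes through as written.
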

\begin{proof}
Since
$$
\varphi_n(1\wedge s)=\frac{1}{n}\wedge \frac{s}{n}=\frac{1}{n^2} 1\wedge s,
$$
we have
$$
n^2\varphi_n(\Phi(s))=\Phi_n(s).
$$
Here we need Corollary \ref{cor:unique} to show this. Uniqueness guarantees the fact that the
action of $\varphi_n$ on the right is trivial. 
Formerly in the case of $n=2$ (Proposition \ref{prop:comp}),
there is no need of this because in that case the uniqueness of the right inverse was obvious since $\Psi$ was an
isomorphism.
On the other hand, $\varphi_n$ induces an inner automorphism of the target group $\widetilde{\mathit{PSL}}(2,\R)_{(2,3,n)}$.
Therefore $\varphi_n(\Phi(s))=\Phi(s)$. The required result follows from this.
\end{proof}
This completes the proof of Theorem \ref{th:mainb}.
\qed

\section{Uniqueness of our construction within the affine group}

In section $\S 5$, we construct what we call real analytic caps
(see Definition \ref{def:ac}) which are $3$-chains bounding $2$-cycles representing linear foliations
on the torus whose monodromy groups are contained in the group
$\R$ acting on itself by translations.
They play the role of Reeb components
in the real analytic context.
In this section, we show that our construction of real analytic caps
are uniquely defined within the affine group $\mathrm{Aff}^+ (1)=\R\rtimes \R^+$
(see Proposition \ref{prop:unique} below).

We mentioned in Remark \ref{re:af} that K. Rozhe (C. Roger)  \cite{rozhe}
proved that the natural projection $\mathrm{Aff}^+ (n)\rightarrow \mathrm{GL}^+ (n,\R)$
induces an isomorphism
$$
H_*(\mathrm{Aff}^+ (n);\Z)\cong H_*(\mathrm{GL}^+ (n,\R);\Z)
$$
for any $n$.
In particular, the homomorphism
$
H_*(\R^n ;\Z)\rightarrow H_*(\mathrm{Aff}^+ (n);\Z)
$
is trivial except for the degree $0$ part. 
In the case of $n=1$,
the affine group $\mathrm{Aff}^+ (1)=\R\rtimes \R^+$ is a subgroup of
$\mathrm{Diff}_+^\omega \R$ so that it can also be considered as a
subgroup of the Haefliger group $\Gamma_H$.
More precisely, 
$
\R=\{T_s; s\in \R\},\R^+=\{S_r; r\in \R^+\}\subset \mathrm{Diff}_+^\omega \R
$
where $T_s(x)=x+s, S_r(x)=rx \ (x\in\R)$
(see $\S 3$).
The following Proposition exhibits a certain ``flexibility" of the Haefliger group.

\begin{prop}
If we consider the two groups
$
\R=\{T_s; s\in \R\},\R^+=\{S_r; r\in \R^+\}\subset \mathrm{Diff}_+^\omega \R
$
as subgroups of the Haefliger group $\Gamma_H$, then they are
conjugate to each other.
\label{prop:th}
\end{prop}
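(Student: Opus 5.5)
The plan is to realize the conjugation by an explicit real analytic change of coordinate, namely the germ-level diffeomorphism $h=\exp:\R\rightarrow\R^+\subset\R$. The key identity is
$$
\exp(x+\log r)=r\exp(x),\qquad\text{i.e.}\qquad h\circ T_{\log r}\circ h^{-1}=S_r \ \text{on}\ \R^+ .
$$
So $h$ conjugates the translation group $\R=\{T_s\}$, restricted to the open interval $\R^+=(0,\infty)$, isomorphically onto the homothety group $\{S_r\}$, via $s\mapsto S_{e^s}$.

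Since $h$ is not a global element of $\mathrm{Diff}_+^\omega \R$, we cannot conjugate inside $\mathrm{Diff}_+^\omega\R$. We have to use the structure of the Haefliger group instead. The first step is therefore to recall how $\Gamma_H=\pi_1(B\overline{\Gamma}^\omega_1)$ arises: $\overline{\Gamma}^\omega_1$-structures are built from the pseudogroup of real analytic local diffeomorphisms of $\R$. Consequently, for any open interval $J\subset\R$ and any real analytic diffeomorphism $h:J\rightarrow J'$ onto another interval, the subgroups of $\mathrm{Diff}_+^\omega\R$ preserving $J$, resp.\ $J'$, map to $\Gamma_H$ compatibly with conjugation by $h$.

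Concretely, I would use Haefliger's description of $\Gamma_H$. In the real analytic case a diffeomorphism is determined by its germ, so $\mathrm{Diff}_+^\omega J\rightarrow\Gamma_H$ is obtained by choosing a base point in $J$ and a path to the standard base point. The inclusions $\mathrm{Diff}_+^\omega\R\supset \mathrm{Diff}_+^\omega J\subset\mathrm{Diff}_+^\omega J'$ then induce maps to $\Gamma_H$ that agree up to conjugation by the element corresponding to the transport path.

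Applied with $J=\R$, $J'=\R^+$ and $h=\exp$, the map $\R\rightarrow\Gamma_H$, $T_s\mapsto[T_s]$, is conjugate to $T_s\mapsto[hT_sh^{-1}]=[S_{e^s}|_{\R^+}]$. Since $S_r$ is a global analytic diffeomorphism of $\R$ preserving $\R^+$, and analytic continuation identifies $S_r|_{\R^+}$ with $S_r$ (the same holonomy class), the latter is the image of $\R^+\subset\mathrm{Diff}_+^\omega\R\subset\Gamma_H$. It follows that $\R$ and $\R^+$ are conjugate in $\Gamma_H$ by the class of $h$ together with the base-point transport.

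The main obstacle is making precise why a diffeomorphism between proper open intervals, together with a change of base point, yields an honest element of $\Gamma_H$ realizing the conjugation. Equivalently, one must show that the embedding $\mathrm{Diff}_+^\omega\R\rightarrow\Gamma_H$ is, up to inner automorphisms, independent of the analytic chart on an invariant interval. I would first try to do this geometrically: build a foliated (Haefliger) structure on a torus-like $2$-complex that glues the action of $\R$ on $\R$ to the action of $\R^+$ on $(0,\infty)$ via $\exp$, and check that its holonomy realizes the conjugacy. Uniqueness of analytic continuation is what makes germ holonomy well defined here, and this is exactly where real analyticity, rather than mere smoothness, is used.
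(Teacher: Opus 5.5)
Your proposal is correct and follows essentially the paper's route. The paper also conjugates by $\exp$: it notes that $T_s$ becomes the homothety $x\mapsto e^s x$ on $(0,\infty)$, which analytically continues to a global homothety. It then cites Haefliger's and Jekel's germ description of $\Gamma_H$ to regard $\exp$ as an element of $\Gamma_H$ realizing the conjugacy. Your base-point and path bookkeeping, with germs moved by analytic continuation, is a more explicit version of that citation.
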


\begin{proof}
Consider the exponential mapping
$$
\exp: \R\rightarrow (0,\infty)
$$
which is a real analytic isomorphism. Under this isomorphism, the transformations of
$T_s (\text{parallel transformation}), S_r (\text{homotheties}) \in \R\rtimes\R^+$ 
acting on the real line $\R$ on the left hand side are transferred to actions
on the righthand side $(0,\infty)$ as follows.
\begin{align*}
&(0,\infty)\ni x \overset{\log}{\rightarrow}\log x \overset{T_s}{\rightarrow} \log x+s
\overset{\exp}{\rightarrow} x\cdot e^s=R_{e^s}(x)\\
&(0,\infty)\ni x \overset{\log}{\rightarrow}\log x \overset{S_r}{\rightarrow} r\log x=x^r
\overset{\exp}{\rightarrow} x^r
\end{align*}
Thus $T_s$ is transferred to the homothety $R_{e^s}$. Here the action of $R_{e^s}$ is 
defined only on $(0,\infty)$. However, it is analytically continued to the whole real line 
as a global homothety there. In view of the descriptions of elements of $\Gamma_H$
in \cite{haefliger}\cite{jekel}, we see that $T_s$ is conjugate to $R_{e^s}$ by the 
mapping $\exp$ considered as an element of $\Gamma_H$. Since this holds for any $s$,
this finishes the proof.
\end{proof}

\begin{prop}
Let
$$
i: \R\rtimes\R^+\subset \mathrm{Diff}_+^\omega\R\subset \pi_1(B\overline{\Gamma}^\omega_1)=\Gamma_H
$$
be the natural inclusion. Then the induced homomorphism
$$
i_*: H_*(\R\rtimes\R^+;\Z)\rightarrow H_*(B\overline{\Gamma}^\omega_1;\Z)
$$
is trivial in positive degrees. 
\label{prop:ac}
\end{prop}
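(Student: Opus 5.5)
By Rozhe's theorem (Remark \ref{re:af}), the projection $\R\rtimes\R^+\to\R^+$ induces an isomorphism $H_*(\R\rtimes\R^+;\Z)\cong H_*(\R^+;\Z)$. The splitting $s:\R^+\to\R\rtimes\R^+$, $r\mapsto S_r$, is a section of this projection, so $s_*$ is an inverse isomorphism. Hence every class of $H_*(\R\rtimes\R^+;\Z)$ in positive degree is the image of a class of the homothety subgroup $\R^+=\{S_r\}$. It therefore suffices to show that the composite
$$
H_*(\R^+;\Z)\overset{s_*}{\rightarrow} H_*(\R\rtimes\R^+;\Z)\overset{i_*}{\rightarrow} H_*(\Gamma_H;\Z)
$$
vanishes in positive degrees.

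For this we use Proposition \ref{prop:th}. Inside $\Gamma_H$ the homothety group $\R^+$ is conjugate to the translation group $\R=\{T_s\}$, via the element of $\Gamma_H$ given by $\exp$: one has $T_s=\exp^{-1}R_{e^s}\exp$. Inner automorphisms act trivially on group homology. So the image of $H_*(\R^+;\Z)$ in $H_*(\Gamma_H;\Z)$ equals the image of $H_*(\R;\Z)$ under the inclusion of the translation subgroup, with the identification $e^s\leftrightarrow s$.

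Rozhe's theorem also says that $H_*(\R;\Z)\to H_*(\R\rtimes\R^+;\Z)$ is zero in positive degrees. This is the statement quoted just before Proposition \ref{prop:th}, and in low degrees it also follows from the $n^k-1$ coprimality argument of Proposition \ref{prop:b}. Since the translation subgroup sits in $\Gamma_H$ through $\R\rtimes\R^+$, its image in $H_*(\Gamma_H;\Z)$ vanishes in positive degrees. Consequently the image of $H_*(\R^+;\Z)$ vanishes, and by the first paragraph so does $i_*$ in positive degrees.

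The step needing most care is the use of Proposition \ref{prop:th}. The conjugating element $\exp$ is not a global diffeomorphism of $\R$: it lives in $\Gamma_H$ only through Haefliger's description of the fundamental group as germs along paths. We must check that conjugation by it maps the subgroup $\{T_s\}$ isomorphically onto the subgroup $\{S_r\}$, as abstract subgroups of $\Gamma_H$ with the expected identification $s\mapsto e^s$. Proposition \ref{prop:th} is stated exactly in this form, so we take it as given, and the rest is the formal argument above.
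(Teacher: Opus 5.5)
Your proposal is correct and follows the paper's proof essentially step for step. Rozhe's isomorphism reduces the question to the homothety subgroup $\R^+$. Proposition \ref{prop:th} then moves this to the translation subgroup $\R$, and Rozhe's theorem already kills $H_*(\R;\Z)$ in positive degrees inside $\R\rtimes\R^+$. Your observation that the section $r\mapsto S_r$ induces the inverse isomorphism makes explicit a step the paper leaves implicit.
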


\begin{proof}
According to a result of Rozhe cited above, the natural projection
$\R\rtimes\R^+\rightarrow \R^+$ induces
an isomorphism $H_*(\R\rtimes\R^+;\Z)\cong H_*(\R^+;\Z)$.
Therefore it is enough to prove that the homomorphism
\begin{equation}
H_*(\R^+;\Z)\rightarrow H_*(\Gamma_H;\Z)
\label{eq:r+}
\end{equation}
induced by the inclusion $\R^+\subset \Gamma_H$ is trivial in positive degrees.
By Proposition \ref{prop:th}, the subgroup $\R^+\subset \Gamma_H$ is conjugate to the
subgroup $\R\subset \Gamma_H$. Hence the homomorphism \eqref{eq:r+}
is trivial in positive degrees 
if and only if the same is true for the subgroup $\R\subset \Gamma_H$.
Rozhe's result above implies that $H_*(\R;\Z)$ vanishes, in positive degrees,
already on $H_*(\R\rtimes\R^+;\Z)$ so that on $H_*(\Gamma_H;\Z)$ as well. This completes the proof.
\end{proof}

\begin{question}
Obviously the above Proposition \ref{prop:th} does not hold if we replace $\Gamma_H$ with
$\mathrm{Diff}_+^{\omega} \R$. However, it seems reasonable to conjecture that
the natural homomorphism
$$
H_*(\R\rtimes\R^+;\Z)\cong H_*(\R^+;\Z)\rightarrow H_*(\mathrm{Diff}_+^{\omega} \R;\Z)
$$
is trivial in positive degrees. Is this correct? We mention here that if we replace 
$H_*(\mathrm{Diff}_+^{\omega} \R;\Z)$ with $H_*(\mathrm{Diff}_+^{\infty} \R;\Z)$
in the above homomorphism,
then the above statement holds. This follows by combining Proposition \ref{prop:ac}
with a theorem of Segal which says
$H_*(\mathrm{Diff}_+^{\infty} \R;\Z)\cong H_*(B\overline{\Gamma}_1^\infty;\Z)$
(\cite{segal}).
\end{question}

\begin{question}
It seems reasonable to conjecture 
that the natural mapping
$$
B(\R\rtimes\R^+)\rightarrow B\overline{\Gamma}_1^\infty
$$
is homotopic to a constant map. Is this correct? Here recall that Tsuboi \cite{tsuboi84b} proved that
the natural map $B\R^+\rightarrow B\overline{\Gamma}_1^\infty$ is homotopic to a constant map
(this is a special case of his more general result).
\end{question}

With above preparations in mind, we proceed to state the main result of this section.
Our real analytic caps was a chain-level realization of the homomorphism
$$
\Psi:H_2(\R;\Z)\overset{\cong}{\rightarrow} H_3(\R\rtimes Z_{(2,3)},\R;\Z)
$$
which is the inverse of the boundary homomorphism 
$\partial:H_3(\R\rtimes Z_{(2,3)},\R;\Z)\overset{\cong}{\rightarrow} H_2(\R;\Z)$.
We consider what will happen if we replace $Z_{(2,3)}$ with the group $\R^+$
of all the homotheties. 

\begin{prop}
The long exact sequence of the pair $(\R\rtimes \R^+,\R)$ gives rise to
the following short exact sequence
$$
0 \rightarrow H_3(\R\rtimes \R^+;\Z)\cong H_3(\R^+;\Z)\rightarrow
H_3(\R\rtimes \R^+,\R;\Z)
\overset{\partial}{\rightarrow} H_2(\R;\Z)\rightarrow 0.
$$
\label{prop:afd}
\end{prop}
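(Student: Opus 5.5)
The plan is to write down the long exact sequence of the pair $(\R\rtimes\R^+,\R)$,
$$
H_3(\R;\Z)\rightarrow H_3(\R\rtimes\R^+;\Z)\rightarrow H_3(\R\rtimes\R^+,\R;\Z)\overset{\partial}{\rightarrow} H_2(\R;\Z)\rightarrow H_2(\R\rtimes\R^+;\Z),
$$
and show that both outer maps, induced by the inclusion $\R\subset\R\rtimes\R^+$, vanish. Two routes are available, and I would present both.

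\textbf{Route 1 (Rozhe).} By Rozhe's theorem (Remark \ref{re:af}), the projection $\R\rtimes\R^+\rightarrow\R^+$ induces an isomorphism $H_*(\R\rtimes\R^+;\Z)\cong H_*(\R^+;\Z)$. Composing the inclusion $\R\rightarrow\R\rtimes\R^+$ with this projection gives the trivial homomorphism $\R\rightarrow\R^+$. Hence $H_k(\R;\Z)\rightarrow H_k(\R\rtimes\R^+;\Z)$ is zero for all $k\geq1$, and in particular for $k=2,3$. The identification $H_3(\R\rtimes\R^+;\Z)\cong H_3(\R^+;\Z)$ in the statement is exactly this isomorphism.

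\textbf{Route 2 (from \S5).} This route uses only earlier results of the paper. Since $\R\subset\R\rtimes Z_{(2,3)}\subset\R\rtimes\R^+$, the maps $H_k(\R;\Z)\rightarrow H_k(\R\rtimes\R^+;\Z)$ for $k=2,3$ factor through $H_k(\R\rtimes Z_{(2,3)};\Z)$. In the proof of Proposition \ref{prop:b} these maps were shown to vanish: conjugation by $S_2$ and $S_3$ acts on $H_k(\R;\Z)\cong\wedge^k_\Z\R$ by $2^k$ and $3^k$, and both $\gcd(3,8)$ and $\gcd(7,26)$ equal $1$. So the maps into $H_k(\R\rtimes\R^+;\Z)$ vanish as well. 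This gives exactness with no appeal to Rozhe except for the identification $H_3(\R\rtimes\R^+;\Z)\cong H_3(\R^+;\Z)$.

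With either route, exactness collapses the sequence to the stated short exact sequence. Surjectivity of $\partial$ holds because the next map is zero, and injectivity on the left holds because the previous map is zero.

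There is no serious obstacle. The only point needing care is the identification $H_3(\R\rtimes\R^+;\Z)\cong H_3(\R^+;\Z)$, which I would take directly from Rozhe. As a remark I would add that the sequence splits canonically. The chain-level cap $C$ of Proposition \ref{prop:psi} already lies in $C_3(\R\rtimes Z_{(2,3)})\subset C_3(\R\rtimes\R^+)$, so it gives a right inverse of $\partial$. This right inverse is unique, in the sense of Corollary \ref{cor:unique}, only if $\Hom(H_2(\R;\Z),H_3(\R^+;\Z))=0$. That vanishing fails in general, since $H_3(\R^+;\Z)\cong\wedge^3_\Z\R$ is itself a $\Q$-vector space. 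This is presumably what the subsequent uniqueness discussion addresses via Proposition \ref{prop:ac}.
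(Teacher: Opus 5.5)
Your proposal is correct and matches the paper's proof. The paper likewise combines Rozhe's isomorphism $H_*(\R\rtimes\R^+;\Z)\cong H_*(\R^+;\Z)$ with the vanishing of $H_k(\R;\Z)\rightarrow H_k(\R\rtimes\R^+;\Z)$ for $k=2,3$ shown explicitly in \S 5; your closing remark on the many right inverses of $\partial$ is exactly what the paper takes up next, settling it in $H_3(\Gamma_H;\Z)$ (Proposition~\ref{prop:unique}).
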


\begin{proof}
This follows from the result of Rozhe (Roger) cited above.
The fact that the homomorphism
$H_k(\R;\Z)\rightarrow H_k(\R\rtimes \R^+;\Z)$ is trivial for $k=2,3$ was
proved in $\S 5$ explicitly.
\end{proof}

Since both of the groups $H_3(\R^+;\Z)$ and $H_2(\R;\Z)$ are $\Q$ vector spaces,
$H_3(\R\rtimes \R^+,\R;\Z)$ is also a $\Q$ vector space.
Therefore the boundary operator $\partial:H_3(\R\rtimes \R^+,\R;\Z)
\rightarrow H_2(\R;\Z)$ has a huge amount of right inverses among which our homomorphism
$\Psi:H_2(\R;\Z)\rightarrow H_3(\R\rtimes Z_{(2,3)},\R;\Z)\subset H_3(\R\rtimes \R^+,\R;\Z)$
is the simplest one. Associated to any choice $\Psi':H_2(\R;\Z)\rightarrow H_3(\R\rtimes \R^+,\R;\Z)
$ of right inverse to this boundary operator $\partial$,
we can define an analytic cap and then the homomorphisms
$$
\Phi'_n: H_2(\mathit{PSL}(2,\R)^{(n)},\mathit{SO}(2);\Z)\cong \R \rightarrow H_3(\widetilde{\mathit{PSL}}(2,\R)_{(2,3,n)};\Z)\rightarrow H_3(\Gamma_H;\Z)\quad
$$
for any $n=1,2,\ldots$.
However, we have the following uniqueness result.

\begin{prop}
If we consider the targets of our homomorphisms 
$$
\Phi_n: H_2(\mathit{PSL}(2,\R)^{(n)},\mathit{SO}(2);\Z)\cong \R 
\rightarrow H_3(\Gamma_H;\Z)\quad
(n=1,2,\ldots)
$$
to be $H_3(\Gamma_H;\Z)$ rather than $H_3(\widetilde{\mathit{PSL}}(2,\R)_{(2,3,n)};\Z)$, then they
are independent of choices of the right inverse to the boundary operator so that it is uniquely defined.
\label{prop:unique}
\end{prop}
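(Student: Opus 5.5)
Two choices of right inverse to $\partial$ differ by an element of $H_3(\R\rtimes\R^+;\Z)$, and by Proposition \ref{prop:ac} this group dies in $H_3(\Gamma_H;\Z)$; the plan is to turn this into a chain-level statement.

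\emph{Reduction.} Let $\Psi'\colon H_2(\R;\Z)\to H_3(\R\rtimes\R^+,\R;\Z)$ be any right inverse to $\partial$. By Proposition \ref{prop:afd}, $\partial\circ(\Psi'-\Psi)=0$. Hence $\Psi'-\Psi$ takes values in the image of the injection $H_3(\R\rtimes\R^+;\Z)\to H_3(\R\rtimes\R^+,\R;\Z)$. So for each $x\in H_2(\R;\Z)$ there is a unique absolute class $\delta(x)\in H_3(\R\rtimes\R^+;\Z)$ with $\Psi'(x)=\Psi(x)+j(\delta(x))$.

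\emph{Chain level.} Let $\sigma$ be a relative class and $c$ a chain representing $\tilde\sigma$. Let $d$ and $d'$ be chains in $\R\rtimes\R^+$ representing $\Psi(\partial\tilde\sigma)$ and $\Psi'(\partial\tilde\sigma)$. After subdivision as in the proof of Proposition \ref{prop:Phi}, we may assume $\partial d=\partial d'=\partial c$. Then
\[
(c-d')-(c-d)=d-d',
\]
and $d-d'$ is an honest $3$-cycle of $\R\rtimes\R^+$ representing $-\delta(\partial\tilde\sigma)$. The check needed here is that $d-d'$ represents exactly this class up to sign, which follows from the injectivity in Proposition \ref{prop:afd}. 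Its image in the group generated by $\widetilde{\mathit{PSL}}(2,\R)^{(n)}$ and $\R\rtimes\R^+$, pushed into $\Gamma_H$, is therefore $i_*$ of a class in $H_3(\R\rtimes\R^+;\Z)$. By Proposition \ref{prop:ac}, $i_*$ vanishes in positive degrees. Hence $[c-d']=[c-d]$ in $H_3(\Gamma_H;\Z)$, so $\Phi'_n(\sigma)=\Phi_n(\sigma)$ for every $n$.

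\emph{Well-definedness of $\Phi'_n$.} We must also check that $[c-d']\in H_3(\Gamma_H;\Z)$ does not depend on the choices of $c$ and $d'$. Changing $d'$ within its relative class changes it by a boundary plus a chain in $C_3(\R)$. Since $\partial$ is fixed, the $C_3(\R)$-part is a cycle of $\R$, and $H_3(\R;\Z)$ maps to zero already in $H_3(\R\rtimes Z_{(2,3)};\Z)$ (proof of Proposition \ref{prop:b}). Changing $c$ is handled exactly as in Proposition \ref{prop:Phin}. Alternatively, independence follows at once from the comparison above, since the difference from the $\Psi$-version is always killed in $\Gamma_H$.

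\emph{Main obstacle.} The delicate point is the passage between relative classes and actual cycles with matched boundaries. The difference $d-d'$ must be a genuine cycle of $\R\rtimes\R^+$, which requires the subdivision/matching step. Once boundaries are matched, everything reduces to the vanishing of $H_3(\R\rtimes\R^+;\Z)\to H_3(\Gamma_H;\Z)$, that is, Proposition \ref{prop:ac}, which rests on Rozhe's theorem together with the conjugacy in Proposition \ref{prop:th}.
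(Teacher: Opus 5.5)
Your proposal is correct and follows the paper's proof: two right inverses differ by a homomorphism into $H_3(\R\rtimes\R^+;\Z)$, and this group dies in $H_3(\Gamma_H;\Z)$ by Proposition \ref{prop:ac}. Your chain-level matching $\partial d=\partial d'=\partial c$ spells out what the paper leaves implicit. Identifying $[d-d']$ with $-\delta(\partial\tilde\sigma)$ is not actually needed, since every class in $H_3(\R\rtimes\R^+;\Z)$ is killed in $H_3(\Gamma_H;\Z)$.
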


\begin{proof}
Let $\Psi,\Psi':H_2(\R;\Z)\rightarrow H_3(\R\rtimes \R^+,\R;\Z)$
be two choices of right inverses of the boundary operator $\partial:H_3(\R\rtimes \R^+,\R;\Z)
\rightarrow H_2(\R;\Z)$. Then their difference defines a homomorphism
$$
\Psi'-\Psi: H_2(\R;\Z)\rightarrow H_3(\R\rtimes \R^+;\Z).
$$
Hence we see that the difference of two choices of associated real analytic caps is measured by the group
$H_3(\R\rtimes \R^+;\Z)$. However by Proposition \ref{prop:ac},
we know that $H_3(\R\rtimes \R^+;\Z)$ vanishes in $H_3(\Gamma_H;\Z)$.
The required result follows from this, completing the proof.
\end{proof}

\section{A new kind of characteristic class of foliations}

In this section, we prove Theorem \ref{th:curious}.

We begin by recalling an old theorem of Wiegold in \cite{wiegold} which was published in the late $1960$'s.

\begin{thm}[Wiegold \cite{wiegold}]
There exists an isomorphism
$$
\mathrm{Ext}(\Q,\Z)\cong \R
$$
as an additive group.
\label{th:wiegold}
\end{thm}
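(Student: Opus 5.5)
The plan is to compute $\mathrm{Ext}(\Q,\Z)$ from a long exact $\mathrm{Ext}$ sequence. We show that it is a $\Q$ vector space of dimension $2^{\aleph_0}$. Then we compare Hamel bases with $\R$, which is also a $\Q$ vector space of dimension $2^{\aleph_0}$. Two $\Q$ vector spaces of the same dimension are isomorphic, so this gives the additive isomorphism. The argument uses the Axiom of Choice, as elsewhere in the paper.

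\textbf{Step 1: reduce to a quotient.} Apply $\Hom(\Q,-)$ to the short exact sequence $0\rightarrow\Z\rightarrow\Q\rightarrow\Q/\Z\rightarrow 0$. This gives
$$
0\rightarrow \Hom(\Q,\Z)\rightarrow \Hom(\Q,\Q)\rightarrow \Hom(\Q,\Q/\Z)\rightarrow \mathrm{Ext}(\Q,\Z)\rightarrow \mathrm{Ext}(\Q,\Q).
$$
We have $\Hom(\Q,\Z)=0$, since $\Q$ is divisible and $\Z$ has no nonzero divisible subgroups. We have $\Hom(\Q,\Q)=\Q$. Finally $\mathrm{Ext}(\Q,\Q)=0$, because $\Q$ is divisible and hence injective. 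Therefore
$$
\mathrm{Ext}(\Q,\Z)\cong \Hom(\Q,\Q/\Z)/\Q .
$$

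\textbf{Step 2: identify $\Hom(\Q,\Q/\Z)$.} Write $\Q=\varinjlim\bigl(\Z\overset{2}{\rightarrow}\Z\overset{3}{\rightarrow}\Z\overset{4}{\rightarrow}\cdots\bigr)$. Then $\Hom(\Q,\Q/\Z)$ is the inverse limit of copies of $\Q/\Z$ along the multiplication maps. Concretely, it consists of compatible sequences $(x_n)_{n\ge1}$ with $x_n\in\Q/\Z$ and $n x_{nm}=x_m$ after reindexing. This group is the ring of finite adeles $\mathbb{A}_f=\widehat{\Z}\otimes\Q$, so
$$
\mathrm{Ext}(\Q,\Z)\cong \mathbb{A}_f/\Q .
$$
The adelic identification is not actually needed. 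All that is used below is that $\Hom(\Q,\Q/\Z)$ is a torsion-free divisible group of cardinality $2^{\aleph_0}$.

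\textbf{Step 3: the quotient is a $\Q$ vector space.} The quotient $\Hom(\Q,\Q/\Z)/\Q$ is divisible, being a quotient of a divisible group. For torsion-freeness, suppose $nf\in\Q$, say $nf=q$. Then $f=q/n$, because $\Hom(\Q,\Q/\Z)$ is uniquely divisible. So $f$ already lies in $\Q$. Hence $\mathrm{Ext}(\Q,\Z)$ is torsion-free and divisible, i.e.\ a $\Q$ vector space.

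\textbf{Step 4: count, which is the main point to check carefully.} A homomorphism $\Q\rightarrow\Q/\Z$ is determined by the countably many values $f(1/n!)$. Hence $|\Hom(\Q,\Q/\Z)|\le\aleph_0^{\aleph_0}=2^{\aleph_0}$.

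For the reverse inequality, fix a prime $p$. Each $p$-adic integer gives a distinct homomorphism $\Z[1/p]\rightarrow\Q_p/\Z_p\subset\Q/\Z$. Each such homomorphism extends to $\Q$, since $\Q/\Z$ is injective. So $\Hom(\Q,\Q/\Z)$ has cardinality at least $|\Z_p|=2^{\aleph_0}$.

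Quotienting by the countable subgroup $\Q$ keeps the cardinality $2^{\aleph_0}$. A $\Q$ vector space of cardinality $2^{\aleph_0}$ has Hamel dimension $2^{\aleph_0}$. This is exactly the dimension of $\R$ over $\Q$, which completes the proof.
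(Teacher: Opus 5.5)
Your proof is correct, but it reaches the key quotient by the route dual to the one the paper indicates.

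The paper does not reprove the statement. It cites Wiegold and sketches the reduction through the Gysin sequence of the central extension $0\rightarrow\Z\rightarrow\Q\rightarrow\Q/\Z\rightarrow 1$. Via the universal coefficient theorem, this is the long exact sequence obtained by applying $\Hom(-,\Z)$. It gives $0\rightarrow\Z\rightarrow\mathrm{Ext}(\Q/\Z,\Z)\cong\hat{\Z}\rightarrow\mathrm{Ext}(\Q,\Z)\rightarrow 0$, so $\mathrm{Ext}(\Q,\Z)\cong\hat{\Z}/\Z$, with the subgroup $\Z$ generated by the Euler class of the extension.

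You instead apply $\Hom(\Q,-)$ and use injectivity of $\Q$, obtaining $\Hom(\Q,\Q/\Z)/\Q\cong\mathbb{A}_f/\Q$. The two answers agree, since $\mathbb{A}_f=\Q+\hat{\Z}$ and $\Q\cap\hat{\Z}=\Z$. From either one, the proof finishes with the same Hamel-dimension count.

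Your version is more self-contained. It does not need $\mathrm{Ext}(\Q/\Z,\Z)\cong\hat{\Z}$. Unique divisibility also comes for free, because $\Hom(\Q,A)$ is a $\Q$ vector space for every $A$ via $(qf)(x)=f(qx)$. For $\hat{\Z}/\Z$ one must instead check $\hat{\Z}=\Z+n\hat{\Z}$ and $n\hat{\Z}\cap\Z=n\Z$ by hand. In exchange, the paper's version exhibits the exact sequence $0\rightarrow\Z\rightarrow\hat{\Z}\rightarrow\R\rightarrow 0$ through an Euler class, which matches the Gysin-sequence viewpoint used throughout the paper.

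The only step you leave implicit is the unique divisibility of $\Hom(\Q,\Q/\Z)$ invoked in Step 3. The one-line justification above should be stated in the proof.
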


To be a bit more precise, let us consider
the central extension
$$
0\rightarrow \Z\rightarrow\Q\rightarrow \Q/\Z\rightarrow 1
$$
which yields the Gysin exact sequence
$$
H^1(\Q;\Z)=0\rightarrow H^0(\Q/\Z;\Z)\rightarrow H^2(\Q/\Z;\Z)\rightarrow H^2(\Q;\Z)\rightarrow H^1(\Q/\Z;\Z)=0.
$$
By the universal coefficient theorem, we see that
$$
H^2(\Q/\Z;\Z)\cong \mathrm{Ext}(\Q/\Z,\Z),\quad H^2(\Q;\Z)\cong \mathrm{Ext}(\Q,\Z).
$$
Wiegold showed in the above cited paper that
$$
\mathrm{Ext}(\Q/\Z,\Z)\cong\hat{\Z},\quad \mathrm{Ext}(\Q,\Z)\cong \R
$$
which gives rise to a short exact sequence
$$
0\rightarrow \Z\rightarrow\hat{\Z}\rightarrow \R\rightarrow 0.
$$
We would like to quote the final sentence of the above cited paper:
``I would not have imagined that killing a cyclic subgroup could
have so profound an effect".

\begin{prop}
For any n, we have an isomorphism
$$
H^{n+1}(K(\R,n);\Z)\cong \mathrm{Ext}(\R,\Z)\cong \prod_{\lambda\in\mathcal{H}} \R_\lambda
$$
where the meaning of the symbol $\prod_{\lambda\in\mathcal{H}} \R_\lambda$ is described in
Theorem \ref{th:curious}.
\label{prop:kr}
\end{prop}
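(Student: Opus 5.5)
The plan has two parts. First compute $H^{n+1}(K(\R,n);\Z)$ via the universal coefficient theorem. Then compute $\mathrm{Ext}(\R,\Z)$ from a Hamel basis decomposition and Wiegold's Theorem \ref{th:wiegold}.

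By the Hurewicz theorem, $H_n(K(\R,n);\Z)\cong \R$. The universal coefficient theorem gives
$$
H^{n+1}(K(\R,n);\Z)\cong \Hom(H_{n+1}(K(\R,n);\Z),\Z)\oplus \mathrm{Ext}(H_n(K(\R,n);\Z),\Z).
$$
So it suffices to show that $\Hom(H_{n+1}(K(\R,n);\Z),\Z)=0$.

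For this we would use that $\R$ is a $\Q$ vector space. Write $\R=\varinjlim \Z$ along multiplication maps, or more simply use a Hamel basis, $\R\cong\bigoplus_{\lambda\in\mathcal{H}}\Q$, and $\Q=\varinjlim_k \tfrac{1}{k!}\Z$. Then $K(\R,n)$ is a homotopy colimit of products of $K(\Z,n)$'s, and homology commutes with filtered colimits. The reduced integral homology of $K(\Q,n)$ is a $\Q$ vector space: multiplication by any integer $m$ is a self-equivalence of $K(\Q,n)$, and it induces multiplication by $m$ on $H_n$. On higher homology the cleaner route is Serre class theory. Rationally, $H_*(K(\Q,n);\Z)$ agrees with $H_*(K(\Q,n);\Q)$, because $H_*(K(\Q,n);\Z/p)=0$ in positive degrees. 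That vanishing follows since $\Q\otimes\Z/p=0$ and $\mathrm{Tor}(\Q,\Z/p)=0$ give $H_*(K(\Q,1);\Z/p)=0$ in positive degrees, and one inducts on $n$ with the Serre spectral sequence of the path fibration. By Künneth and the colimit, the same holds for $K(\R,n)$. Hence $H_{n+1}(K(\R,n);\Z)$ is uniquely divisible, and any homomorphism from a divisible group to $\Z$ vanishes. This mod-$p$ acyclicity is the step needing the most care, though it is standard.

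For the Ext computation, write $\R\cong\bigoplus_{\lambda\in\mathcal{H}}\Q_\lambda$. Since $\mathrm{Ext}$ converts direct sums in the first variable into products,
$$
\mathrm{Ext}(\R,\Z)\cong\prod_{\lambda\in\mathcal{H}}\mathrm{Ext}(\Q,\Z)\cong\prod_{\lambda\in\mathcal{H}}\R_\lambda,
$$
using Theorem \ref{th:wiegold} in each factor. This completes the plan. The only real obstacle is the vanishing of $\Hom(H_{n+1},\Z)$; the rest is formal.
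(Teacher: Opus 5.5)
Your proposal is correct, and the $\mathrm{Ext}$ half (Hamel basis, $\mathrm{Ext}$ turning direct sums into products, Wiegold in each factor) is exactly what the paper does. You differ in how you kill $\Hom(H_{n+1}(K(\R,n);\Z),\Z)$. You prove the stronger statement that all of $\tilde H_*(K(\R,n);\Z)$ is uniquely divisible. Your steps are:
\begin{itemize}
\item mod-$p$ acyclicity of $K(\Q,1)$;
\item induction up the path fibrations using the Serre spectral sequence;
\item K\"unneth over $\Z/p$;
\item passage to the filtered colimit over finite subsets of the Hamel basis.
\end{itemize}
The paper instead splits into two cases and never looks beyond degree $n+1$. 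For $n\ge 2$, $K(\R,n)$ is $(n-1)$-connected, so the Hurewicz map $\pi_{n+1}\to H_{n+1}$ is onto, and $H_{n+1}(K(\R,n);\Z)=0$ outright. For $n=1$, $H_2(K(\R,1);\Z)=H_2(\R;\Z)\cong\wedge^2_\Z\R$, which is visibly a $\Q$-vector space. So the step you flag as ``the only real obstacle'' is in fact elementary, and the mod-$p$ machinery is not needed for this proposition. What your route buys is the more general fact that $K(V,n)$ has uniquely divisible reduced integral homology for any $\Q$-vector space $V$. Hence $H^{k}(K(\R,n);\Z)\cong\mathrm{Ext}(H_{k-1}(K(\R,n);\Z),\Z)$ in every degree, at the cost of considerably more machinery. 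One minor slip: the phrase writing $\R$ as a direct limit of copies of $\Z$ along multiplication maps is wrong as stated, since that describes $\Q$, not $\R$; your Hamel-basis replacement fixes it.
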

\begin{proof}
By the universal coefficient theorem for cohomology, we have a short exact sequence
$$
0\rightarrow \mathrm{Ext}(H_n(K(\R,n),\Z),\Z)\rightarrow H^{n+1}(K(\R,n);\Z)\rightarrow
\mathrm{Hom}(H_{n+1}(K(\R,n),\Z);\Z)\rightarrow 0.
$$
Since $H_{n+1}(K(\R,n);\Z)=0$ for any $n>1$ and $H_{2}(K(\R,1);\Z)$ is a $\Q$ vector space,
we have $\mathrm{Hom}(H_{n+1}(K(\R,n),\Z),\Z)=0$ for any $n$. Therefore we can conclude that
$$
H^{n+1}(K(\R,n);\Z)\cong \mathrm{Ext}(H_n(K(\R,n),\Z),\Z) \cong \mathrm{Ext}(\R,\Z).
$$
Now we can write
$$
\R\cong \sum_{\lambda\in\mathcal{H}} \Q_\lambda
$$
as a $\Q$ vector space. Hence
\begin{align*}
\mathrm{Ext}(\R;\Z)&\cong \mathrm{Ext}\left(\sum_{\lambda\in\mathcal{H}} \Q_\lambda,\Z\right)\\
&\cong \prod_{\lambda\in\mathcal{H}}  \mathrm{Ext}(\Q_\lambda,\Z)\quad (\text{by the property of $\mathrm{Ext}$})\\
&\cong \prod_{\lambda\in\mathcal{H}} \R_\lambda\quad (\text{by Theorem \ref{th:wiegold}}).
\end{align*}
This completes the proof.
\end{proof}

\noindent
{\it Proof of Theorem \ref{th:curious}}\\
Let $gv: B\overline{\Gamma}_1^\omega\rightarrow K(\R,3)$ be the classifying map of the
Godbillon-Vey class. Our task is to show that the induced homomorphism
$$
gv^*: H^4(K(\R,3),\Z)\cong \prod_{\lambda\in\mathcal{H}} \R_\lambda\rightarrow 
H^4(B\overline{\Gamma}_1^\omega,\Z)
$$
is injective.

By the universal coefficient theorem for cohomology, we have the following split short exact sequence
$$
0\rightarrow \mathrm{Ext}(H_3(B\overline{\Gamma}_1^\omega,\Z),\Z)\rightarrow H^{4}(B\overline{\Gamma}_1^\omega;\Z)\rightarrow
\mathrm{Hom}(H_{4}(B\overline{\Gamma}_1^\omega,\Z);\Z)\rightarrow 0.
$$

By comparing this with the corresponding short exact sequence for the space $K(\R,3)$, keeping
the above Proposition \ref{prop:kr} in mind, we obtain
the following commutative diagram
\begin{equation}
\begin{CD}
 \mathrm{Ext}(H_3(K(\R,3),\Z),\Z) @>{\cong}>> H^{4}(K(\R,3);\Z) @>>> \mathrm{Hom}(H_{4}(K(\R,3),\Z);\Z)=0\\
 @V{gv^*}VV   @V{gv^*}VV  @VVV\\ 
  \mathrm{Ext}(H_3(B\overline{\Gamma}_1^\omega,\Z),\Z)
   @>{\subset}>> H^{4}(B\overline{\Gamma}_1^\omega;\Z) @>>> \mathrm{Hom}(H_{4}(B\overline{\Gamma}_1^\omega,\Z);\Z).
\end{CD}
\label{eq:cd3}
\end{equation}

Now as was already mentioned in the Introduction, Thurston proved that the homomorphism
$$
gv: H_3(B\overline{\Gamma}_1^\omega;\Z)\rightarrow \R
$$
is surjective. On the other hand, by our main Theorem \ref{th:main}, this homomorphism splits
so that there exists a direct summand $\R\subset H_3(B\overline{\Gamma}_1^\omega;\Z)$ such that
$$
H_3(B\overline{\Gamma}_1^\omega;\Z)= \R\oplus \mathrm{Ker}\, gv=H_3(K(\R,3);\Z)\oplus \mathrm{Ker}\, gv.
$$
By the property of the functor $\mathrm{Ext}$, we have
$$
\mathrm{Ext}(H_3(B\overline{\Gamma}_1^\omega;\Z),\Z)\cong \mathrm{Ext}(H_3(K(\R,3);\Z),\Z)\oplus \mathrm{Ext}(\mathrm{Ker}\, gv,\Z).
$$
If we put this into the above commutative diagram \eqref{eq:cd3}, we obtain the required result.
This completes the proof of the former part of our theorem.

Next we show the following facts to prove the latter claims of our Theorem. 
First, let $X$ be any CW complex such that the homology group $H_3(X;\Z)$ is 
finitely generated. Then for any
continuous map $f: X\rightarrow B\overline{\Gamma}_1^\omega$, the induced homomorphism
$$
f^*: H^4(B\overline{\Gamma}_1^\omega;\Z)\rightarrow H^4(X,\Z)
$$
is trivial on $\mathrm{Im}\, (gv^*: H^4(K(\R,3);\Z)\rightarrow H^4(B\overline{\Gamma}_1^\omega;\Z))$.
To prove this, it is enough to show that for any continuous map $h: X\rightarrow K(\R,3)$, the induced homomorphism
$$
h^*:H^4(K(\R,3);\Z)\rightarrow H^4(X;\Z)
$$
is trivial. By the proof of Proposition \ref{prop:kr}, this is equivalent to show that the homomorphism
$$
h^*:\mathrm{Ext(H_3(K(\R,3);\Z),\Z)}\rightarrow \mathrm{Ext(H_3(X;\Z),\Z)}
$$
is trivial. By the assumption, $H_3(X;\Z)$ is a finitely generated abelian group.
As is well-known, $\mathrm{Ext}(\Z,\Z)=0$ and  $\mathrm{Ext}(\Z/n,\Z)\cong \Z/n$ so that $\mathrm{Ext(H_3(X;\Z),\Z)}$
is a finite abelian group. On the other hand, 
the source group of $h^*$ is $\mathrm{Ext(H_3(K(\R,3);\Z),\Z)}$ which is isomorphic
to $\prod_{\lambda\in\mathcal{H}} \R_\lambda$ essentially by the Theorem of Wiegold \cite{wiegold}.
We can now conclude that the homomorphism $h^*$ is trivial. This completes the proof.

Next assume that there is given a $\overline{\Gamma}_1^\omega$-structure on $X$ and there exists a 
uniquely divisible subgroup $V\subset H_3(X;\Z)$ such that the homomorphism $gv:H_3(X;\Z)\rightarrow \R$
induced by the Godbillon-Vey class is non-trivial on $V$. By the assumption, $V$ is uniquely divisible
so that it is a $\Q$ vector space. Then, since the target of $gv$ is also a $\Q$ vector space, it is easy to see
that $gv|_{V}$ is a $\Q$ linear homomorphism. It follows that $V\cong \left(\mathrm{Ker}\, gv\cap V\right )\oplus W$
for some subgroup $W\subset H_3(X;\Z)$ such that $gv: W\cong \mathrm{Im}\, gv\subset \R$
(here we use the Axiom of Choice if $\dim_\Q V=\infty$).
This then implies that
$$
H_3(X;\Z)= \mathrm{Ker}\, gv\oplus W.
$$
If we use a similar argument as in the former part of this proof by replacing $\R$
with its subspace $\mathrm{Im}\, gv(\cong W) \subset \R$, then we can deduce that
the homomorphism
$$
gv^*: H^4(\mathrm{K(Im}\, gv,3);\Z) \cong \prod_{\lambda\in\mathcal{H}'} \R_\lambda\rightarrow 
H^4(X,\Z)
$$
is injective. Here $\mathcal{H}'$ denotes a Hamel basis for the $\Q$ subspace $\mathrm{Im}\, gv\, (\cong W) \subset \R$.
Since the homomorphism $H^4(K(\R,3);\Z)\rightarrow H^4(\mathrm{K(Im}\, gv,3);\Z)$
induced by the inclusion $\mathrm{Im}\, gv\subset \R$ is surjective,
we can conclude that our characteristic classes detect the subgroup $W\subset H_3(X;\Z)$ on which the Godbillon-Vey class
gives an injection into its target $\R$ as required.

This completes the proof.
\qed

\bibliographystyle{amsplain}

\end{document}